\newtheorem{theorem}{Theorem}[section]
\newtheorem{proposition}[theorem]{Proposition}
\newtheorem{lemma}[theorem]{Lemma}
\newtheorem{corollary}[theorem]{Corollary}
\theoremstyle{definition}
\newtheorem{definition}[theorem]{Definition}
\newtheorem{example}[theorem]{Example}
\theoremstyle{remark}
\newtheorem{remark}[theorem]{Remark}
\newlength{\szer}
\def\A{{\mathbf A}}
\def\Aa{{\mathcal A}}
\def\N{{\mathbf N}}
\def\Pp{{\mathcal P}}
\def\Q{{\mathbf Q}}
\def\Qq{{\mathcal Q}}
\def\R{\mathbf R}
\def\Z{{\mathbf Z}}
\def\Z{\mathbf Z}
\def\int{\mathrm{int}}
\def\elem(#1,#2){  \{ \frac{#1}{\overline{\ #2\ }}\}  }
\begin{document}
\bibliographystyle{amsplain}
\setcounter{tocdepth}{2}
\title{EQUATIONS FOR FORMAL TORIC DEGENERATIONS}
\author{Bernard Teissier}
\address{Institut de Math\'ematiques de Jussieu - Paris Rive Gauche, UMR 7586 du CNRS, 
B\^atiment Sophie Germain, Case 7012,
75205 PARIS Cedex 13, France}

\email{bernard.teissier@imj-prg.fr}

\keywords{Toric geometry, Valuations, toric degeneration, local uniformization}

\pagestyle{myheadings}
\markboth{\rm Bernard Teissier}{\rm Toric degeneration}
\renewcommand\rightmark{B. Teissier}
\renewcommand\leftmark{Equations for formal Toric degenerations}

\subjclass[2000]{14M25, 14E15, 14B05}

\dedicatory{ \textit {}}
\maketitle
\begin{abstract}
Let $R$ be a complete equicharacteristic noetherian local domain and $\nu$ a valuation of its field of fractions whose valuation ring dominates $R$ with trivial residue field extension. The semigroup of values of $\nu$ on $R\setminus \{0\}$ is not finitely generated in general. We produce equations in an appropriate generalized power series ring for the algebra encoding the degeneration of $R$ to the toric graded algebra  ${\rm gr}_\nu R$ associated to the filtration defined by $\nu$. We apply this to represent $\nu$ as the limit of a sequence of Abhyankar semivaluations (valuations on quotients) of $R$ with finitely generated semigroups. \end{abstract}
\maketitle
\tableofcontents

\begin{section}{Introduction}
For us a valuation is an inclusion of $R$ in a valuation ring $(R_\nu,m_\nu)$ of its field of fractions $K$. If the valuation is centered at $m$, which means that $m_\nu\cap R=m$, the dimension of the valuation is defined to be the transcendence degree of $R_\nu/m_\nu$ over $R/m$. A local uniformization of the valuation $\nu$ on $R$ is a regular local ring $R'$ which is essentially of finite type over $R$ and dominated by $R_\nu$.\par
We continue here the program, presented in \cite{Te1}, \cite{Te3} and \cite{Te5}, to prove local uniformization for valuations on equicharacteristic excellent local domains $(R,m)$ with an algebraically closed residue field. Here is its structure:
\begin{enumerate}
\item Reduce the general case to the case of \textit{rational} valuations, those with trivial residue field extension: $R_\nu/m_\nu=R/m$;
\item Reduce the case of rational valuations to the case where $R$ is complete:
\item Assuming that $R$ is complete, reduce the case of a rational valuation to that of Abhyankar valuations (rational rank=${\rm dim}R$);  
\item Assuming that $R$ is complete, prove the uniformization theorem for Abhyankar valuations using embedded resolution of affine toric varieties, which is blind to the characteristic, applied to the toric graded ring ${\rm gr}_\nu R$ associated to the filtration defined by the valuation.
\end{enumerate}
Step 1) was dealt with in \cite[Section 3.6]{Te1}. Step 2), except in the cases of rank one or Abhyankar valuations, is waiting for the proof of a conjecture about the extension of valuations of $R$ to its completion  (see \cite[*Proposition 5.19*]{Te1}, \cite[Conjecture 1.2]{HOST2} and \cite[Problem B]{Te5}), which is in progress. Step 4) was made in \cite{Te3}. The essential remaining hurdle is step 3).\par The main result of this paper is the production of a sequence of Abhyankar semivaluations $\nu_B$ of $R$ approximating more and more closely our rational valuation $\nu$ in an embedded manner. We believe that they have the property that appropriate toric embedded uniformizations as in \cite{Te3} of the $\nu_B$ close enough to $\nu$ will also uniformize $\nu$. Since we think that the approximation result is of independent interest we make this paper available without waiting for the final proof.
\par\medskip
A valuation $\nu$ on a subring $R$ of $K$ determines a filtration of $R$ by the ideals $$\Pp_\phi(R)=\{x\in R/\nu(x)\geq \phi\}\  {\rm and}\ \  \Pp^+_\phi (R)=\{x\in R/\nu(x)> \phi\}.$$
We note that this is not in general a filtration in the usual sense because the totally ordered value group $\Phi$ of $\nu$ may not be well ordered so that an element of $\Phi$ may not have a successor. However, the successor ideal  $ \Pp^+_\phi (R)$ is well defined and if $\phi$ has a successor $\phi^+$ we have $\Pp^+_\phi(R)=\Pp_{\phi^+}(R)$.\par\noindent The graded ring ${\rm gr}_\nu R=\bigoplus_{\phi\in \Phi}\Pp_\phi(R)/\Pp^+_\phi(R)$ associated to the $\nu$-filtration on $R$ is the graded $k$-subalgebra of ${\rm gr}_\nu K$ whose non zero homogeneous elements have degree in the sub-semigroup\footnote{I follow a tradition of calling semigroup of values of a valuation what is actually a monoid.} $\Gamma=\nu(R\setminus\{0\})\cup\{0\}$ of $\Phi$. In this text we shall deal with subrings $R$ of the valuation ring $R_\nu$, so that ${\rm gr}_\nu R$ is a graded subalgebra of ${\rm gr}_\nu R_\nu$ since by definition $\Pp_\phi(R)=\Pp_\phi(R_\nu)\cap R$, and $\Gamma$ is contained in the non-negative part $\Phi_{\geq 0}$ of $\Phi$.\par In view of the defining properties of valuation rings, for all $\phi\in\Phi$ any two elements of $\Pp_\phi(R_\nu)\setminus \Pp^+_\phi(R_\nu)$ differ by multiplication by a unit of $R_\nu$ so that their images in $\Pp_\phi(R)/\Pp^+_\phi(R)$ differ by multiplication by a non zero element of $k_\nu=R_\nu/m_\nu$. Thus, the homogeneous components of ${\rm gr}_\nu R_\nu$ are all one dimensional vector spaces over $k_\nu$.\par \textit{If the valuation $\nu$ is rational}, the same is true of the non zero homogeneous components of ${\rm gr}_\nu R$ as $k=R/m$ vector spaces. Then one has a toric description of ${\rm gr}_\nu R$ as follows:\par\noindent
Since $R$ is noetherian the semigroup $\Gamma$ is well ordered and so has a minimal system of generators $\Gamma=\langle \gamma_1,\ldots ,\gamma_i,\ldots\rangle$, where $\gamma_{i+1}$ is the smallest non zero element of $\Gamma$ which is not in the semigroup $\langle\gamma_1,\ldots, \gamma_i\rangle$. generated by its predecessors. We emphasize here that the $\gamma_i$ are indexed by an ordinal $I\leq \omega^h$, where $h$ is the rank (or height) of the valuation. The graded $k$-algebra ${\rm gr}_\nu R$ is then generated by homogeneous elements $(\overline\xi_i)_{i\in I}$ with ${\rm deg}\overline\xi_i=\gamma_i$ and we have a surjective map of graded $k$-algebras
$$k[(U_i)_{i\in I}]\longrightarrow {\rm gr}_\nu R,\ \ \ U_i\mapsto \overline\xi_i,$$ 
where $k[(U_i)_{i\in I}]$ is the polynomial ring in variables indexed by $I$, graded by giving $U_i$ the degree $\gamma_i$.\par\noindent Its kernel  is generated by binomials $(U^{m^\ell}-\lambda_\ell U^{n^\ell})_{\ell\in L},\ \lambda_\ell\in k^*$, where $U^m$ represents a monomial in the $U_i$'s. These binomials correspond to a generating system of relations between the generators $\gamma_i$ of the semigroup, and for all practical purposes we can think of ${\rm gr}_\nu R$ as the semigroup algebra over $k$ of the semigroup $\Gamma$. This is asserted in \cite[Proposition 4.7]{Te1} and made more precise in subsection \ref{gral} below .\textit{We assume that the set $(U^{m^\ell}-\lambda_\ell U^{n^\ell})_{\ell\in L}$ is such that none of the binomials is in the ideal generated by the others.} Since the semigroup of weights is well ordered and there are only finitely many binomials of a given weight, this can be achieved by a transfinite cleaning process eliminating at each step the binomials of least weight belonging to the ideal generated by binomials $U^{m^\ell}-\lambda_\ell U^{n^\ell}$, necessarily of smaller weight.\par
An important ingredient in the approach presented in \cite{Te1} and \cite{Te3} and continued here is the fact that by a result of Pedro Gonz\'alez P\'erez and the author (see \cite{Te1}, 6.2 and \cite{GP-T1}), we have embedded resolutions by birational toric maps of affine toric varieties (of finite embedding dimension) such as ${\rm Spec}k[t^\Gamma]$ when the semigroup $\Gamma$ is finitely generated.\par\noindent By general results in \cite[\S 2.3]{Te1} the ring $R$ is a deformation of ${\rm gr}_\nu R$ and the basic observation is that for rational valuations, when ${\rm gr}_\nu R$ is finitely generated, not only is the faithfully flat specialization of $R$ to ${\rm gr}_\nu R$ equidimensional, so that we have equality in Abhyankar's inequality, but the total space of this toric degeneration is "equiresolvable at $\nu$" in the sense that there exist generators $(\xi_i)_{i\in I}$ of the maximal ideal of $R$ whose initial forms $(\overline\xi_i)_{i\in I}$ are generators of  the $k$-algebra ${\rm gr}_\nu R$ and some birational toric maps \textit{in the coordinates $(\xi_i)_{i\in I}$} which provide, in the coordinates $(\overline\xi_i)_{i\in I}$, an embedded resolution of singularities of the affine toric variety ${\rm Spec}{\rm gr}_\nu R$, and when applied to the generators $\xi_i$ of the maximal ideal of $R$ and localized at the point (or center) picked by $\nu$ provide an embedded local uniformization of the valuation $\nu$.\par Of course, in the general case where ${\rm gr}_\nu R$ is not finitely generated even after replacing $R$ by some birational modification along $\nu$, we must adapt this notion of "$\nu$-equiresolvability" since ${\rm Spec}{\rm gr}_\nu R$ does not even have a resolution of singularities, embedded or not.\par\noindent By a theorem of Piltant (see \cite{Te1}, proposition 3.1) we know however that its Krull dimension ${\rm dim}{\rm gr}_\nu R$ is the rational rank $r(\nu)$ of the valuation.\par\noindent For rational valuations of a noetherian local domain Abhyankar's inequality then reads as the following inequality of Krull dimensions:
$${\rm dim}{\rm gr}_\nu R\leq {\rm dim}R.$$ Those for which equality holds, called Abhyankar valuations, were the subject of \cite{Te3}, where it was shown that their graded algebra was quasi-finitely generated\footnote{This means that there is a local $R$-algebra $R'$ essentially of finite type dominated by the valuation ring $R_\nu$ (a birational $\nu$-modification) such that ${\rm gr}_\nu R'$ is finitely generated. For more on finite and non finite generatedness of ${\rm gr}_\nu R$, see \cite{C}.}, and that they could be uniformized in an embedded manner\footnote{A proof of local uniformization for Abhyankar valuations of algebraic function fields, with assumptions on the base field, had appeared in \cite{K-K} and a more general one in \cite{C2}. See also \cite{T}.}.\par The proof reduced the excellent equicharacteristic case to the complete equicharacteristic case using the good behavior of Abhyankar valuations with respect to completion. The case of complete equicharacteristic local domains was then reduced to a combinatorial problem using a valuative version of Cohen's structure theorem to obtain equations describing the specialization of $R$ to ${\rm gr}_\nu R$.\par\noindent
The geometric interpretation of the first part of the valuative Cohen theorem is that after embedding ${\rm Spec}R$ in an affine space $\A^{\vert I\vert}(k)$ by the generators $(\xi_i)_{i\in I}$, the valuation $\nu$ becomes the trace on ${\rm Spec}R$ of a \textit{monomial} valuation (a.k.a. weight in the sense of \cite[Definition 2.1]{Te3}) on the the ambient possibly infinite dimensional affine space.\par\noindent 
%---------------------
The interpretation of the second part is that the image of the embedding in question is defined by a (possibly infinite) set of equations which constitutes an \textit{overweight deformation} (see definition \ref{overweight} below) of a set of binomial equations defining the generalized affine toric variety ${\rm Spec}{\rm gr}_\nu R$. The classical Cohen structure theorem presents a complete equicharacteristic noetherian local ring $R$ as a quotient of a power series ring in finitely many variables by an ideal generated by overweight deformations of the homogeneous polynomiala defining the associated graded ring ${\rm gr}_mR$ as a quotient of a polynomial ring.

\begin{example}\label{ex1} Let $R$ be the algebra of an algebroid plane branch over an algebraically closed field $k$. That is, a complete equicharacteristic one dimensional local domain with residue field $k$. The integral closure is $R\subset k[[t]]$ and $R$ has only one valuation $\nu$: the $t$-adic valuation. The set $I=(\gamma_0,\gamma_1,\ldots ,\gamma_g)$ of degrees of a minimal set of generators $(\overline\xi_i)_{i\in I}$ of the $k$-algebra ${\rm gr}_\nu R\subset k[t]$, ordered by weight, is in characteristic zero in order preserving bijection with the set $(\beta_0, \beta_1, \ldots ,\beta_g)$ of characteristic Puiseux exponents. For positive characteristc see \cite[Example 6.2]{Te1} or \cite{Te4}. Representatives $\xi_i\in R$ of the $(\overline\xi_i)_{i\in I}$ embed the curve in $\A^{g+1}(k)$ as a complete intersection defined by overweight deformations of the binomials defining the monomial curve ${\rm Spec}k[t^\Gamma]$. (read \cite[Example 5.69]{Te1} in the formal power series context). \end{example}

When strict inequality holds, which implies that the $k$-algebra ${\rm gr}_\nu R$ is not finitely generated,  $\nu$-equiresolvability should mean that some \textit{partial} toric resolutions of the singularities of the toroidal scheme ${\rm Specgr}_\nu R$, affecting only finitely many of the $(\overline\xi_i)_{i\in I}$, when applied to the elements $\xi_i\in R$ instead of the $\overline\xi_i\in {\rm gr}_\nu R$, should uniformize the valuation $\nu$ on ${\rm Spec}R$.\par The problem is to find the appropriate finite subset of the set $L$ of binomial equations written above.  It should generate a prime ideal in a finite set of variables and have the property that some of the embedded toric resolutions of this prime binomial ideal uniformize the valuation. Theorem \ref{AbhApprox} at the end of this paper at least provides candidates. \par
\end{section}
\begin{section}{Valued complete equicharacteristic noetherian local rings and the valuative Cohen theorem}
\begin{subsection}{The graded algebra of a rational valuation is a twisted semigroup algebra}\label{gral}
Let $\nu$ be a rational valuation centered in a noetherian local domain $R$ with residue field $k$. denoting by $\overline k$ the algebraic closure of $k$, given a minimal system of generators $(\overline\xi_i)_{i\in I}$ of the $k$-algebra ${\rm gr}_\nu R$, a simple algebraic argument based on the fact that the group $\Phi$ has a finite rational rank shows that there exist elements $\rho_i\in (\overline k)^*$ such that the map of $\overline k$-algebras $\overline k\otimes_k{\rm gr}_\nu R\to \overline k[t^\Gamma]$ determined by $1\otimes_k\overline\xi_i \mapsto \rho_i t^{\gamma_i}$ is an isomorphism of graded algebras. By \cite[ Theorems 17.1 and 21.4]{G} we know that the Krull dimension of $k[t^\Gamma]$ is the rational rank $r$ of $\nu$ and by Piltant's Theorem that ${\rm gr}_\nu R$ is also an $r$-dimensional domain. Considering the surjective composed map
$$\overline k[(U_i)_{i\in I}]\longrightarrow \overline k[t^\Gamma],\ \ U_i\mapsto \rho_it^{\gamma_i},$$all that is needed is that the $U_i=\rho_i$ are non zero solutions of the system of binomial equations $(U^{m^\ell}-\lambda_\ell U^{n^\ell}=0)_{\ell\in L}$ which generate the kernel. There exists a finite set $F\subset I$ such that the $(\gamma_i)_{i\in F}$ rationally generate the value group $\Phi$ of our valuation. The binomial equations involving only the variables $(U_i)_{i\in F}$ are finite in number and have non zero solutions $\rho_i\in (\overline k)^*$ by the nullstellensatz and quasi-homogeneity. All the other $\gamma_i$ are rationally dependent on those. The result then follows by a transfinite induction in the well ordered set $I$: assuming the algebraicity to be true for binomial equations corresponding to the relations between the $(\gamma_j)_{j\in F\cup\{j\in I\vert j<i\}}$ and choosing non zero algebraic solutions $\rho_j$, we substitute $\rho_j$ for $U_j$ in the binomials involving the variables $(U_j)_{j\in F\cup\{j\in I\vert j<i\}}$ and $U_i$ to obtain binomial equations in $U_i$ with coefficients in $k((\rho_j)_{j\in F\cup\{j\in I\vert j<i\}})$. By rational dependence we know there is at least one such equation, of the form $U_i^{n_i}-m_i(\rho)=0$ where $m_i(\rho)$ is a non zero Laurent term in the $(\rho_j)_{j\in F}$, and in $k((\rho_j)_{j\in F\cup\{j\in I\vert j<i\}})[U_i,U_i^{-1}]$ the ideal generated by the equations coming from all binomials involving $U_i$ is principal so that up to multiplication by a root of unity we have a well defined solution $\rho_i\in k^*$ and this gives the result. So we have proved the following slight generalization of known results (compare with \cite[Proposition 4.7]{Te1} and, for the case of submonoids of $\Z^r$, \cite[Exercise 4.3]{B-G}):
\begin{proposition}\label{smgrp}Let $\Gamma$ be the semigroup of values of a rational valuation $\nu$ on a noetherian local domain $R$ with residue field $k$. Given a minimal set $(\overline\xi_i)_{i\in I}$ of homogeneous generators of the $k$-algebra ${\rm gr}_\nu R $, with ${\rm deg}\xi_i=\gamma_i$, the map which to an isomorphism $\rho$ of graded $\overline k$-algebras $$\rho\colon \overline k\otimes_k{\rm gr}_\nu R \simeq \overline k[t^\Gamma]$$
associates the family of elements $(\rho (1\otimes_k\overline\xi_i)t^{-\gamma_i})_{i\in I}$ of $(\overline k)^*$ defines a bijection from the set of such isomorphisms to the set of $\overline k$-rational points of  ${\rm Specgr}_\nu R$ all of whose coordinates in ${\rm Spec}\overline k[(U_i)_{i\in I}]$ are non zero; a generalized torus orbit. This set is not empty.
\end{proposition}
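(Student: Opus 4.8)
The plan is to treat the two assertions separately. Bijectivity is formal: it rests only on the fact, valid because $\nu$ is rational, that $\overline k\otimes_k{\rm gr}_\nu R$ — exactly like $\overline k[t^\Gamma]$ — is a $\Gamma$-graded $\overline k$-algebra having a one-dimensional component in each degree lying in $\Gamma$ and none in any other degree. Non-emptiness is where the real work lies.

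For bijectivity I would argue as follows. Given a graded isomorphism $\rho$, the composite $\overline k[(U_i)_{i\in I}]\twoheadrightarrow\overline k\otimes_k{\rm gr}_\nu R\xrightarrow{\ \rho\ }\overline k[t^\Gamma]$ is a graded surjection sending the degree-$\gamma_i$ variable $U_i$ into the line $\overline k\,t^{\gamma_i}$, so $U_i\mapsto\rho_i t^{\gamma_i}$ with $\rho_i\in\overline k$; since it factors through ${\rm gr}_\nu R$ the $\rho_i$ solve the binomial system, and $\rho_i\neq 0$ because $\gamma_i$, being a minimal generator, is not in the subsemigroup generated by the other $\gamma_j$, so $t^{\gamma_i}$ would fall outside the image were $\rho_i=0$. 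This is the forward map; it is injective since $\rho$ is determined by its values on the $1\otimes_k\overline\xi_i$. For surjectivity, from a $\overline k$-point $(\rho_i)$ with all coordinates non-zero I would form the assignment $U_i\mapsto\rho_i t^{\gamma_i}$: each binomial $U^{m^\ell}-\lambda_\ell U^{n^\ell}$ being weighted-homogeneous, its two $t$-powers coincide and its image is $\bigl(\rho^{m^\ell}-\lambda_\ell\rho^{n^\ell}\bigr)$ times a power of $t$, hence $0$; so the assignment descends to a graded $\overline k$-algebra map $\overline k\otimes_k{\rm gr}_\nu R\to\overline k[t^\Gamma]$, which is onto because the $\gamma_i$ generate $\Gamma$ and the $\rho_i$ are units, and which — being a graded surjection between two $\Gamma$-graded $\overline k$-algebras of the shape described above — is necessarily an isomorphism. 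Its image under the forward map is $(\rho_i)$.

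For non-emptiness one must exhibit a single element of $(\overline k^*)^I$ solving all the (generally infinitely many) binomial equations. I would first use the finiteness of the rational rank to fix a finite $F\subset I$ whose $(\gamma_i)_{i\in F}$ rationally generate $\Phi$, so that $n_i\gamma_i\in\sum_{j\in F}\Z\gamma_j$ for every $i\in I$ and some $n_i\geq 1$. Over the finitely many variables indexed by $F$ the ambient ring is Noetherian, and the kernel of $\overline k[(U_i)_{i\in F}]\to\overline k\otimes_k k[(\overline\xi_i)_{i\in F}]$ is a binomial ideal containing no monomial — because ${\rm gr}_\nu R$ is a domain, so no product of the $\overline\xi_i$ vanishes, and the ideal is extended from the corresponding one over $k$ — hence by the Nullstellensatz its zero set meets the torus $(\overline k^*)^F$. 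Concretely, the relation lattice $\ker\bigl(\Z^F\to\sum_{j\in F}\Z\gamma_j\bigr)$ carries the homomorphism $a-b\mapsto$ ``the unit $c$ with $\overline\xi^{\,a}=c\,\overline\xi^{\,b}$'', which is well defined because it equals $\overline\xi^{\,a-b}$ in $\mathrm{Frac}({\rm gr}_\nu R)$, and this homomorphism extends to $\Z^F\to\overline k^*$ since $\overline k^*$ is divisible; evaluating on the standard basis produces $(\rho_i)_{i\in F}$. This is the one place where passing from $k$ to $\overline k$ is indispensable.

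The main obstacle is the transfinite extension of this partial solution along the well-ordered set $I$; write $I_{<i}=\{j\in I:j<i\}$. Having chosen $(\rho_j)_{j\in F\cup I_{<i}}$ satisfying every binomial in those variables, I turn to $U_i$: the rational dependence of $\gamma_i$ on the $\gamma_j$, $j\in F$, yields at least one binomial involving $U_i$ which, after specialization of the $\rho_j$, reads $U_i^{n_i}=\mu_i$ with $\mu_i\in\overline k^*$, so $U_i$ already has a value in $\overline k^*$; the delicate point is that one such value, up to a root of unity, must satisfy \emph{every} binomial involving $U_i$ and the earlier variables. After dividing out the common power of $U_i$ in $\overline k[U_i,U_i^{-1}]$ and specializing, each such binomial becomes $U_i^{\,s}=\mu$ with $s\geq 1$ and $\mu\in\overline k^*$ — never of the impossible form ``nonzero $=0$'', since the constant binomials among the earlier variables already hold — and I would show that the ideal generated by all of these is principal, equal to $\bigl(U_i^{\,d}-\mu_0\bigr)$ with $d$ the gcd of the exponents $s$. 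This reduces to the compatibility statement ``$\sum t_k s_k=0\ \Rightarrow\ \prod\mu_k^{t_k}=1$'', which holds because the corresponding identity $\prod(\overline\xi_i^{\,s_k})^{t_k}=1$ is valid in $\mathrm{Frac}({\rm gr}_\nu R)$: there one cancels powers of $\overline\xi_i$ legitimately since ${\rm gr}_\nu R$ is a domain, and a weight count shows the residual Laurent monomial in the $\overline\xi_j$, $j\in F\cup I_{<i}$, has degree $0$ and is therefore a constant already pinned down by the inductive hypothesis (the degree-$0$ homogeneous part of $\mathrm{Frac}({\rm gr}_\nu R)$ being $k$). Any root of $U_i^{\,d}-\mu_0$ is then an admissible $\rho_i$; limit stages require no choice, each binomial involving only finitely many variables, and the resulting family $(\rho_i)_{i\in I}$ is the point sought. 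Finally, any two points of the now non-empty solution set differ by a $\overline k$-valued character of the value group, so this set is a torsor under $\mathrm{Hom}(\Z\Gamma,\overline k^*)$ — the ``generalized torus orbit'' of the statement.
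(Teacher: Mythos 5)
Your proposal is correct and follows essentially the same route as the paper: bijectivity is read off from the one-dimensionality of the homogeneous components, and non-emptiness is obtained by first solving the finitely many binomials in a finite set $F$ of variables whose weights rationally generate $\Phi$ (Nullstellensatz, or your character-extension of the relation lattice into the divisible group $(\overline k)^*$), followed by a transfinite induction over $I$. Your gcd/compatibility verification, reduced via the degree-zero part of the homogeneous fraction field being $k$ and the inductive hypothesis, is exactly the substance of the paper's assertion that the specialized binomials generate a principal ideal in $\overline k[U_i,U_i^{-1}]$, so you have merely supplied details the paper leaves implicit.
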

The graded algebra ${\rm gr}_\nu R$ is a \textit{twisted semigroup algebra} in the sense of \cite[Definition 10.1]{K-M}. In particular, if $k$ is algebraically closed, there are many isomorphisms of graded $k$-algebras from ${\rm gr}_\nu R$ to $k[t^\Gamma]$.\par\noindent
 For a different viewpoint on the same result, and some interesting special cases, see \cite{B-N-S}. 
\subsection{Overweight deformations of prime binomial ideals (after \cite[Section 3]{Te3})}\label{ow}
Let $w\colon \N^I\rightarrow \Phi_{\geq 0}$ be a semigroup map defining a weight on the variables $(u_i)_{i\in I}$ of a polynomial or power series ring over a field $k$, with values in the positive part $\Phi_{\geq 0}$ of a totally ordered group $\Phi$ of finite rational rank. If we are dealing in finitely many variables, the semigroup of values taken by the weight is well-ordered by a theorem of B. Neumann in \cite[Theorem 3.4]{N}. \par Let us consider the power series case, in finitely many variables, and the ring $S=k[[u_1,\ldots ,u_N]]$. Consider the filtration of $S$ indexed by $\Phi_{\geq 0}$ and determined by the ideals $\Qq_\phi$ of elements of weight $\geq \phi$, where the weight of a series is the minimum weight of a monomial appearing in it. Defining similarly $\Qq^+_\phi$ as the ideal of elements of weight $>\phi$, the graded ring associated to this filtration is the polynomial ring $$
\bigoplus_{\phi\in \Phi_{\geq 0}}\Qq_\phi/\Qq^+_\phi=k[U_1,\ldots ,U_N],$$
with $U_i={\rm in}_wu_i$, graded by ${\rm deg}U_i=w(u_i)$.

\begin{definition}\label{overweight}
Given a weight $w$ as above, a (finite dimensional) overweight deformation is the datum of a prime binomial ideal $(u^{m^\ell}-\lambda_\ell u^{n^\ell})_{1\leq \ell \leq s},\ \lambda_\ell\in k^*$, of $S=k[[u_1,\ldots ,u_N]]$ such that the vectors $m^\ell-n^\ell\in \Z^N$ generate the lattice of relations between the $\gamma_i=w(u_i)$, and of series 
$$\begin{array}{lr}
F_1=u^{m^1}-\lambda_1 u^{n^1}+\Sigma_{w(p)>w(m^1)} c^{(1)}_pu^p\\
F_2=u^{m^2}-\lambda_2 u^{n^2}+\Sigma_{w(p)>w(m^2)} c^{(2)}_pu^p\\
.....\\
F_\ell=u^{m^\ell}-\lambda_\ell u^{n^\ell}+\Sigma_{w(p)>w(m^\ell)} c^{(\ell)}_pu^p\\
.....\\
F_s=u^{m^s}-\lambda_s u^{n^s}+\Sigma_{w(p)>w(m^s)} c^{(s)}_pu^p\\
\end{array}\leqno{(OD)}
$$ in $k[[u_1,\ldots ,u_N]]$ such that, with respect to the monomial order determined by $w$, they form a standard basis for the ideal which they generate: their initial forms generate the ideal of initial forms of elements of that ideal.\par\noindent 
Here we have written $w(p)$ for $w(u^p)$ and the coefficients $c^{(\ell)}_p$ are in $k$.\par
The dimension of the ring $R=k[[u_1,\ldots ,u_N]]/(F_1,\ldots ,F_s)$ is then equal to the dimension of $k[[u_1,\ldots ,u_N]]/(u^{m^1}-\lambda_1 u^{n^1},\ldots ,u^{m^s}-\lambda_s u^{n^s})$, which is the rational rank of the group generated by the weights of the $u_i$.\par\noindent
The structure of overweight deformation endows the ring $R$ with a rational valuation. The valuation of a nonzero element of $R$ is the maximum weight of its representatives in $k[[u_1,\ldots ,u_N]]$. It is a rational Abhyankar valuation.
\end{definition}
\end{subsection}

\begin{subsection}{A reminder about the valuative Cohen theorem}\label{reminder} When one wishes to compare an equicharacteristic noetherian local domain $R$ endowed with a rational valuation $\nu$ with its associated graded ring ${\rm gr}_\nu R$, the simplest case is when $R$ contains a field of representatives $k$ and the $k$-algebra ${\rm gr}_\nu R$ is finitely generated. After re-embedding ${\rm Spec} R$ in an affine space where ${\rm Spec}{\rm gr}_\nu R$ can be embedded one can again, at least if $R$ is complete, write down equations for the faithfully flat degeneration of ${\rm Spec}R$ to ${\rm Spec}{\rm gr}_\nu R$ within that new space (see \cite[\S 2.3]{Te1}) and use their special form as overweight deformations to prove local uniformization for the valuation $\nu$ as explained in \cite[\S 3]{Te3}. However, without some strong finiteness assumptions on the ring $R$ and the valuation $\nu$, one cannot hope for such a pleasant situation. The valuative Cohen theorem shows that when $R$ is complete one can obtain a similar state of affairs without any other finiteness assumption than the noetherianity of $R$.\par
Let $\nu$ be a rational valuation on a complete equicharacteristic noetherian local domain $R$. Since $R$ is noetherian, the semigroup of values $\Gamma=\nu(R\setminus\{0\})$ is countable and well ordered, and has a minimal set of generators $(\gamma_i)_{i\in I}$ which is indexed by an ordinal $I\leq \omega^{h(\nu)}$ according to \cite[Appendix 3, Proposition 2]{Z-S}. Consider variables $(u_i)_{i\in I}$ in bijection with the $\gamma_i$ and the $k$-vector space of all sums $\Sigma_{e\in E}d_eu^e$ where $E$ is any set of monomials in the $u_i$ and $d_e\in k$.  Since the values semigroup $\Gamma$ is combinatorially finite this vector space is in fact, with the usual multiplication rule, a $k$-algebra $\widehat{k[(u_i)_{i\in I}]}$, which we endow with a weight by giving $u_i$ the weight $\gamma_i$. Combinatorial finiteness means that there are only finitely many monomials with a given weight, and we can enumerate them according to the lexicographic order of exponents. Thus, we can embed the set of monomials $u^m$ in the well ordered lexicographic product $\Gamma\times \N$, where the finite set of monomials with a given weight is lexicographically ordered (see \cite[\S 4]{Te1}). Combinatorial finiteness also implies that the initial form of every series with respect to the weight filtration is a polynomial so that:\par\noindent \textit{ The graded algebra of $\widehat{k[(u_i)_{i\in I}]}$ with respect to the weight filtration is the polynomial algebra $k[(U_i)_{i\in I}]$ with $U_i={\rm in}_wu_i$, graded by giving $U_i$ the weight $\gamma_i$.}\par
The $k$-algebra $\widehat{k[(u_i)_{i\in I}]}$ is endowed with a monomial valuation given by the weight $w(\Sigma_{e\in E}d_eu^e)={\rm min}_{d_e\neq 0}w(u^e) $. This valuation is rational since all the $\gamma_i$ are $>0$. Note that $0$ is the only element with value $\infty$ because here $\infty$ is an element larger than any element of $\Gamma$. With respect to the "ultrametric" given by $u(x,y)=w(y-x)$, the algebra $\widehat{k[(u_i)_{i\in I}]}$ is spherically complete\footnote{A \emph{pseudo-convergent, {\rm or} pseudo-Cauchy} sequence of elements of $\widehat{k[(u_i)_{i\in I}]}$ is a sequence $(y_\tau)_{\tau\in T}$ indexed by a well ordered set $T$ without last element, which satisfies the condition that whenever $\tau<\tau' <\tau"$ we have $w(y_{\tau'}-y_\tau)<w(y_{\tau"}-y_{\tau'})$ and an element $y$ is said to be a \emph{pseudo-limit} of this pseudo-convergent sequence if $w(y_{\tau'}-y_\tau)\leq w(y-y_\tau)$ for $\tau ,\tau'\in T,\ \tau<\tau'$. Spherically complete means that every pseudo-convergent sequence has a pseudo-limit.} by \cite[Theorem 4.2]{Te3}.
%++++++++++++++++
\begin{lemma}\label{appear} Let $A,B$ be two sets of (not necessarily distinct) monomials. If a monomial $u^c$ appears infinitely many times among the monomials $u^{a+b}$ with $u^a\in A,\ u^b\in B$, then at least one decomposition $c=a+b$ must appear infinitely many times. Thus, some $u^a$ or $u^b$ with $c=a+b$ must appear infinitely many times in $A$ or $B$ respectively.
\end{lemma}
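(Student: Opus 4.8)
The plan is to deduce the lemma from two applications of the pigeonhole principle, the one essential input being that in the algebra $\widehat{k[(u_i)_{i\in I}]}$ a fixed monomial $u^c$ has only \emph{finitely many} factorizations $u^c=u^au^b$. Indeed, the exponent $c$ is a family of non-negative integers with finite support, and any decomposition $c=a+b$ with $a,b$ exponents of monomials forces $0\le a_i\le c_i$ for every $i$ and $b=c-a$; hence the number of such decompositions is the finite product $\prod_i(c_i+1)$ taken over the (finite) support of $c$. Note that this uses only the structure of the monoid of exponents, not the combinatorial finiteness of $\Gamma$.

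Now interpret $A$ and $B$ as indexed families of monomials, say $A=(u^{a_j})_{j\in J}$ and $B=(u^{b_k})_{k\in K}$, so that the monomials $u^{a+b}$ with $u^a\in A$, $u^b\in B$ form the family $(u^{a_j+b_k})_{(j,k)\in J\times K}$, and the hypothesis says that $P=\{(j,k)\in J\times K : a_j+b_k=c\}$ is infinite. For each $(j,k)\in P$ the pair $(a_j,b_k)$ is one of the finitely many factorizations of $c$ described above, so by pigeonhole there is a single factorization $c=a_0+b_0$ for which $P_0=\{(j,k)\in P : a_j=a_0,\ b_k=b_0\}$ is infinite; this is exactly the assertion that some decomposition $c=a+b$ appears infinitely many times. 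For the last sentence, observe that the conditions $a_j=a_0$ and $b_k=b_0$ are independent, so $P_0=\{j\in J : a_j=a_0\}\times\{k\in K : b_k=b_0\}$; a product of two finite sets being finite, at least one factor must be infinite, i.e. $u^{a_0}$ appears infinitely many times in $A$ or $u^{b_0}$ appears infinitely many times in $B$.

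There is no genuine obstacle here; the argument is elementary. The only points that require a little care are the bookkeeping involved in passing between the informal phrase ``a monomial appears infinitely many times'' and the precise statement that an index set is infinite (which is why one should fix the convention that $A$ and $B$ are multisets, i.e. indexed families possibly with repetitions), and the finiteness of the set of factorizations of $u^c$, which is what legitimizes the first pigeonhole step and which is special to exponent monoids with no negative entries.
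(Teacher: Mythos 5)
Your proof is correct, and its core is the same as the paper's: a fixed monomial $u^c$ admits only finitely many decompositions $u^c=u^au^b$, so the pigeonhole principle forces one decomposition, and hence one factor, to occur infinitely often. The one place you diverge is in how that finiteness is justified: the paper derives it from the combinatorial finiteness of the value semigroup $\Gamma$ (only finitely many monomials of a given weight, hence finitely many splittings of $w(u^c)$), whereas you obtain it directly from the fact that the exponent $c$ is a finite-support vector of non-negative integers, so any decomposition satisfies $0\le a\le c$ componentwise and there are at most $\prod_i(c_i+1)$ of them. Your route is slightly more elementary and, as you note, shows the lemma holds for the exponent monoid alone, independently of any hypothesis on $\Gamma$; the paper's phrasing ties the lemma to the combinatorial-finiteness framework in which it is later applied, but nothing more is actually needed. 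Your explicit final step, writing the set of occurrences as a product of two index sets so that an infinite product forces an infinite factor, is a clean way of spelling out what the paper leaves implicit in its last sentence.
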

\begin{proof} The combinatorial finiteness of the semigroup $\Gamma$ implies that the number of posssible distinct decompositions $c=a+b$ is finite. Thus, at least one of them must occur infinitely many times. \end{proof}
\begin{definition}\label{closure} Let $J$ be an ideal in $\widehat{k[(u_i)_{i\in I}]}$. The closure of $J$ with respect to the ultrametric $w$ is the set of elements of $\widehat{k[(u_i)_{i\in I}]}$ which are a transfinite sum of elements of $J$. That is, the set of elements which can be written $y=\sum_{\tau\in T}(y_{\tau+1}-y_\tau)$ where $T$ is a well ordered set, $(y_\tau)_{\tau\in T}$ is a pseudo-convergent sequence and for each $\tau\in T$ we have $y_{\tau+1}-y_\tau\in J$.
\end{definition}
\begin{proposition}\label{closideal}The closure of an ideal of $\widehat{k[(u_i)_{i\in I}]}$ is an ideal.
\end{proposition}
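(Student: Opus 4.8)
The plan is to reduce the statement to two closure properties of $\overline J$, the closure of the ideal $J$: stability under addition and stability under multiplication by an arbitrary $f\in S:=\widehat{k[(u_i)_{i\in I}]}$. Indeed $\overline J$ contains $J$ (constant sequences), hence contains $0$, and it is stable under $y\mapsto -y$: if $y$ is the transfinite sum of the increments $z_\tau:=y_{\tau+1}-y_\tau\in J$, then $-y$ is the transfinite sum of the $-z_\tau\in J$, the sequence $(-y_\tau)$ being again pseudo-convergent with the same sequence of increment weights. Granting the two stability properties, $\overline J$ is then an additive subgroup absorbing multiplication, i.e.\ an ideal. Throughout I would use that in a pseudo-convergent sequence $(y_\tau)$ with pseudo-limit $y$ the increments have strictly increasing weights $w(z_\tau)$ and $w(y-y_\tau)=w(z_\tau)$ for every $\tau$, and conversely that a well ordered family $(z_\tau)$ in $S$ with strictly increasing weights, each monomial lying in the support of only finitely many $z_\tau$, has a well-defined sum $\sum_\tau z_\tau\in S$ (by combinatorial finiteness) which is the pseudo-limit of its partial sums.

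For stability under multiplication, take $y=\sum_\tau z_\tau\in\overline J$ with $z_\tau\in J$ and $f\in S$. Each $fz_\tau$ lies in $J$, and since $w$ is a valuation the weights $w(fz_\tau)=w(f)+w(z_\tau)$ remain strictly increasing. The crux is that $\sum_\tau fz_\tau$ is a legitimate element of $S$ equal to $fy$, and this is exactly where Lemma~\ref{appear} is used: applying it with $A$ the support of $f$ and $B$ the disjoint union, taken as a multiset, of the supports of the $z_\tau$ — a multiset in which each monomial occurs only finitely often, because $\sum_\tau z_\tau$ converges in $S$ — yields that each monomial occurs only finitely often among the $u^{a+b}$, so the coefficient of every monomial of $\sum_\tau fz_\tau$ is a finite sum; computing that coefficient, interchanging it with the finite sum over the decompositions $c=a+b$, shows it agrees with the corresponding coefficient of $fy$. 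Finally the partial sums $fy_\tau$ pseudo-converge to $fy$, since $w(fy-fy_\tau)=w(f)+w(y-y_\tau)=w(f)+w(z_\tau)$ is strictly increasing, with increments $fy_{\tau+1}-fy_\tau=fz_\tau\in J$; hence $fy\in\overline J$.

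For stability under addition, let $y=\sum_\tau z_\tau$ and $y'=\sum_\sigma z'_\sigma$ be in $\overline J$ with $z_\tau,z'_\sigma\in J$. The union of the two sets of increment weights is a subset of $\Gamma$, hence well ordered; list it increasingly as $(\phi_\rho)$. For each $\phi_\rho$ there is at most one $z_\tau$ and at most one $z'_\sigma$ of that weight, and I would take their sum $t_\rho\in J$ as a tentative increment. The corresponding partial sums are the partial sums of $y$ and of $y'$ truncated below each successive weight; they converge coefficientwise to $y+y'$ (each monomial of $y+y'$ occurs in only finitely many of the $z_\tau$ and $z'_\sigma$), and their increments $t_\rho$ lie in $J$, so that if the $t_\rho$ have strictly increasing weights we are done.

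The main obstacle is precisely this last proviso. When a $z_\tau$ and a $z'_\sigma$ of equal weight $\phi_\rho$ are added their leading forms may cancel, so $w(t_\rho)$ jumps strictly above $\phi_\rho$ and can exceed later $\phi_{\rho'}$'s, destroying the strict monotonicity pseudo-convergence demands. I would repair this by re-sorting the $t_\rho$ according to their actual weight and coalescing those of equal weight — legitimate because, by combinatorial finiteness, only finitely many increments can share a given weight, so each coalescing is a finite sum and remains in $J$ — iterating the procedure. The delicate points are the termination of this transfinite re-sorting and the behaviour at limit stages (a priori a transfinite coalescing could produce an increment that is merely an infinite sum of elements of $J$, i.e.\ in $\overline J$ but not in $J$); I would control them by tracking, for each weight value $\phi$, the finitely many original increments contributing a monomial of weight $\phi$ and assembling $y+y'$ weight by weight. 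Apart from this, the argument is formal, the only substantial input being Lemma~\ref{appear} and the multiplicativity of $w$.
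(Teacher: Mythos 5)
Your argument for closure under multiplication is correct and is essentially the paper's: each $fz_\tau$ lies in $J$, the weights $w(f)+w(z_\tau)$ strictly increase, and Lemma~\ref{appear} together with combinatorial finiteness shows that every monomial occurs in only finitely many of the $fz_\tau$, so the transfinite sum exists and equals $fy$. The genuine gap is in the addition step, and it is exactly the one you flag yourself: when increments of equal weight are merged their initial forms may cancel, the re-sorted family can require transfinite coalescing, and at a limit stage a coalesced increment is only a transfinite sum of elements of $J$ --- that is, an element of $\overline{J}$, not visibly of $J$ --- so the recursion is circular; moreover your fallback of ``assembling $y+y'$ weight by weight'' produces isobaric components, which have no reason to lie in $J$ at all. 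As written, the stability of $\overline{J}$ under addition is therefore not proved, and the sketched repair neither terminates in any demonstrated sense nor keeps the increments inside $J$.

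The paper's own proof never meets this difficulty, because it works with the operative reading of Definition~\ref{closure} given by its first sentence: an element of the closure is a transfinite sum of elements of $J$, the only convergence requirement being that each monomial occurs in only finitely many terms (which is what makes the sum meaningful, and which, as the proof notes, must already hold for the sums in the definition). Under that reading closure under addition and subtraction is immediate --- take the union of the two families of terms; every monomial still occurs only finitely often and all terms are in $J$ --- and the only point requiring an argument is the product, which you treated correctly. Your obstacle is created by insisting on the literal re-indexing into a pseudo-convergent sequence with strictly increasing increment weights; the assertion that every monomially finite transfinite sum of elements of $J$ can be re-bracketed into such a sequence with increments still in $J$ is a separate statement, not proved in the paper and not established by your sketch (if you want it, it needs its own argument, for instance along the lines of the rank induction used for Proposition~\ref{h(u)}, but with care at limit weights). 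So either adopt the paper's reading of the definition, in which case your proof closes and coincides with the paper's, or supply that rearrangement lemma; at present the addition step is a gap.
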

\begin{proof} This follows from the fact that a transfinite series exists in $\widehat{k[(u_i)_{i\in I}]}$ if any given monomial appears only finitely many times. Since that must be the case for the sum $y$ of the definition, it is also the case for the sum or difference of two such elements, and for the product with an element of the ring. All the terms of such sums are in $J$.

\end{proof}
\begin{proposition}\label{h(u)} For any series $h(u)\in\widehat{k[(u_i)_{i\in I}]}$ which is without constant term, the series $\sum_{a=0}^\infty h(u)^a$ converges in $\widehat{k[(u_i)_{i\in I}]}$.
\end{proposition}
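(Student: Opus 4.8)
The plan is first to settle what ``converges'' must mean here. Because the generators $\gamma_i$ need not be cofinal in $\Phi$, the weights $w(h(u)^a)=a\,w(h(u))$ need not tend to $\infty$, so there is no naive Cauchy argument; instead I would read the statement through the very definition of $\widehat{k[(u_i)_{i\in I}]}$ as the $k$-algebra of all transfinite $k$-linear combinations of monomials in which a given monomial is mentioned only finitely often. Thus I would reduce Proposition \ref{h(u)} to the combinatorial assertion:
\emph{for every monomial $u^c$ there are only finitely many $a\in\N$ such that $u^c$ occurs in $h(u)^a$.}
Granting this, the coefficient of $u^c$ in $\sum_{a\ge 0}h(u)^a$ is a finite sum of elements of $k$, hence $s:=\sum_{a\ge 0}h(u)^a$ is a well-defined element of $\widehat{k[(u_i)_{i\in I}]}$; moreover the telescoping identity $(1-h(u))\,s=1$ then holds, all rearrangements being legitimate because, exactly as in the proof of Proposition \ref{closideal}, every monomial occurring in the manipulations is mentioned only finitely many times.

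To prove the combinatorial assertion I would argue by total degree. Write $h(u)=\sum_{m\in M}\lambda_m u^m$ with $\lambda_m\in k^*$, where $M$ is the set of monomials actually occurring in $h(u)$. Since $h(u)$ has no constant term, every $m\in M$ is a non-zero element of $\N^I$, hence of total degree $|m|:=\sum_i m_i\ge 1$. A monomial occurring in $h(u)^a$ is of the form $u^{m_1+\cdots+m_a}$ with $u^{m_1},\dots,u^{m_a}\in M$, and then $|m_1+\cdots+m_a|=\sum_j |m_j|\ge a$; therefore $u^c$ can occur in $h(u)^a$ only for $a\le |c|$, which is the desired finiteness. (Alternatively one may run a descent using Lemma \ref{appear}: splitting off the first factor exhibits an occurrence of $u^c$ in some $h(u)^a$ among the products $u^{p+q}$ with $u^p\in M$ and $u^q$ a monomial of some $h(u)^{a-1}$; since no $u^p$ can occur infinitely often in the set $M$, infinitely many occurrences of $u^c$ would force some $u^q$ with $q<c$ to occur in infinitely many $h(u)^{a'}$, and iterating strictly lowers the total degree, a contradiction.) One also notes that the finitely many relevant coefficients are themselves well-defined elements of $k$: in a factorization $c=m_1+\cdots+m_a$ each $m_j$ has support inside the finite set $\mathrm{supp}(c)$ and satisfies $m_j\le c$ coordinatewise, so only finitely many factorizations occur.

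Finally, for later use I would record that $s$ is the pseudo-limit of the partial sums $s_N=\sum_{a=0}^N h(u)^a$. Since $w$ is a monomial valuation whose associated graded ring $k[(U_i)_{i\in I}]$ is a domain, ${\rm in}_w\bigl(h(u)^a\bigr)=\bigl({\rm in}_w h(u)\bigr)^a$ is non-zero and $w(h(u)^a)=a\,w(h(u))$; hence $w(s_{N+1}-s_N)=(N+1)\,w(h(u))$ is strictly increasing in $N$, so $(s_N)$ is pseudo-convergent, and $w(s-s_N)=w\bigl(\sum_{a>N}h(u)^a\bigr)=(N+1)\,w(h(u))$, giving the pseudo-limit inequality (with equality). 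The only point that genuinely needs care is the first one — that despite the weights failing to be cofinal in $\Phi$, summability is governed monomial by monomial by the total-degree bound above; once that observation is in place the rest is formal.
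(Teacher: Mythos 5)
Your proof is correct, but it settles the key point by a genuinely different argument than the paper. You make the same initial reduction as the paper does — ``converges'' means precisely that each monomial occurs in only finitely many of the $h(u)^a$, so that every coefficient of the sum is a finite sum in $k$ — but you then prove this by the elementary total-degree count: since $h$ has no constant term, every monomial of $h$ has total degree at least $1$, hence every monomial occurring (even before cancellation) in $h(u)^a$ has total degree at least $a$, so a fixed $u^c$ can occur only for $a\leq \vert c\vert$, and each relevant coefficient is a finite sum because the finitely supported exponent $c$ admits only finitely many ordered decompositions. The paper instead argues by induction on the rank of the value group $\Phi$: in rank one the archimedean property gives $a\,w(h)>w(u^c)$ for large $a$, and in higher rank it splits $h=h_1+h_2$ along the largest proper convex subgroup $\Psi_1$, expands $h^a$ binomially, and uses Lemma \ref{appear} to force an infinitely recurring monomial into the powers of $h_1$ (excluded by induction) or of $h_2$ (excluded because $\Phi_+\setminus\Psi_1$ is archimedean). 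Your route is shorter, uses neither the weight nor the finite rank of $\Phi$ at this step, and gives the explicit bound $a\leq\vert c\vert$; the paper's route stays inside the weight/convex-subgroup formalism that recurs throughout the section (the same splitting reappears, e.g., in the proof of the valuative Chevalley theorem), but for this particular statement your degree count suffices. Your closing additions — the identity $(1-h)s=1$ and the computation $w(s-s_N)=(N+1)\,w(h)$ via multiplicativity of initial forms in the domain $k[(U_i)_{i\in I}]$ — are correct but not needed for the proposition itself; the first is what Corollary \ref{local} extracts from it afterwards.
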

\begin{proof}Since there is no constant term, we have to prove that any non constant monomial can appear only finitely many times in the series. If the value group is of rank one, the result is clear by the archimedian property. We proceed by induction on the rank. If $\Psi_1$ is the largest non trivial convex subgroup of the value group $\Phi$, we can write $h(u)=h_1(u)+h_2(u)$ where $h_1(u)$ contains all those terms in $h(u)$ which involve only variables with weight in $\Psi_1$. Then $h(u)^a=\sum_{i=0}^a{{a}\choose{i}}h_1(u)^ih_2(u)^{a-i}$. If a monomial appears infinitely many times in the $(h(u)^a)_{a\in \N}$ then applying lemma \ref{appear} to the sets of monomials appearing in the $(h_1(u)^i)_{i\in \N}$ and $(h_2(u)^j)_{j\in \N}$ respectively shows that some monomial must appear infinitely many times in the collection of series $(h_1(u)^i)_{i\in \N}$, which we exclude by the induction hypothesis applied to $\Psi_1$, or some monomial must appear infinitely many times in the collection of series  $(h_2(u)^j)_{j\in \N}$, which is impossible because the semigroup $\Phi_+\setminus \Psi_1$ is archimedian since $\Phi/\Psi_1$ is.
\end{proof}
\begin{corollary}\label{local} The ring $\widehat{k[(u_i)_{i\in I}]}$ is local, with maximal ideal $\hat m$ consisting of series without constant term and residue field $k$. This ideal is the closure of the ideal generated by the $(u_i)_{i\in I}$.
\end{corollary}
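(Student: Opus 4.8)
The plan is to establish the three claims in turn, using Proposition \ref{h(u)} for the local structure and Proposition \ref{closideal} together with combinatorial finiteness for the identification of $\hat m$ with the closure of the ideal generated by the $u_i$. First I would check that the set $\hat m$ of series without constant term is an ideal: it is manifestly a $k$-subgroup, and if $h\in\hat m$ and $g\in\widehat{k[(u_i)_{i\in I}]}$ then, since exponents lie in $\N^I$, the constant monomial $u^0$ can occur in the product $gh$ only as $u^0\cdot u^0$, so $gh$ again has zero constant term; thus $\hat m$ is an ideal. By the same analysis of the coefficient of $u^0$ in a product, the map $y\mapsto(\text{constant term of }y)$ is a surjective homomorphism of $k$-algebras $\widehat{k[(u_i)_{i\in I}]}\to k$ with kernel $\hat m$, so $\widehat{k[(u_i)_{i\in I}]}/\hat m\simeq k$ and $\hat m$ is a maximal ideal with residue field $k$.

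To see that the ring is local it suffices to show that every $y\notin\hat m$ is a unit. Writing $y=d_0+h$ with $d_0\in k^*$ and $h\in\hat m$, and replacing $y$ by $d_0^{-1}y$, we may assume $y=1+h$ with $h$ without constant term. By Proposition \ref{h(u)} applied to $-h$, the series $g:=\sum_{a=0}^\infty(-h)^a$ converges in $\widehat{k[(u_i)_{i\in I}]}$, and I would then verify the identity $(1+h)g=1$. Since multiplication is computed monomial by monomial and, by Proposition \ref{h(u)}, each monomial occurs only finitely often in $g$, hence also in $hg=-\sum_{a\ge 1}(-h)^a$, one may compare, for a fixed monomial $u^c$, the coefficients on both sides of
$$(1+h)g=\sum_{a\ge 0}(-h)^a-\sum_{a\ge 1}(-h)^a .$$
The contributions with $a\ge 1$ cancel, leaving the coefficient of $u^c$ in $(-h)^0=1$. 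Thus $g=(1+h)^{-1}$, $y$ is a unit, and $(\widehat{k[(u_i)_{i\in I}]},\hat m)$ is local.

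For the last assertion, set $J=(u_i)_{i\in I}$ and let $\overline J$ be its closure; the goal is $\overline J=\hat m$. Every element of $J$ has zero constant term, and in any transfinite sum as in Definition \ref{closure} each monomial appears only finitely many times (proof of Proposition \ref{closideal}), so the constant term of such a sum is a finite sum of zeros and $\overline J\subseteq\hat m$. Conversely, take $y=\sum_e d_eu^e\in\hat m$: each monomial $u^e$ with $d_e\ne 0$ is non-constant, hence divisible by some $u_i$ and lying in $J$. By combinatorial finiteness the set $\Phi_y\subseteq\Gamma$ of weights occurring in $y$ is well ordered, the homogeneous parts $z_\phi:=\sum_{w(u^e)=\phi}d_eu^e$ (for $\phi\in\Phi_y$) are polynomials, hence lie in $J$, and $y=\sum_{\phi\in\Phi_y}z_\phi$, the $z_\phi$ having pairwise disjoint supports. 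If $\Phi_y$ has no largest element, enumerate it increasingly as $(\phi_\tau)_{\tau<\lambda}$ with $\lambda$ a limit ordinal and set $y_\tau=\sum_{\sigma<\tau}z_{\phi_\sigma}$; then $y_{\tau+1}-y_\tau=z_{\phi_\tau}\in J$ and, for $\tau<\tau'<\tau''$, $w(y_{\tau'}-y_\tau)=\phi_\tau<\phi_{\tau'}=w(y_{\tau''}-y_{\tau'})$, so $(y_\tau)$ is pseudo-convergent with transfinite sum $y$, whence $y\in\overline J$. In general $\Phi_y$ may carry finitely many weights above its limit-ordinal part; one removes the corresponding $z_\phi\in J$ one at a time and applies the previous case to the remainder, using that $\overline J$ is an ideal (Proposition \ref{closideal}). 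This gives $\hat m\subseteq\overline J$, hence equality.

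The only genuinely delicate step is the unit computation: when the rational rank of $\Phi$ exceeds one the powers $(-h)^a$ need not tend to $0$ in the weight topology, so both the convergence of $g$ and the identity $(1+h)g=1$ must be read off coefficient by coefficient — which is exactly what Proposition \ref{h(u)} and combinatorial finiteness make legitimate. Everything else is routine bookkeeping with well-ordered index sets.
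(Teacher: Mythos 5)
Your proof is correct and follows essentially the same route as the paper: invertibility of elements with nonzero constant term via the geometric series of Proposition \ref{h(u)}, and the identification of $\hat m$ with the closure of $((u_i)_{i\in I})$ by writing any series without constant term as a transfinite sum of its $w$-isobaric (polynomial) parts. You simply spell out the coefficientwise justification of $(1+h)\sum_a(-h)^a=1$ and the pseudo-convergence of the partial isobaric sums, which the paper leaves implicit.
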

Indeed the proposition shows that the non invertible elements of the ring are exactly those without constant term, which form an ideal. The second assertion follows from the fact that any series without constant term is a transfinite sum of $w$-isobaric polynomials in the $u_i$.
\begin{proposition}\label{hensel}The local ring $\widehat{k[(u_i)_{i\in I}]}$ is henselian.
\end{proposition}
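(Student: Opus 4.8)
The plan is to verify the simple‑root form of Hensel's lemma, namely that for every monic $f\in\widehat{k[(u_i)_{i\in I}]}[T]$ and every $\bar\alpha_0\in k$ with $\bar f(\bar\alpha_0)=0$ and $\bar f'(\bar\alpha_0)\neq 0$, the polynomial $f$ has a root $\alpha\in\widehat{k[(u_i)_{i\in I}]}$ whose residue is $\bar\alpha_0$; for a local ring this property is equivalent to henselianity. Everything needed to run a transfinite Newton iteration is already available from the preceding results: $\widehat{k[(u_i)_{i\in I}]}$ is local with residue field $k$ and maximal ideal $\hat m$ the set of series of positive weight (Corollary \ref{local}), which is closed for the ultrametric $w$ by Proposition \ref{closideal}; any series with nonzero constant term is invertible, its inverse being the convergent geometric series of Proposition \ref{h(u)}; and the ring is spherically complete by \cite[Theorem 4.2]{Te3}.

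I would then construct $(\alpha_\tau)$ by transfinite recursion, starting from a lift $\alpha_0\in k\subset\widehat{k[(u_i)_{i\in I}]}$ of $\bar\alpha_0$. At a successor stage, $\alpha_\tau\equiv\alpha_0 \pmod{\hat m}$ forces $f'(\alpha_\tau)$ to have residue $\bar f'(\bar\alpha_0)\neq0$, hence to be a unit, so one sets $\alpha_{\tau+1}=\alpha_\tau-f(\alpha_\tau)f'(\alpha_\tau)^{-1}$; the increment $\Delta_\tau:=\alpha_{\tau+1}-\alpha_\tau$ then lies in $\hat m$ and has weight $w(\Delta_\tau)=w(f(\alpha_\tau))$. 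Feeding this into the polynomial identity $f(T+S)=f(T)+f'(T)S+S^2g(T,S)$ and using $f(\alpha_\tau)+f'(\alpha_\tau)\Delta_\tau=0$ gives $w(f(\alpha_{\tau+1}))\geq 2\,w(f(\alpha_\tau))>w(f(\alpha_\tau))$, so the weights $w(f(\alpha_\tau))$ strictly increase and $(\alpha_\tau)$ is pseudo-convergent with $w(\alpha_{\tau'}-\alpha_\tau)=w(f(\alpha_\tau))$. At a limit ordinal $\lambda$ one takes $\alpha_\lambda$ to be a pseudo-limit of $(\alpha_\tau)_{\tau<\lambda}$ (it exists by spherical completeness and lies in $\alpha_0+\hat m$ because $\hat m$ is closed), and the same Taylor computation — now with $\alpha_\lambda-\alpha_\tau$ in place of $\Delta_\tau$, $w(\alpha_\lambda-\alpha_\tau)=w(f(\alpha_\tau))$ and $w(\alpha_\lambda-\alpha_{\tau+1})=w(f(\alpha_{\tau+1}))$ — shows $w(f(\alpha_\lambda))>w(f(\alpha_\tau))$ for all $\tau<\lambda$, so the recursion can be continued.

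To finish, observe that the weights $w(f(\alpha_\tau))$ form a strictly increasing transfinite sequence in the well-ordered value semigroup $\Gamma$ (well ordered since $R$ is noetherian, cf. \cite[Appendix 3, Proposition 2]{Z-S}), and no such sequence can be indexed by arbitrarily large ordinals; hence at some stage $\tau^\ast$ one has $f(\alpha_{\tau^\ast})=0$, and $\alpha:=\alpha_{\tau^\ast}$ is the desired root. The step I expect to be the real obstacle is the limit-ordinal stage: one must be sure that the Newton sequence genuinely has a pseudo-limit \emph{inside} $\widehat{k[(u_i)_{i\in I}]}$ and that $f$ and $f'$ evaluated there obey the same weight estimates as at finite stages — this is precisely where spherical completeness (\cite[Theorem 4.2]{Te3}) and the combinatorial finiteness of $\Gamma$ (which makes $w$ a genuine ultrametric into a well-ordered set, so that pseudo-limits behave as expected) enter in an essential way. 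One could equally well run the analogous transfinite Newton iteration on a coprime factorization $\bar f=\bar g_0\bar h_0$ and lift it to a factorization of $f$; the single-root version merely trades a little of that bookkeeping for an appeal to the standard equivalence between the various formulations of henselianity of a local ring.
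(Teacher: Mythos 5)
Your argument is correct, but it is not the route the paper takes: the paper's proof is a two-line appeal to the characterization of henselian local rings by the convergence of pseudo-convergent sequences of \'etale type in \cite[Proposition 3.7]{dF-T}, combined with spherical completeness (\cite[Theorem 4.2]{Te3}) and the finite rational rank of the value group, whereas you verify the simple-root lifting criterion directly by a transfinite Newton iteration. Your limit-stage analysis is sound (the pseudo-limit satisfies $w(\alpha_\lambda-\alpha_\tau)=w(f(\alpha_\tau))$, and Taylor expansion at $\alpha_{\tau+1}$ gives $w(f(\alpha_\lambda))\geq w(f(\alpha_{\tau+1}))>w(f(\alpha_\tau))$), and your termination argument is a genuine simplification specific to this ring: since $w$ takes its values in the well-ordered semigroup $\Gamma$, the strictly increasing gauges $w(f(\alpha_\tau))$ embed the length of the iteration into $\Gamma$, so the recursion must stop at an exact root --- in a general spherically complete valued ring one cannot argue this way, which is exactly why \cite{dF-T} works with pseudo-limits of sequences of \'etale type rather than with termination. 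What each approach buys: the paper's citation is short and rests on a general theorem valid without well-orderedness (this is where the finite rational rank hypothesis does its work), while yours is self-contained apart from two standard external facts you should acknowledge as such --- the equivalence of henselianity with the simple-root lifting property for monic polynomials over a local ring, and the spherical completeness statement itself, which here as in the paper comes from \cite[Theorem 4.2]{Te3}.
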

\begin{proof} This follows from the fact that it is spherically complete, that the value group of $w$ is of finite rational rank, and the characterization of henselian rings by the convergence of pseudo-convergent sequences of \'etale type found in \cite[Proposition 3.7]{dF-T}.
\end{proof}
\begin{corollary}\label{algebraic} The henselization of the localization $k[(u_i)_{i\in I}]_{\hat m\cap k[(u_i)_{i\in I}]}$ of the polynomial ring $k[(u_i)_{i\in I}]$ at the maximal ideal which is the ideal of polynomials without constant term, is contained in $\widehat{k[(u_i)_{i\in I}]}$. 
\end{corollary}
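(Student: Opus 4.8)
The idea is to obtain the map out of the henselization from its universal property with respect to the henselian ring of Proposition~\ref{hensel}, and then to verify injectivity by hand. Write $A=k[(u_i)_{i\in I}]_{\mathfrak n}$, where $\mathfrak n=\hat m\cap k[(u_i)_{i\in I}]$ is the maximal ideal of polynomials without constant term. First I would note that the tautological inclusion $k[(u_i)_{i\in I}]\subset\widehat{k[(u_i)_{i\in I}]}$ carries every polynomial with nonzero constant term to a unit, by Corollary~\ref{local}; hence it factors through an injection of the localization $A$ into $\widehat{k[(u_i)_{i\in I}]}$, which is a \emph{local} homomorphism since $\mathfrak n$ lands inside $\hat m$. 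As $\widehat{k[(u_i)_{i\in I}]}$ is henselian by Proposition~\ref{hensel}, the universal property of the henselization produces a unique local $A$-algebra homomorphism $\psi\colon A^h\to\widehat{k[(u_i)_{i\in I}]}$, and the statement amounts to showing that $\psi$ is injective.

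To prove injectivity I would argue as follows. Both rings are domains: $\widehat{k[(u_i)_{i\in I}]}$ because it carries the valuation $w$ of subsection~\ref{reminder}, and $A^h$ because it is a filtered colimit of pointed \'etale neighbourhoods $A_\lambda$ of the normal local domain $A$ --- each $A_\lambda$ being \'etale over $A$ and local, hence itself a normal local domain --- along transition maps that are flat and local, hence faithfully flat and injective. Therefore $\ker\psi$ is a prime ideal of $A^h$, and it contracts to $(0)$ in $A$ because $A$ embeds in $\widehat{k[(u_i)_{i\in I}]}$. The crucial point is then that $(0)$ is the \emph{only} prime of $A^h$ lying over $(0)\subset A$: the localization $A^h\otimes_A\mathrm{Frac}(A)$ is the filtered colimit of the rings $A_\lambda\otimes_A\mathrm{Frac}(A)$, each of which is a domain that is \'etale over the field $\mathrm{Frac}(A)$, hence a finite separable field extension of it; so the colimit is a field and $\mathrm{Spec}\,(A^h\otimes_A\mathrm{Frac}(A))$ is a single point. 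Consequently $\ker\psi=(0)$, and $\psi$ exhibits $A^h$ as a subring of $\widehat{k[(u_i)_{i\in I}]}$.

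An alternative route I would keep in reserve reduces to finite $I$, where $\widehat{k[(u_i)_{i\in I}]}$ is literally the formal power series ring $k[[u_i:i\in I]]$ (combinatorial finiteness is automatic for finitely many variables of positive weight), $A$ is a noetherian regular local ring with completion $k[[u_i:i\in I]]$, and the inclusion $A^h\subset\widehat A=k[[u_i:i\in I]]$ is classical; for general $I$ one writes $A=\varinjlim_F A_F$ over the finite subsets $F\subset I$, uses that henselization commutes with filtered colimits of local rings to get $A^h=\varinjlim_F A_F^h$, and notes that each $A_F^h\subset k[[u_i:i\in F]]\subset\widehat{k[(u_i)_{i\in I}]}$ --- the last inclusion because a series in finitely many of the variables is combinatorially finite for $w$ --- these inclusions being compatible by uniqueness in the universal property. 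In either approach the only genuine obstacle is the injectivity of the canonical map: $A^h$ is pinned down only by a universal property, so one must bring in its internal description as a colimit of \'etale neighbourhoods of the normal domain $A$ (equivalently, that $A^h\otimes_A\mathrm{Frac}(A)$ is a field) to see that nothing in $A^h$ can die in $\widehat{k[(u_i)_{i\in I}]}$; the local-homomorphism check, the passage to finite $I$, and the compatibility of the canonical maps are all routine once Proposition~\ref{hensel} and Corollary~\ref{local} are in hand.
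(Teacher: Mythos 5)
Your proposal is correct and follows essentially the same route as the paper: the paper's one-line proof likewise observes that, by Proposition \ref{h(u)}, every polynomial outside $\hat m\cap k[(u_i)_{i\in I}]$ becomes a unit in $\widehat{k[(u_i)_{i\in I}]}$, so the localization maps locally into this ring, and then invokes its henselianity (Proposition \ref{hensel}) to conclude that the henselization is contained in it. The only difference is that you additionally verify the injectivity of the canonical map from the henselization (via the \'etale-neighbourhood description and the fact that $A^h\otimes_A\mathrm{Frac}(A)$ is a field), a point the paper's proof leaves implicit.
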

\begin{proof} By proposition \ref{h(u)}, any element of $k[(u_i)_{i\in I}]$ which is not in the maximal ideal is invertible in $\widehat{k[(u_i)_{i\in I}]}$, which is henselian.
\end{proof}
\begin{corollary}\label{substitute} If the series $h(u)$ has no constant term, given any other series $g(u)\in \widehat{k[(u_i)_{i\in I}]}$ and a monomial $u^m$, writing $g(u)=\sum_{a\in \N}g_a(u)(u^m)^a$, where no term of a $g_a(u)$ is divisible by $u^m$, the substitution $\sum_{a\in \N}g_a(u)h(u)^a$ gives an element of $\widehat{k[(u_i)_{i\in I}]}$: in this ring, we can substitute a series without constant term for any monomial, and in particular for any variable.\end{corollary}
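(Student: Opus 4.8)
The plan is to deduce this from Proposition~\ref{h(u)} together with the mere fact that $\widehat{k[(u_i)_{i\in I}]}$ is a ring, via a domination argument. Recall first that, by construction, $\widehat{k[(u_i)_{i\in I}]}$ is the $k$-vector space of \emph{all} formal sums $\sum_{e\in E}d_eu^e$ over sets $E$ of monomials in the $u_i$. In particular each $g_a(u)$, being a sum of pairwise distinct monomials with coefficients in $k$, is an element of it (the decomposition $g(u)=\sum_a g_a(u)(u^m)^a$ being legitimate and unique, obtained by grouping the monomials of $g$ according to the largest power of $u^m$ dividing them), and hence so is each product $g_a(u)h(u)^a$. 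Thus the only thing to prove is that the transfinite sum $\sum_{a\in\N}g_a(u)h(u)^a$ is again an element of $\widehat{k[(u_i)_{i\in I}]}$, i.e.\ that every monomial $u^c$ occurs only finitely many times in the expansion of this sum.

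For that I would form $H(u):=\sum_{a\in\N}h(u)^a$, which lies in $\widehat{k[(u_i)_{i\in I}]}$ by Proposition~\ref{h(u)} since $h$ has no constant term, and $G(u):=\sum_{e}u^e$, the sum over \emph{all} monomials in the $u_i$, which lies in $\widehat{k[(u_i)_{i\in I}]}$ by the description just recalled. Since that ring is a ring, the product $G(u)H(u)=\sum_{a\in\N}G(u)h(u)^a$ is one of its elements, so every monomial occurs only finitely many times in the expansion $\sum_{a\in\N}G(u)h(u)^a$. Now an occurrence of a monomial $u^c$ in $g_a(u)h(u)^a$ consists of a monomial $u^p$ appearing in $g_a(u)$ together with an ordered $a$-tuple of monomials appearing in $h(u)$ whose product with $u^p$ is $u^c$; since $u^p$ also appears in $G(u)$, this is in particular an occurrence of $u^c$ in $G(u)h(u)^a$. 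Summing over $a$, the occurrences of $u^c$ in $\sum_{a\in\N}g_a(u)h(u)^a$ inject into those of $u^c$ in $\sum_{a\in\N}G(u)h(u)^a$, hence are finite in number, so $\sum_{a\in\N}g_a(u)h(u)^a\in\widehat{k[(u_i)_{i\in I}]}$. Taking $u^m$ to be a single variable $u_i$ gives the last assertion.

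The genuine content of the statement sits entirely in Proposition~\ref{h(u)}, which is already proved: combinatorial finiteness of $\Gamma$ alone does not bound which exponents $a$ can contribute to a given monomial — in higher rank a monomial of small weight can be divisible by arbitrarily high powers of a monomial whose weight lies in a proper convex subgroup, so the naive bound ``$a\cdot w(h)\le w(c)$'' is vacuous — and it is precisely the induction on convex rank in the proof of \ref{h(u)} that disposes of this. Granting \ref{h(u)}, the present corollary is bookkeeping: beyond it one uses only that $\widehat{k[(u_i)_{i\in I}]}$ is a ring (again combinatorial finiteness) and the trivial domination that the support of $g_a$ is contained in that of $G$. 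So I do not anticipate a real obstacle; the only point requiring a little care is to keep the meaning of ``occurs finitely many times'' consistent with the proof of \ref{h(u)} and with the definition of $\widehat{k[(u_i)_{i\in I}]}$.
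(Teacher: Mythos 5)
Your argument is correct, but it is not the paper's argument. The paper works directly with the family $(g_a)_{a\in\N}$: it first asserts that no monomial occurs infinitely often among the $g_a$, then applies Lemma \ref{appear} to the monomials of the $(g_a)_{a\in\N}$ and of the $(h(u)^a)_{a\geq 1}$, so that a monomial recurring infinitely often in $\sum_a g_a h^a$ would force one to recur infinitely often among the $h^a$, contradicting Proposition \ref{h(u)}. You instead majorize the $g_a$ by the single series $G=\sum_e u^e$ of all monomials (an element of $\widehat{k[(u_i)_{i\in I}]}$, as the paper notes in Remark \ref{HS}), form $H=\sum_a h^a$ via Proposition \ref{h(u)}, and inject occurrences of a monomial in $\sum_a g_a h^a$ into occurrences in $\sum_a G h^a$. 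What this buys is robustness: your count is completely insensitive to how often a monomial recurs among the $g_a$, a point on which the paper's opening sentence is delicate --- for $g=\sum_a u_j(u^m)^a$ with $u^m\neq u_j$ one has $g_a=u_j$ for every $a$, so a monomial \emph{can} recur infinitely often among the $g_a$; what makes the corollary true is the diagonal pairing of $g_a$ with $h^a$, which is exactly what your injection exploits. The one step you should tighten is ``since the ring is a ring, $G(u)H(u)$ is one of its elements, so every monomial occurs only finitely many times in the expansion $\sum_a G(u)h(u)^a$'': membership of a product in the ring is automatic and concerns coefficients (a fixed exponent $c$ admits only finitely many splittings $c=p+q$), so by itself it says nothing about occurrence counts; what you actually need is that for each splitting the monomial $u^q$ occurs only finitely often among the $(h^a)_a$ --- which is what the proof of Proposition \ref{h(u)} establishes, this being the sense in which such sums converge throughout the paper (cf.\ Proposition \ref{closideal}) --- combined with the finiteness of splittings; phrased at the level of occurrences, your middle step is airtight and amounts to Lemma \ref{appear} applied to $G$ and the $(h^a)_{a\geq 1}$. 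Finally, your closing remark is off: a fixed monomial cannot be divisible by arbitrarily high powers of a nontrivial monomial, since exponents have finite support; in fact, because every term of $h$ has total degree at least $1$, any occurrence of $u^c$ in $g_a h(u)^a$ forces $a\leq \deg u^c$, so it is the weight bound, not the degree bound, that becomes vacuous in higher rank.
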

\begin{proof} Since any monomial appears finitely many times in $g(u)$, by combinatorial finiteness, any monomial can appear only finitely many times among the series $(g_a(u))_{a\in\N}$.\par\noindent Should a monomial appear infinitely many times among the terms of the sum $\sum_{a\in \N}g_a(u)h(u)^a$, by lemma \ref{appear} applied to the set of monomials appearing in the $(g_a(u))_{a\in\N}$ and the set of monomials appearing in the $(h(u)^a)_{a\in\N\setminus\{0\}}$, a monomial would have to appear infinitely many times among the $h(u)^a$ and this would contradict proposition \ref{h(u)}.
\end{proof}
\textit{In conclusion, $\widehat{k[(u_i)_{i\in I}]}$ is regular in any reasonable sense and has almost all of the properties of the usual power series rings except of course noetherianity if $\Gamma$ is not finitely generated. If $\Gamma$ is finitely generated, $\widehat{k[(u_i)_{i\in I}]}$ is the usual power series ring in finitely many variables, equipped with a weight.}
 \begin{theorem}\label{cohen}{{\rm (The valuative Cohen theorem, part 1)}} Let $R$ be a complete equicharacteristic noetherian local domain and let $\nu$ be a rational valuation of $R$. We fix a field of representatives $k\subset R$ of the residue field $R/m$ and a minimal system of homogeneous generators $(\overline\xi_i)_{i\in I}$ of the graded $k$-algebra ${\rm gr}_\nu R$. There exist choices of representatives $\xi_i\in R$ of the $\overline\xi_i$ such that the application $u_i\mapsto \xi_i$ determines a surjective map of $k$-algebras 
 $$\pi\colon \widehat{k[(u_i)_{i\in I}]}\longrightarrow R$$
 which is continuous with respect to the topologies associated to the filtrations by weight and by valuation respectively. The associated graded map with respect to these filtrations is the map
 $${\rm gr}_w\pi\colon k[(U_i)_{i\in I}]\longrightarrow {\rm gr}_\nu R,\ \ U_i\mapsto\overline\xi_i$$
whose kernel is a prime ideal generated by binomials $(U^{m^\ell}-\lambda_\ell U^{n^\ell})_{\ell\in L},\ \lambda_\ell\in k^*$.\par\noindent
If the semigroup $\Gamma=\nu(R\setminus\{0\})$ is finitely generated or if the valuation $\nu$ is of rank one, one may take any system of representatives $(\xi_i)_{i\in I}$. \end{theorem}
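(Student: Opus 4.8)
The plan is to transpose, transfinitely, the proof of the classical Cohen structure theorem. There are three tasks: to build the homomorphism $\pi$ and verify it is well defined and continuous, to read off its associated graded map, and to prove surjectivity by successive approximation. The one genuinely new point — invisible when $\Gamma$ is finitely generated or $\nu$ has rank one, which is why those cases are set apart in the statement — is that $\widehat{k[(u_i)_{i\in I}]}$ is not noetherian and the $\nu$-adic topology of $R$ need not coincide with its $m$-adic one; this is exactly what makes the choice of the representatives $\xi_i$ of the $\overline\xi_i$ delicate.

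First I would reconcile the two topologies on $R$. Since $\bigcap_\phi\Pp_\phi(R)=(0)$ and $R$ is complete, Chevalley's theorem, applied to a decreasing sequence $\Pp_{\phi_n}(R)$ with $(\phi_n)$ increasing and cofinal in $\Phi_{\geq 0}$, produces for every $r$ an index $n$ with $\Pp_{\phi_n}(R)\subseteq m^r$; conversely $m^r\subseteq\Pp_\phi(R)$ as soon as $r\mu\geq\phi$, where $\mu=\min\nu(m\setminus\{0\})>0$. Hence the weight and valuation filtrations are cofinal in one another, $R$ is complete for the $\nu$-adic topology, and a transfinite sum of elements of $R$ whose $m$-orders tend to infinity converges in $R$. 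Now for the crucial choice of representatives: I claim one can pick $\xi_i\in\Pp_{\gamma_i}(R)$ lifting $\overline\xi_i$ so that, for every $r$, all but finitely many of the $\xi_i$ lie in $m^r$. Granting this, for each $\phi$ only finitely many monomials $u^e$ satisfy $\xi^e\notin\Pp_\phi(R)$, so $\sum_{e\in E}d_eu^e\mapsto\sum_{e\in E}d_e\xi^e$ is a well-defined $k$-algebra map $\pi$ with $\pi(\Qq_\phi)\subseteq\Pp_\phi(R)$, hence continuous, and sending $u_i$ to $\xi_i$. To prove the claim I would descend the convex subgroup filtration of $\Phi$: for a nonzero convex subgroup $\Psi$ and $\mathfrak q_\Psi=\{x\in R:\nu(x)>\Psi\}$, the ideal $\mathfrak q_\Psi$ is prime, $R/\mathfrak q_\Psi$ is again a complete local domain carrying the induced valuation, of rank that of $\Psi$, and the previous step applied to it puts its value ideals of large value inside high powers of its maximal ideal; lifting back, a generator $\xi_i$ with $\gamma_i\in\Psi$ large is an element of a high power of $m$ plus an element of $\mathfrak q_\Psi$, and this second summand, of value $>\gamma_i$, can be deleted without disturbing ${\rm in}_\nu\xi_i$. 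Passing through the finitely many convex subgroups of $\Phi$, and using the previous step for the topmost quotient, gives the claim, hence $\pi$. When $\Gamma$ is finitely generated the index set $I$ is finite, and when $\nu$ has rank one the $\gamma_i$ already tend to infinity, so in both cases no adjustment is necessary and any system of representatives works.

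The remaining steps are comparatively soft. Being filtered and continuous, $\pi$ induces ${\rm gr}_w\pi\colon k[(U_i)_{i\in I}]\to{\rm gr}_\nu R$ with $U_i\mapsto{\rm in}_\nu\xi_i=\overline\xi_i$; it is surjective because the $\overline\xi_i$ generate ${\rm gr}_\nu R$, its kernel is a prime ideal because ${\rm gr}_\nu R$ is a domain, and that this prime is generated by binomials $U^{m^\ell}-\lambda_\ell U^{n^\ell}$ with $\lambda_\ell\in k^*$, encoding a generating set of relations among the $\gamma_i$, is the twisted semigroup algebra description of ${\rm gr}_\nu R$ recalled above (Proposition \ref{smgrp}). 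For the surjectivity of $\pi$ itself I would argue by transfinite approximation: given $f\in R\setminus\{0\}$, the initial form ${\rm in}_\nu f$ is a $w$-homogeneous polynomial $P_1((\overline\xi_i))$ of weight $\nu(f)$, so $f-P_1((\xi_i))$ has strictly larger value; iterating, with limit stages legitimized by the previous paragraph, produces elements whose $\nu$-values strictly increase in the well ordered semigroup $\Gamma$, which cannot go on through arbitrarily large ordinals, so the process stops, leaving $f=\sum_\sigma P_\sigma((\xi_i))=\pi\big(\sum_\sigma P_\sigma((u_i))\big)$, the series $\sum_\sigma P_\sigma((u_i))$ being a bona fide element of $\widehat{k[(u_i)_{i\in I}]}$ since the $P_\sigma$ have pairwise distinct weights, so each monomial occurs in it at most once.

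The main obstacle is the choice of representatives in the infinitely generated, higher rank case: the whole construction rests on arranging that $\pi$ is defined on all of the non-noetherian ring $\widehat{k[(u_i)_{i\in I}]}$, that is, on taming the interplay between the weight filtration and the $m$-adic topology of $R$. Once that is in place, the comparison of topologies is Chevalley's theorem, the associated graded map is bookkeeping together with Proposition \ref{smgrp}, and the surjectivity is the transfinite version of the classical Cohen argument.
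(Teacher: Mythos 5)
Your overall architecture does match the actual proof (which the paper defers to \cite[\S 4]{Te3}): everything hinges on the key claim that the representatives $\xi_i$ can be chosen so that for every $r$ all but finitely many of them lie in $m^r$; granted that, well-definedness and continuity of $\pi$, the description of ${\rm gr}_w\pi$ via Proposition \ref{smgrp}, the transfinite surjectivity argument, and the rank-one/finitely generated exceptions go through essentially as you say. One intermediate assertion does need repair: it is false in rank $>1$ that for each $\phi$ only finitely many monomials satisfy $\xi^e\notin\Pp_\phi(R)$, since $\nu(\xi^e)=w(u^e)$ and there may be infinitely many monomials of weight $<\phi$; likewise the weight and valuation filtrations are not mutually cofinal (only the $\nu$-adic topology refines the $m$-adic one, never the converse in general, as Example \ref{nochev} shows). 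The correct route from your claim is $m$-adic: for each $r$ all but finitely many monomials $\xi^e$ lie in $m^r$, so the sums converge $m$-adically, and $\pi(\Qq_\phi)\subseteq\Pp_\phi(R)$ because the ideals $\Pp_\phi(R)$ are $m$-adically closed.

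The genuine gap is in your proof of the key claim. Your descent through the convex subgroups of $\Phi$ only reaches two kinds of generators: those whose value $\gamma_i$ lies in a proper convex subgroup $\Psi$ (treated through $R/\mathfrak q_\Psi$) and those which are ``large'' (caught by a Chevalley bound). It says nothing about the generators whose image under $\lambda\colon\Phi\to\Phi/\Psi_1$ is positive but small: such $\gamma_i$ lie in no proper convex subgroup, so no quotient $R/\mathfrak q_\Psi$ sees them, and they are not large, so no Chevalley bound catches them; yet there may be infinitely many of them -- this is exactly how $I$ can reach ordinal $\omega^h$ -- for instance infinitely many $\gamma_i$ with the same value $\eta=\nu_1(\gamma_i)>0$, where $\nu_1$ is the rank-one valuation with which $\nu$ is composed. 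These are the heart of the difficulty, and the paper's treatment (see the proof of Theorem \ref{Chevalley}, following \cite{Te3}) uses a mechanism your sketch does not contain: for each of the finitely many relevant values $\eta$, apply Chevalley's theorem to the filtration of the finitely generated $R$-module $\Pp_\eta/\Pp^+_\eta$ (value ideals of $\nu_1$) by the images of the $\Pp_\phi$ with $\lambda(\phi)=\eta$, and then correct the representative $\xi_i$ by an element of $\Pp^+_\eta$, which has $\nu_1$-value $>\eta$ and hence $\nu$-value $>\gamma_i$ but is in general \emph{not} in $\mathfrak q_{\Psi_1}$, so your ``delete the summand lying in $\mathfrak q_\Psi$'' step cannot produce this correction; one also needs the finiteness of the set of generators in each fiber $\lambda^{-1}(\eta)$ below the bound (\cite[Proposition 3.17]{Te1}). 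Without this fiberwise module argument your induction does not establish the claim, and once it is added your proof essentially becomes the one in \cite{Te3}.
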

This is proved in \cite[\S 4]{Te3} and we recall that in view of \cite[Proposition 3.6]{Te3}, the initial form ${\rm in}_wh(u)$ of an element $h(u)\in \widehat{k[(u_i)_{i\in I}]}$ is in the kernel of ${\rm gr}_w\pi$ if and only if $w(h(u))<\nu (\pi(h(u)))$.
\begin{remark}\label{HS}1) After giving this result its name, I realized that Cohen's structure theorem had roots in valuation theory. It is an analogue for complete noetherian local rings of a structure theorem for complete valued fields due to H. Hasse and F.K. Schmidt. See \cite[Section 4.3]{Ro}. However, this result was itself inspired by a conjecture of Krull in commutative algebra; see \cite[Introduction]{Co}.\par\medskip\noindent
2) The sum $\sigma=\sum u^m$ of \textit{all} monomials is an element of $\widehat{k[(u_i)_{i\in I}]}$ and can be deemed to represent the transfinite product $\prod_{i\in I}(\frac{1}{1-u_i})$ since in the expansion of $\frac{1}{1-u_i}$ each power of $u_i$ appears exactly once and with coefficient $1$.\par\noindent If we consider the canonical map of topological $k$-algebras $\widehat{k[(u_i)_{i\in I}]}\to k[[t^\Gamma]]$ determined by $u_i\mapsto t^{\gamma_i}$ defined in \cite[introduction to section 4]{Te3}, by construction a monomial of weight $\gamma$ in the variables $u_i$ encodes a representation of $\gamma$ as a combination of the generators $\gamma_i$ so that the image of the series $\sigma$ is the generalized generating series $\sum_{\gamma\in\Gamma} p(\gamma) t^\gamma\in k[[t^\Gamma]]$ where $\gamma\mapsto p(\gamma)\in\N$ is a partition function: the number of distinct ways of writing $\gamma$ as a combination with coefficients in $\N$ of the generators $\gamma_i$. We see that it satisfies a generalized version of Euler's identity for the generating series of partitions: $$\sum_{\gamma\in\Gamma} p(\gamma) t^\gamma =\prod_{i\in I}(\frac{1}{1-t^{\gamma_i}}).$$\par\noindent  We observe that the semigroup $\Phi_{\geq 0}$ is itself well ordered and combinatorially finite if and only if its approximation sequence 
$$\N^r_0\subset \N^r_1\subset \cdots \subset \N^r_h\subset \N^r_{h+1}\subset\cdots\Phi_{\geq 0}$$by nested free subsemigoups $\N^r$, where $r$ is the rational rank of $\Phi$,  following from \cite[Proposition 2.3]{Te3} is finite. Indeed, if it is infinite there must be infinite decreasing sequences of elements of $\Phi_{\geq 0}$ because if the linear inclusion $\N_h^r\subset \N_{h+1}^r\subset \Phi_{\geq 0}$, encoded by a matrix with entries in $\N$, is not an equality, there has to be a basis element of $\N_{h+1}^r$ which is strictly smaller than the minimum of elements of $\N_h^r$. Conversely, there can be no infinite decreasing sequence of elements of $\N^r$ with a monomial ordering where all elements are $\geq 0$.\par Therefore, if $\Phi_{\geq 0}$ is well ordered, we have $\Phi_{\geq 0}=\N^r$ where $r$ is the rational rank of $\Phi$. The same construction as above can be applied when taking variables $u_i$ indexed by all the elements of $\N^r$ instead of only generators and taking $r=1$ we recover the usual partition function $p(n)$ and the usual Euler identity.\par\noindent 
The series $\sum_{\gamma\in\Gamma} p(\gamma) t^\gamma $ can also be viewed as the generalized Hilbert-Poincar\'e series of the graded algebra ${\rm gr}_w\widehat{k[(u_i)_{i\in I}]}$ since $p(\gamma)$ is the dimension of the $k$-vector space generated by the monomials of degree $\gamma$. We note that the Hilbert-Poincar\'e series of ${\rm gr}_\nu k[[t^\Gamma]]=k[t^\Gamma]$ is again, up to a change in the meaning of the letter $t$, the sum $\sum_{\gamma\in\Gamma}t^\gamma$ of all monomials.
\end{remark}
\end{subsection}
%______________________________
\begin{subsection}{The valuative Chevalley Theorem}
In this section we dig a little deeper into the relationship between the valuative Cohen theorem and Chevalley's theorem (see \cite{Te1}, section 5, and \cite{B}, Chap. IV, \S 2, No. 5, Cor.4), which is essential in its proof. Let $R$ be a complete equicharacteristic noetherian local domain and let $\nu$ be a rational valuation centered in $R$. Let us take as set $(\gamma_i)_{i\in I}$ the entire semigroup $\Gamma$. If the valuation $\nu$ is of rank one, this set is of ordinal $\omega$ (see \cite[Appendix 3, Proposition 2]{Z-S}) and cofinal in $\Phi_{\geq 0}$ since by \cite[Theorem 3.2 and \S 4]{C-T} the semigroup $\Gamma$ has no accumulation point in $\R$ because we assume that $R$ is noetherian. Since we have $\bigcap_{\gamma\in \Gamma}\Pp_{\gamma}=(0)$, it is an immediate consequence of Chevalley's theorem that given a sequence of elements of $R$ of strictly increasing valuations, their $m$-adic orders tend to infinity. This is no longer true for valuations of rank $>1$ as evidenced by the following:
\begin{example}\label{nochev} Let $R=k[[x,y]]$ equipped with the monomial rank two valuation $\mu$ with value group $(\Z^2)_{lex}$ such that $\mu(x)=(1,0)$ and $\mu(y)=(0,1)$. The elements $(x+y^i)_{i\geq 1}$ have strictly increasing values $(0,i)$ but their $m$-adic order remains equal to $1$. \end{example}
This is the reason why in the valuative Cohen theorem the representatives of generators of the associated graded algebra have to be chosen. In this section we present some consequences of this choice.\par\medskip
Let $(\gamma_i)_{i\in I}$ be a well ordered set contained, with the induced order, in the non negative semigroup of a totally ordered group $\Phi$ of finite rational rank. By the theorem of B. Neumann in \cite{N} which we have already quoted, the semigroup $\Gamma$ generated by those elements is well ordered. Let
$$(0)=\Psi_h\subset \Psi_{h-1}\subset\cdots\subset\Psi_1\subset\Psi_0=\Phi$$
be the sequence of convex subgroups of $\Phi$, where $h$ is the rank of $\Phi$ and $\Psi_k$ is of rank $h-k$. Let us consider the canonical map $\lambda\colon \Phi\rightarrow \Phi/\Psi_{h-1}$. 
With these notations we can state the following inductive definition:
\begin{definition}\label{definit} A subset $B$ of a well ordered subset $(\gamma_i)_{i\in I}$ of $\Phi_{\geq 0}$ is said to be \textit{initial in $(\gamma_i)_{i\in I}$} when
 \begin{itemize} 
\item If $h=1$, then $B$ is of the form $\{ \gamma_i\vert i\leq i_0\}$.
\item If $h>1$, then $\lambda(B)$ is initial in the set $(\lambda(\gamma_i))_{i\in I}\subset  \Phi/\Psi_{h-1}$ of the images of the $\gamma_i$ and for every $\phi_{h-1}\in \lambda(B)$ we have that the set of differences $B\cap\lambda^{-1}(\phi_{h-1})-\tilde\phi_{h-1}$ is initial in the set of differences $\{\gamma_i)_{i\in I}\}\cap\lambda^{-1}(\phi_{h-1})-\tilde\phi_{h-1}\subset (\Psi_{h-1})_{\geq 0}$, where $\tilde\phi_{h-1}$ is the smallest $\gamma_i$ contained in $\lambda^{-1}(\phi_{h-1})$.
\end{itemize}
\end{definition}
 One verifies by induction on the rank that the intersection of initial subsets is initial, so that we have an initial closure of a subset $C$ of $(\gamma_i)_{i\in I}$, the intersection of the initial subsets containing $C$.
 \begin{remark}\label{plusone} An initial subset $B$ has the property that if $\gamma_i\notin B$, then $\gamma_{i+1}\notin B$, where as usual $i+1$ is the successor of $i$ in $I$. The converse is false when the rank of the valuation is $>1$ and there are elements without predecessor in $(\gamma_i)_{i\in I}$. It is not true either, if the valuation is of rank $>1$, that if $\gamma_i\in B$ and $\gamma_j<\gamma_i$, then $\gamma_j\in B$ or that if $\gamma_j\notin B$ and $\gamma_k>\gamma_j$, then $\gamma_k\notin B$.\par The useful features of initial sets are what comes now and especially the valuative Chevalley Theorem below which shows that finite initial subsets provide a way of approximating the countable ordinal $I$ indexing the generators of $\Gamma$ by nested finite subsets which is appropriate for the valuative Cohen Theorem.
 \end{remark}
 
 \textit{To simplify notations, we shall sometimes identify each $\gamma_i$ with its index $i$, as in the proof below.}
\begin{proposition}\label{initial}Let the $(\gamma_i)_{i\in I}$ be as above. The initial closure of a finite subset of $(\gamma_i)_{i\in I}$ is finite.
\end{proposition}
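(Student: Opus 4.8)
The plan is to argue by induction on the rank $h$ of the group $\Phi$, following the recursive shape of Definition \ref{definit} itself. In the base case $h=1$ a subset is initial exactly when it is an initial segment $\{\gamma_i\mid i\le i_0\}$ of the well-ordered set $(\gamma_i)_{i\in I}$; given a finite subset $C$, letting $i_0$ be the largest index occurring among its elements, the smallest initial set containing $C$ is $\{\gamma_i\mid i\le i_0\}$, and since in this rank-one situation the indexing ordinal of $(\gamma_i)_{i\in I}$ is at most $\omega$ (this is where the standing assumptions enter, via \cite[Appendix 3, Proposition 2]{Z-S}), one has $i_0<\omega$ and the set is finite. This rank-one statement is the only genuinely hypothesis-dependent input of the whole argument.

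For the inductive step, suppose $h>1$ and let $\lambda\colon\Phi\to\Phi/\Psi_{h-1}$ be the projection modulo the smallest non-trivial convex subgroup; it is order-preserving, so $\{\lambda(\gamma_i)\}_{i\in I}$ is a well-ordered subset of the positive cone of the rank-$(h-1)$ group $\Phi/\Psi_{h-1}$. Fix a finite $C\subseteq(\gamma_i)_{i\in I}$. The set $\lambda(C)$ is finite, so by the induction hypothesis its initial closure $D=\{\delta_1,\dots,\delta_p\}$ in $\{\lambda(\gamma_i)\}$ is finite; moreover the $\lambda$-image of any initial set containing $C$ is again initial and contains $\lambda(C)$, hence contains $D$, so it is $D$ rather than $\lambda(C)$ that one must work with. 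The plan is now to build an explicit finite initial set containing $C$ out of the \emph{finitely many} fibres $F_j=\{\gamma_i\mid\lambda(\gamma_i)=\delta_j\}$, $1\le j\le p$. Each $F_j$ lies in a coset of the rank-one group $\Psi_{h-1}$; writing $\tilde\delta_j=\min F_j$, set $M_j=\max\bigl((C\cap F_j)-\tilde\delta_j\bigr)$ if $C\cap F_j\ne\emptyset$ and $M_j=0$ otherwise, and form
$$S_j=\tilde\delta_j+\{\,x\in F_j-\tilde\delta_j\mid x\le M_j\,\},\qquad S=\bigcup_{j=1}^{p}S_j.$$
Each $S_j$ is an initial segment of a well-ordered subset of a rank-one positive cone, cut off at an element coming from the finite set $C$, hence finite by the rank-one case; so $S$ is finite and contains $C$.

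Finally I would check that $S$ is itself initial, which by the very definition of the initial closure gives $\overline{C}\subseteq S$ and finishes the proof. The verification is directly by Definition \ref{definit}: since each $S_j$ contains $\tilde\delta_j$ one gets $\lambda(S)=D$, which is initial in $\{\lambda(\gamma_i)\}$; and for every $\delta_j\in\lambda(S)$ the set $S\cap\lambda^{-1}(\delta_j)-\tilde\delta_j=\{x\in F_j-\tilde\delta_j\mid x\le M_j\}$ is an initial segment of $F_j-\tilde\delta_j$, hence initial in the rank-one sense inside $\Psi_{h-1}$. \textbf{The point that needs care} is exactly this last step: one must remember to throw the minimal fibre elements $\tilde\delta_j$ into $S$, so that $\lambda(S)$ is the full base closure $D$ and not merely $\lambda(C)$ — otherwise the first clause of Definition \ref{definit} fails. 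Everything else is bookkeeping with the fibre decomposition, and the real engine of the proof is the induction hypothesis applied to the rank-$(h-1)$ quotient, which is what confines the initial closure of $C$ to finitely many fibres.
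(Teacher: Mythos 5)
Your construction is essentially the paper's: induct on the rank, push $C$ down to the rank-$(h-1)$ quotient, take the (finite, by induction) initial closure $D$ there, and then truncate each of the finitely many fibres over $D$ at the largest element of $C$ lying in it. Your insistence on adjoining the minimal fibre elements $\tilde\delta_j$ for the fibres over $D\setminus\lambda(C)$ is a correct point of care (the paper's fibre sets $\tilde C_{i_1}$ are left undefined, or empty, for such fibres, and your version is what makes the first clause of Definition \ref{definit} checkable), and your verification that $S$ is initial and contains $C$, hence contains the initial closure, is fine.

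The gap is the finiteness of each truncated fibre $S_j$, which you dismiss with ``hence finite by the rank-one case''. The rank-one base case, as you yourself set it up, is not an order-theoretic fact about well-ordered subsets of a rank-one positive cone: it rests on the ordinal of $I$ being at most $\omega$, an input coming from \cite[Appendix 3, Proposition 2]{Z-S} about value semigroups of valuations on noetherian local domains. A general well-ordered subset of a rank-one cone can have order type $\omega+1$ (for instance $\{1-1/n\}_{n\geq 1}\cup\{1\}$ in $\R_{\geq 0}$), and then the set of its elements bounded by one of its elements is infinite; so ``the rank-one case'' does not apply as such to the fibre difference sets $F_j-\tilde\delta_j\subset(\Psi_{h-1})_{\geq 0}$, which are not the data to which your hypothesis-dependent base case was addressed, and nothing in your argument shows they inherit the order-type bound. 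This is exactly the step at which the paper invokes \cite[Appendix 3, Lemma 4]{Z-S} or \cite[Proposition 3.17]{Te1}, which assert that for the sets actually arising here (generators of the value semigroup of a rational valuation centered in a noetherian local domain, lying in a fibre of $\lambda$) only finitely many elements lie below a given one. Your proof becomes complete once the appeal to ``the rank-one case'' at this point is replaced by that valuation-theoretic finiteness; as written, the inductive step does not follow from what you have proved.
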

\begin{proof} This is a variant of the proof of lemma 5.58 of \cite{Te1}. Let $C$ be a finite subset of $I$. If the valuation is of rank one the ordinal of $I$ is at most $\omega$ (see \cite[Appendix 3, Proposition 2]{Z-S}) and the initial closure of $C$ is the set of elements of $I$ which are less than or equal to the largest element of $C$, and it is finite. Assume now that the result is true for valuations of rank $\leq h-1$ and let $\nu$ be a valuation of rank $h$. Let $\lambda\colon \Phi\to\Phi_{h-1}=\Phi/\Psi_{h-1}$ be the map of groups
corresponding to the valuation $\nu_{h-1}$ of height $h-1$ with which $\nu$ is composed.  Set $C_1=\lambda (C)$ and let $I_1$ index the distinct $(\lambda(\gamma_i))_{i\in I}$. We have a natural monotone map $I\to I_1$, which we still denote by $\lambda$. By the induction hypothesis, we have a finite set $\tilde C_1$ containing $C_1$ and initial in  $I_1$. Define $\tilde C$ as follows: it is the union of finite subsets $\tilde C_{i_1}$ of the $\lambda^{-1}(i_1)$ for $i_1\in \tilde C_1$. Define $\tilde C_{i_1}$ to be the set of elements of $\lambda^{-1}(i_1)$ which are smaller than or equal to the largest element of $C\cap\lambda^{-1}(i_1)$. By \textit{loc. cit.},
Lemma 4, or Proposition 3.17 of \cite{Te1}, each of these sets if finite and $\tilde C=\bigcup_{i_1\in \tilde C_1}\tilde C_{i_1}$ is the initial closure of $C$. \end{proof}
\begin{corollary}\label{approx} Given a finite set $B_0\subset I$, there is a countable sequence $(B_t)_{t\in \N_{>0}}$ of nested finite initial subsets containing $B_0$ whose union is $I$:
$$B_0\subset B_1\subset \cdots\subset B_t\subset B_{t+1}\subset \cdots\subset I$$
\end{corollary}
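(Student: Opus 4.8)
The plan is to combine the finiteness of initial closures (Proposition \ref{initial}) with the countability of $I$, the latter being guaranteed because $R$ is noetherian: as recalled in subsection \ref{reminder}, the value semigroup $\Gamma=\nu(R\setminus\{0\})$ is countable, hence so is its minimal generating set, i.e. the indexing ordinal $I$. First I would fix a surjection $\N\to I$, $n\mapsto j_n$ (just a bijection of $I$ with $\N$, not the well-ordering of $I$).

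Next I would construct the sequence by recursion. For a finite subset $C\subset I$ the initial closure of $C$ is well defined — the family of initial subsets of $(\gamma_i)_{i\in I}$ contains $I$ itself and is stable under intersection, as observed right after Definition \ref{definit} — and it is again an initial subset, which is finite by Proposition \ref{initial}. So set $B_1$ to be the initial closure of $B_0\cup\{j_0\}$; it is finite, initial, and contains $B_0$. Having defined a finite initial subset $B_t\supseteq B_0$, let $B_{t+1}$ be the initial closure of $B_t\cup\{j_t\}$, which is again finite and initial. By construction $B_t\subseteq B_t\cup\{j_t\}\subseteq B_{t+1}$, so the $B_t$ are nested, and each of them contains $B_0$.

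Finally I would verify that their union is all of $I$: for every $n\in\N$ the element $j_n$ lies in $B_{n+1}$ by construction, hence $\bigcup_{t\geq 1}B_t\supseteq\{j_n: n\in\N\}=I$, and the reverse inclusion is clear. This gives the asserted chain $B_0\subset B_1\subset\cdots\subset B_t\subset B_{t+1}\subset\cdots$ with union $I$.

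I do not expect any genuine obstacle here: all the substance sits in Proposition \ref{initial}, which bounds the initial closure of a finite set, and in the countability of $I$, which is exactly the feature furnished by the noetherianity hypothesis on $R$ and allows a single $\N$-indexed sequence of finite initial sets to exhaust $I$. The only points worth spelling out are that the initial closure operator is well defined and takes initial values (a closure system argument), and the bookkeeping that the recursion indeed captures every $j_n$.
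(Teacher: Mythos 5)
Your argument is correct, but it follows a genuinely different route from the paper. You reduce everything to two inputs: the countability of $I$ (an enumeration $n\mapsto j_n$) and Proposition \ref{initial}, defining $B_{t+1}$ as the initial closure of $B_t\cup\{j_t\}$, so that exhaustion of $I$ is immediate from the bookkeeping. The paper instead constructs the $B_t$ intrinsically, without choosing an enumeration: it works down the chain of quotients $\Phi\to\Phi/\Psi_i$, at each level adjoining to the image of the current set the smallest element of the image of $I$ not yet present and taking initial closures fibre by fibre, and then proves $\bigcup_t B_t=I$ by induction on the rank, using cofinality of increasing sequences in the rank-one case. Your approach is shorter and pushes all the substance into Proposition \ref{initial}; its cost is the (harmless) dependence on a choice of enumeration and on countability of $I$ --- which indeed holds, either as you say via noetherianity of $R$ (countability of $\Gamma$), or, in the general setting of this subsection where only $(\gamma_i)_{i\in I}\subset\Phi_{\geq 0}$ with $\Phi$ of finite rational rank is assumed, because such a $\Phi$ embeds as a group in $\Q^r$ and is therefore countable. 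The paper's construction, by contrast, is canonical and displays how the $B_t$ grow compatibly with the convex subgroups, which is in the spirit of how these sets are used afterwards. One small caution: your parenthetical claim that $I$ is itself an initial subset is not needed and is not literally true under Definition \ref{definit} (in rank one an initial set has the form $\{\gamma_i\mid i\leq i_0\}$, which excludes an infinite $I$ without largest element); what you actually need --- that a finite subset admits a finite initial superset, so its initial closure exists, is initial, and is finite --- is exactly what Proposition \ref{initial} and the stability of initial sets under intersection provide.
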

\begin{proof} Consider the sequence of morphisms
$$\Phi\stackrel{\lambda_{h-1}}\longrightarrow \Phi/\Psi_{h-1}\stackrel{\lambda_{h-2}}\longrightarrow \Phi/\Psi_{h-2}\cdots\cdots\stackrel{\lambda_{2}}\longrightarrow \Phi/\Psi_2\stackrel{\lambda_{1}}\longrightarrow \Phi/\Psi_1\longrightarrow 0.$$
Denote by $\mu_i$ the compositum $\lambda_i\circ\lambda_{i-1}\circ\cdots\circ\lambda_{h-1}\colon\Phi\to\Phi/\Psi_i.$ Let $B_{0,1}\subset \Phi/\Psi_1$ be the initial closure of the union of the image of $B_0$ in $\Phi/\Psi_1$ and the smallest element of the image of $I$ in $\Phi/\Psi_1$ which is not in that image. For each $\phi_1\in B_{01}$ take the initial closure in $\lambda_{1}^{-1}(\phi_1)$ of the union of \break$\lambda_{1}^{-1}(\phi_1)\cap\mu_2(B_0)$ and the smallest element of $\lambda_{1}^{-1}(\phi_1)\cap\mu_2(I)$ which is not in $\lambda_{1}^{-1}(\phi_1)\cap\mu_2(B_0)$. If $\phi_1\notin\mu_1(B_0)$ take the smallest element of $\lambda_{1}^{-1}(\phi_1)\cap\mu_2(I)$. The union of the results of this construction is a finite initial subset $B_{02}$ of $\mu_2(I)$ which contains $\mu_2(B_0)$. \par\noindent
We repeat the same construction starting from $B_{02}\subset\mu_2(I)$ to build a finite initial subset $B_{03}$ of $\mu_3(I)$, and so on until we have created a finite initial subset $B_1$ of $I$ which strictly contains $B_0$ unless $B_0=I$. Then we apply the same construction replacing $B_0$ by $B_1$ to obtain $B_2$, and so on.\par
  Let us now prove that the union of the $B_t$ is $I$. We keep the notations of Definition \ref{definit} and Proposition \ref{initial}, where $\lambda_{h-1}=\lambda$. The result is true if $\Phi$ is of rank one because a strictly increasing sequence of elements of $I$ is cofinal in $I$ (see  \cite[Theorem 3.2]{C-T}). Let us assume that it is true for groups of rank $\leq h-1$ and assume that the result is not true for rank $h$. Let $\iota$ be the smallest element of $I$ which is not in $\bigcup_{t\in\N} B_t$. Using our induction hypothesis, define $t_0$ as the least $t$ such that $\lambda(\iota)\in \lambda (B_t)$. Let $\tilde\phi_\iota$ be the smallest element of $I\cap\lambda^{-1}(\lambda(\iota))$. By the definition of initial closure, for $t\geq t_0$ the elements of each set of differences $B_t\cap\lambda^{-1}(\lambda(\iota))-\tilde\phi_\iota$ are initial in $I\cap \lambda^{-1}(\lambda(\iota))-\tilde\phi_\iota\subset \Psi_{h-1}$. We are reduced to the rank one case and since by construction the $B_t\cap\lambda^{-1}(\lambda(\iota))$ grow with $t$ we obtain that for large enough $t$ we have $\iota\in B_t\cap\lambda^{-1}(\lambda(\iota))$ and a contradiction.
\end{proof}
\begin{theorem}\label{Chevalley}{{\rm (The valuative Chevalley theorem)}} Let $R$ be a complete equicharacteristic noetherian local domain and let $\nu$ be a rational valuation of $R$. Denote by $\Gamma$ the semigroup of values of $\nu$ and by $(\gamma_i)_{i\in I}$ a minimal set of generators of $\Gamma$. Assume that the set $I$ is infinite and let $D$ be an integer. Let $B_0$ be a finite subset of $I$. There exist elements $(\xi_i)_{i\in I}$ in $R$ such that $\nu(\xi_i)=\gamma_i$ and a finite initial subset $C(D)$ of $(\gamma_i)_{i\in I}$ containing $B_0$ and such that whenever $\gamma_i\notin C(D)$, then $\xi_i\in m^{D+1}$. 
 \end{theorem}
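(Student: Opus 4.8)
The plan is to reduce the statement to a finiteness assertion about how many generators can ``fail'', and then to invoke Proposition \ref{initial}. For the fixed integer $D$, call an index $i\in I$ \emph{$D$-bad} if there is no $x\in m^{D+1}$ with $\nu(x)=\gamma_i$; equivalently, if $\Pp_{\gamma_i}(R)\cap m^{D+1}\subseteq \Pp^+_{\gamma_i}(R)$. I claim the set $\mathrm{Bad}(D)$ of $D$-bad indices is finite. Granting this, I take $C(D)$ to be the initial closure of the finite set $\mathrm{Bad}(D)\cup B_0$: it is initial by construction, it contains $B_0$, and it is finite by Proposition \ref{initial}. For $i\notin C(D)$ the index $i$ is not $D$-bad, so I may choose $\xi_i\in m^{D+1}$ with $\nu(\xi_i)=\gamma_i$; for the finitely many remaining $i$ I choose any $\xi_i\in \Pp_{\gamma_i}(R)\setminus\Pp^+_{\gamma_i}(R)$, a non-empty set since $\gamma_i\in\Gamma$. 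This family $(\xi_i)_{i\in I}$ then has all the required properties.

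To see that $\mathrm{Bad}(D)$ is finite, the key point is that $R/m^{D+1}$ is a finite dimensional $k$-vector space because $R$ is noetherian. The $\nu$-filtration of $R$ induces on $m^{D+1}$ the filtration by the ideals $\Pp_\phi(R)\cap m^{D+1}$ and on $R/m^{D+1}$ the filtration by the images of the $\Pp_\phi(R)$; since $\Pp^+_\phi(R)\subseteq\Pp_\phi(R)$, the modular law gives the exact sequence
$$0\longrightarrow \bigoplus_\phi \frac{\Pp_\phi(R)\cap m^{D+1}+\Pp^+_\phi(R)}{\Pp^+_\phi(R)}\longrightarrow {\rm gr}_\nu R\longrightarrow {\rm gr}_\nu(R/m^{D+1})\longrightarrow 0,$$
so that the homogeneous component of degree $\phi$ of ${\rm gr}_\nu(R/m^{D+1})$ is $\Pp_\phi(R)/\bigl(\Pp^+_\phi(R)+\Pp_\phi(R)\cap m^{D+1}\bigr)$. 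As any filtration of a finite dimensional vector space by subspaces has an associated graded of the same finite dimension, ${\rm gr}_\nu(R/m^{D+1})$ is finite dimensional; being a quotient of ${\rm gr}_\nu R$ it is zero outside $\Gamma$, so its non-zero homogeneous components occur in only finitely many degrees $\phi_1,\ldots,\phi_N\in\Gamma$.

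Now I use that $\nu$ is rational: each homogeneous component $\Pp_\phi(R)/\Pp^+_\phi(R)$ with $\phi\in\Gamma$ is one dimensional over $k$. Hence if $\gamma_i\notin\{\phi_1,\ldots,\phi_N\}$ then $\Pp_{\gamma_i}(R)=\Pp^+_{\gamma_i}(R)+\Pp_{\gamma_i}(R)\cap m^{D+1}$, and applying the quotient map onto the one dimensional space $\Pp_{\gamma_i}(R)/\Pp^+_{\gamma_i}(R)$ shows that $\Pp_{\gamma_i}(R)\cap m^{D+1}$ is not contained in $\Pp^+_{\gamma_i}(R)$; that is, there is $x\in m^{D+1}$ with $\nu(x)=\gamma_i$, so $i\notin\mathrm{Bad}(D)$. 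Therefore $\mathrm{Bad}(D)\subseteq\{i\in I:\gamma_i\in\{\phi_1,\ldots,\phi_N\}\}$, which is finite since the $\gamma_i$ are pairwise distinct. The only non-formal ingredient in all this is the passage from ``finitely many bad generators'' to a finite \emph{initial} subset $C(D)$, which is precisely Proposition \ref{initial} (resting in the end on the rank one case and the absence of accumulation points of $\Gamma$); so this is where I expect the real work to have been done. Example \ref{nochev} shows why the freedom to re-choose the $\xi_i$ is indispensable here: for a fixed unfortunate choice of representatives the $m$-adic orders need not grow at all.
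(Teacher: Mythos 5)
Your argument is correct as a proof of the statement as written, but it is a genuinely different route from the paper's. You isolate the finiteness of the set of ``$D$-bad'' degrees by observing that $R/m^{D+1}$ has finite $k$-dimension, that the $\nu$-filtration induces on it a chain of subspaces whose associated graded has degree-$\phi$ piece $\Pp_\phi/(\Pp^+_\phi+\Pp_\phi\cap m^{D+1})$ (the modular-law computation is right), and that a chain in a finite-dimensional space can have only finitely many jumps; for every other $\gamma_i$ the nonvanishing of $\Pp_{\gamma_i}/\Pp^+_{\gamma_i}$ then forces an element of value $\gamma_i$ inside $m^{D+1}$, and Proposition \ref{initial} converts the finite bad set into a finite initial $C(D)$. (One small inaccuracy: the associated graded need not have the \emph{same} dimension as $R/m^{D+1}$ when jumps occur at degrees without successor; it has dimension at most that, which is all you use.) The paper instead proves the theorem by induction on the rank of $\nu$, using the specific representatives constructed in the proof of the valuative Cohen theorem, Zariski's comparison of the $\nu_1$-adic and $m$-adic topologies, and the classical Chevalley theorem applied to the filtrations of the $\Pp_\eta/\Pp^+_\eta$. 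What that buys, and what your argument does not give, is that the conclusion holds for a \emph{single} family $(\xi_i)_{i\in I}$ chosen once and for all so that Theorem \ref{cohen} is valid for it: your $\xi_i$ are re-chosen for each $D$ and are not claimed to be representatives for which the valuative Cohen theorem holds, whereas the later applications (Corollary \ref{chev}, Proposition \ref{quot}, Theorem \ref{AbhApprox}) invoke Theorem \ref{Chevalley} with the Cohen representatives already fixed. So your proof settles the literal statement more elementarily, but it would not suffice as stated for the way the theorem is used in the rest of the paper; your closing remark about Example \ref{nochev} correctly identifies why some choice must be made, and the paper's point is that the Cohen-theorem choice can be made to serve both purposes at once.
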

\begin{proof}This is essentially a consequence of the valuative Cohen theorem of \cite{Te3}, or rather of its proof. Recall that the choice of the $\xi_i$ in the proof of the valuative Cohen theorem is such that if the image of $\xi_i$ in a quotient $R/p_q$ is in a power of the maximal ideal, then $\xi_i$ lies in the same power of the maximal ideal in $R$, for all primes $p_q$ which are the centers of valuations with which $\nu$ is composed. So if $p_1$ is the center of the valuation $\nu_1$ of height one with which $\nu$ is composed, we may assume by induction that the result is true for the $\xi_i$ whose value is in the convex subgroup $\Psi_1$ of $\Phi$ associated to $\nu_1$. By a result of Zariski (see \cite[corollary 5.9]{Te1}), we know that the $\nu_1$-adic topology on $R$ is finer than the $m$-adic topology, which means that there exists a $\phi_1\in\Phi_1=\Phi/\Psi_1$ such that if $\nu_1(\xi_i)\geq \phi_1$ then $\xi_i\in m^{D+1}$. So we are interested only in those $\xi_i$ whose $\nu_1$-value is positive and less that $\phi_1$. If $\nu_1(m)>0$, they are finite in number (see \cite[proposition 3.17]{Te1}) so it suffices to add them to the finite set constructed at the previous inductive stage. If not, since $\Phi_1$ is of rank one, there are only finitely many values of the $\nu_1(\xi_i)$ which are positive and less than $\phi_1$. Let  $\eta$ be one of these. Denote by $\lambda\colon\Phi\to\Phi_1$ the natural projection. Again using the argument of the proof of the valuative Cohen theorem, and in particular Chevalley's theorem applied to the filtration of $\Pp_\eta/\Pp^+_\eta$ by the $\Pp_\phi/\Pp^+_\eta$ with $\lambda(\phi)=\eta$, we have that for each $\eta$ there is a $\phi_0(\eta)\in \lambda^{-1}(\eta)$ such that if $\nu(\xi_i)\in\lambda^{-1}(\eta)$ and  $\nu(\xi_i)\geq\phi_0(\eta)$ then $\xi_i\in m^{D+1}$. Again by (\cite{Te1}, proposition 3.17) there are finitely many generators of $\Gamma$ in $\lambda^{-1}(\eta)$ which are $\leq \phi_0(\eta)$. It suffices to add to our finite set the union of these finite sets over the finitely many values $\eta$ that are $\leq \phi_1$. 
 \end{proof}
 \begin{proposition} If the rational valuation $\nu$ is of rank one, Theorem \ref{Chevalley} is equivalent to the existence for each integer $D>0$ of a finite initial set $C(D)$ in $\Gamma$ such that for $\gamma\notin C(D)$ we have $\Pp_\gamma\subset m^{D+1}$.
 \end{proposition}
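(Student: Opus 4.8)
In rank one the set $I$ has order type at most $\omega$ (\cite[Appendix 3, Proposition 2]{Z-S}) and, by \cite[Theorem 3.2]{C-T}, $\Gamma$ has no accumulation point in $\R$; hence both $\Gamma$ and $(\gamma_i)_{i\in I}$ are order isomorphic to an initial segment of $\omega$, ``initial'' simply means ``initial segment'', and a \emph{finite} initial subset of $\Gamma$ is exactly a set $\{\gamma\in\Gamma:\gamma\le\delta\}$ for some $\delta\in\Gamma$. I would prove the two implications separately.

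\emph{From the displayed statement to Theorem~\ref{Chevalley}.} Given $D$ and a finite $B_0\subset I$, let $C(D)\subset\Gamma$ be the finite initial set furnished by the statement and put $\delta=\max C(D)$; choose arbitrary representatives $\xi_i\in\Pp_{\gamma_i}(R)\setminus\Pp^+_{\gamma_i}(R)$, so that $\nu(\xi_i)=\gamma_i$. Let $i_0$ be the largest index $i$ with $\gamma_i\le\delta$ or $\gamma_i\in B_0$, and take $\{\gamma_i:i\le i_0\}$ as the finite initial set required by the theorem; it contains $B_0$, and if $\gamma_j$ lies outside it then $j>i_0$, so $\gamma_j>\delta$, hence $\gamma_j\notin C(D)$ and $\xi_j\in\Pp_{\gamma_j}\subseteq m^{D+1}$. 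The only content of this direction is the (in rank one trivial) translation between initial subsets of $\Gamma$ and of $(\gamma_i)_{i\in I}$.

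\emph{From Theorem~\ref{Chevalley} to the displayed statement.} Fix $D$ and apply the theorem with $B_0=\{0\}$: it produces representatives $\xi_i$ with $\nu(\xi_i)=\gamma_i$ and a finite initial $C'=\{\gamma_i:i\le i_1\}$ containing $\gamma_0$ such that $\gamma_i\notin C'$ implies $\xi_i\in m^{D+1}$; put $M=\gamma_{i_1}>0$. Since $\nu$ is rational, every homogeneous component of ${\rm gr}_\nu R$ is one-dimensional over $k$ (Proposition~\ref{smgrp}), so the nonzero elements $\overline\xi_i={\rm in}_\nu\xi_i$, whose degrees are the minimal generators $\gamma_i$ of $\Gamma$, generate the $k$-algebra ${\rm gr}_\nu R$. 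The crux is the claim that $\Pp_\gamma\subseteq m^{D+1}$ whenever $\gamma\in\Gamma$ and $\gamma>(D+1)M$: granting it, $C(D):=\{\gamma\in\Gamma:\gamma\le(D+1)M\}$ is finite (no accumulation points), initial, and does the job. To prove the claim, take $y\in\Pp_\gamma$ and peel off initial forms: a nonzero remainder $z$ satisfies $\nu(z)\ge\gamma$ and, each graded piece of ${\rm gr}_\nu R$ being one-dimensional and spanned by the class of a monomial in the $\xi_i$, has ${\rm in}_\nu z=c\,[\xi^e]$ with $c\in k^*$ and $\sum_i e_i\gamma_i=\nu(z)$; replacing $z$ by $z-c\,\xi^e$ strictly increases the valuation. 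The subtracted term $c\,\xi^e$ lies in $m^{D+1}$: if the monomial $u^e$ involves a variable $u_i$ with $\gamma_i\notin C'$ then $\xi^e\in\xi_iR\subseteq m^{D+1}$; otherwise $u^e$ is built only from the finitely many $\xi_i$ with $\gamma_i\le M$, so $\sum_i e_i\ge\bigl(\sum_i e_i\gamma_i\bigr)/M>D+1$ and $\xi^e\in m^{\sum_i e_i}\subseteq m^{D+1}$. The valuations of the successive remainders strictly increase, hence tend to infinity (rank one), so by Chevalley's theorem --- using $\bigcap_{\gamma}\Pp_\gamma=(0)$, exactly as recalled at the start of this section --- their $m$-adic orders tend to infinity; since $R$ is complete and $m^{D+1}$ is closed, $y$ is the sum of the subtracted terms and lies in $m^{D+1}$.

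The step I expect to be the main obstacle is this last claim, i.e.\ controlling \emph{all} of $\Pp_\gamma$ rather than just the chosen generators $\xi_i$: one must observe that every element of $\Pp_\gamma$ is, modulo an error of strictly larger valuation, a $k$-combination of monomials in the $\xi_i$, and that each such monomial of weight $>(D+1)M$ necessarily falls into $m^{D+1}$ --- either by divisibility by a ``deep'' generator, or, being assembled from the finitely many ``shallow'' ones whose values are $\le M$, by having total degree exceeding $D+1$. Convergence of the peeling process is then exactly the rank-one Chevalley statement together with the completeness of $R$. (One could equivalently route this computation through the surjection $\widehat{k[(u_i)_{i\in I}]}\to R$ of the valuative Cohen theorem, Theorem~\ref{cohen}.)
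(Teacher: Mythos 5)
Your proof is correct and follows essentially the same route as the paper: the key direction is the dichotomy that a monomial $\xi^e$ of large value either involves a ``deep'' generator already in $m^{D+1}$ or is built from the finitely many ``shallow'' ones and is then forced to have total degree $>D$, exactly as in the paper (with the bound $(D+1)M$ in place of the paper's $D\gamma_s$). The only real difference is that where the paper simply recalls that $\Pp_\gamma$ is generated by the monomials $\xi^e$ of value $\geq\gamma$, you reprove this on the fly by the peeling argument (one-dimensional graded pieces, strictly increasing values with no accumulation point, classical Chevalley, and closedness of $m^{D+1}$), which is a legitimate expansion of the same mechanism; note only that the paper also disposes of the case where $I$ is finite by a separate one-line remark.
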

 \begin{proof} If the set $I$ is infinite, the $\gamma_i$ are cofinal in $\Phi_{\geq 0}$ since by \cite[Theorem 3.2 and \S 4]{C-T} the semigroup $\Gamma$ has no accumulation point in $\R$ because we assume that $R$ is noetherian. Therefore we have $\bigcap_{i\in I}\Pp_{\gamma_i}=(0)$ where the ordinal of $I$ is $\omega$. By Chevalley's theorem (see \cite[Chap. IV, \S 2, No. 5, Cor. 4]{B}), all but finitely many of the $\xi_i$ are in $m^{D+1}$. Conversely, assuming the result of Theorem \ref{Chevalley}, recall that $\Pp_\gamma$ is generated by the monomials $\xi^e$ of value $\geq \gamma$ and let $s\in I$ be the smallest index such that $t\geq s$ implies that $\xi_t\in m^{D+1}$. Let $(\gamma_i)_{i\in J}$ be the finite set of those $\gamma_j<\gamma_s$. For any monomial $\xi^e$ such that some decomposition of $\nu(\xi^e)$ as sum of $\gamma_i$ contains a $\gamma_t$ with $t\geq s$ we have $\xi^e\in m^{D+1}$. Let $C(D)$ be the set of elements of $\Gamma$ which are $\leq D\gamma_s$. If $\nu(\xi^e)=\gamma >D\gamma_s$, either its decomposition along the $\gamma_i$ contains a $\gamma_t$ with $t\geq s$ or it can be written $\gamma=\sum_{i\in J}a_i\gamma_i$ with $a_i\in \N$ and then since $(\sum_{i\in J}a_i){\rm sup}(\gamma_i)_{i\in J}> D\gamma_s$ we must have $(\sum_{i\in J}a_i)\geq D+1$ and thus $\xi^e\in m^{D+1}$ since the $\xi_i$ are in $m$.\par\noindent
 If the set $I$ is finite, taking as initial set $C(D)$ the set of elements of $\Gamma$ which are $\leq D\gamma_s$, where now $\gamma_s$ is the largest of the $\gamma_i$, the last argument of the proof shows that $\Pp_\gamma\subset m^{D+1}$ for $\gamma\notin C(D)$. \end{proof}
 \begin{remark} 
So we see that in the rank one case Theorem \ref{Chevalley} is indeed equivalent to the result given by Chevalley's theorem.  \end{remark}
 \begin{corollary}\label{chev} In the situation of the valuative Cohen theorem, given an infinite collection of monomials $(u^{q_\ell})_{\ell\in L }$ such that each monomial appears only finitely many times, for any integer $D$ all but finitely many of the $\xi^{q_\ell}$ are in $m^{D+1}$. 
\end{corollary}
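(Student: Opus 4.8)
The plan is to separate the monomials $u^{q_\ell}$ according to their total degree $d_\ell=\sum_i q_{\ell,i}$ and to use the valuative Chevalley Theorem \ref{Chevalley} to handle the remaining bounded ones. First I would record that, since $\nu$ is centered in $R$ and every $\gamma_i=\nu(\xi_i)$ is a positive element of $\Phi$, each $\xi_i$ lies in $m$ (as $\xi_i\in m_\nu\cap R=m$); hence for every $\ell$ we have $\xi^{q_\ell}\in m^{d_\ell}$. Consequently, if $d_\ell\geq D+1$ then already $\xi^{q_\ell}\in m^{D+1}$, so the only indices $\ell$ that could possibly violate the conclusion are those with $d_\ell\leq D$.

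Next I would invoke Theorem \ref{Chevalley}, applied with $B_0=\emptyset$ (any finite $B_0$ would do): there is a finite initial subset $C(D)$ of $(\gamma_i)_{i\in I}$ such that $\gamma_i\notin C(D)$ implies $\xi_i\in m^{D+1}$. Now if a monomial $u^{q_\ell}$ with $d_\ell\leq D$ involves some variable $u_i$ with $\gamma_i\notin C(D)$, then $\xi^{q_\ell}$ is a multiple of $\xi_i$ by an element of $R$, so again $\xi^{q_\ell}\in m^{D+1}$. Therefore any $\ell$ with $\xi^{q_\ell}\notin m^{D+1}$ satisfies both $d_\ell\leq D$ and $\mathrm{supp}(q_\ell)\subseteq\{\,i\in I : \gamma_i\in C(D)\,\}$, a fixed finite set of variables; and there are only finitely many monomials of total degree $\leq D$ in finitely many variables.

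Finally I would bring in the hypothesis that each monomial occurs only finitely many times in the family $(u^{q_\ell})_{\ell\in L}$: since the bad indices $\ell$ all have $u^{q_\ell}$ in a finite list of monomials, and each entry of that list is attained for only finitely many $\ell$, the set of bad indices is finite, which is the assertion. This settles the case where $I$ is infinite; when $I$ is finite the ring $\widehat{k[(u_i)_{i\in I}]}$ is an ordinary power series ring in finitely many variables, an infinite family of monomials with finite multiplicities necessarily has unbounded total degree, and the first paragraph alone finishes the argument.

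I do not expect a real obstacle here. The only point that requires care is the bookkeeping that isolates a finite "dangerous" set of monomials \emph{before} the finite-multiplicity hypothesis is used: the decisive observation is that passing to monomials of total degree $\leq D$ is exactly what makes the finiteness of $C(D)$ bite, after which both the degree bound and the support bound together confine the offending exponents to a finite set.
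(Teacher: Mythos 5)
Your argument is correct and is essentially the paper's own proof: both use the finite initial set $C(D)$ from the valuative Chevalley Theorem to force $\xi^{q_\ell}\in m^{D+1}$ whenever the monomial involves a variable outside $C(D)$, use $\xi_i\in m$ to handle total degree $>D$, and then apply the finite-multiplicity hypothesis to the finitely many remaining monomials in finitely many variables of degree $\leq D$. Your explicit treatment of the finite-$I$ case is a minor addition but changes nothing of substance.
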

\begin{proof} Set $C=C(D)$ and let us consider those monomials $u^{q_\ell}$ which involve only the finitely many variables $u_i$ with $i\in C$. If there are finitely many such monomials, we are done. Otherwise, since each monomial appears only finitely many times, and we are now dealing with finitely many variables, there are only finitely many of those monomials which are such that $\vert q_\ell\vert\leq D$ and we are done again since the $\xi_i\in m$ and by construction all the other monomials are in $m^{D+1}$.
\end{proof}
\begin{remark}Since each monomial appears only finitely many times, the sum $\sum_{\ell\in L}u^{q_\ell}$ exists in $\widehat{k[(u_i)_{i\in I}]}$ and therefore its image $\sum_{\ell\in L}\xi^{q_\ell}$ must converge in $R$, albeit in a transfinite sense. The corollary shows an aspect of this.  \end{remark}
\end{subsection}
%____________________________
\begin{subsection}{Finiteness in the valuative Cohen theorem}\label{finiteness} When the rational valuation $\nu$ on the noetherian complete local domain $R$ is of rank $>1$ and the semigroup $\Gamma$ is not finitely generated, not only does one have to choose the representatives $\xi_i\in R$ of generators of the $k$-algebra ${\rm gr}_\nu R$ carefully (see \cite{Te3}) in order to avoid adding infinitely many times the same element in sums such as $\sum_{i\in I}\xi_i$, but the equations $F_\ell$ whose initial forms are the binomials $(u^{m^\ell}-\lambda_\ell u^{n^\ell})_{\ell\in L},\ \lambda_\ell\in k^*$ also have to be chosen carefully in order to avoid similar accidents when writing elements of the closure in $\widehat{k[(u_i)_{i\in I}]}$ of the ideal which they generate. In this subsection we show how the noetherianity of the local domain $R$ can be used to prove the existence of good choices of equations.
Since the binomials $(U^{m^\ell}-\lambda_\ell U^{n^\ell})_{\ell\in L},\ \lambda_\ell\in k^*$ generate the $w$-initial ideal of the kernel $F$ of the map $\pi\colon \widehat{k[(u_i)_{i\in I}]}\longrightarrow R$, for each $\ell\in L$ there is at least one element of the form $u^{m^\ell}-\lambda_\ell u^{n^\ell}+\sum_{w(p)>w(u^{m^\ell})}c_pu^p$ which is in $F$ (overweight deformation of its initial binomial). Let us call such elements $F_\ell$.
%____________________
\begin{proposition}\label{finite} The elements $F_\ell$ can be chosen, without modifying the initial binomial, in such a way that each one involves only a finite number of variables and no monomial $u^p$ appears in infinitely many of them or in infinitely many products $A_\ell F_\ell$, where the $A_\ell$ are terms such that there are at most finitely many products $A_\ell F_\ell$ of any given weight.\par\noindent
 \end{proposition}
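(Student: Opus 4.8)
The plan runs as follows. Write $b_\ell:=u^{m^\ell}-\lambda_\ell u^{n^\ell}$ for the $\ell$-th initial binomial and $d_\ell:=w(u^{m^\ell})=w(u^{n^\ell})$ for its weight. Since $U^{m^\ell}-\lambda_\ell U^{n^\ell}$ lies in $\ker {\rm gr}_w\pi$ and ${\rm gr}_w\pi(U_i)=\overline\xi_i$, the $\nu$-initial forms of $\xi^{m^\ell}$ and $\lambda_\ell\xi^{n^\ell}$ coincide, so $y_\ell:=\lambda_\ell\xi^{n^\ell}-\xi^{m^\ell}=-\pi(b_\ell)$ has $\nu(y_\ell)>d_\ell$, i.e. $y_\ell\in\Pp^+_{d_\ell}(R)$. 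Consequently, for \emph{any} $G_\ell\in\widehat{k[(u_i)_{i\in I}]}$ with $w(G_\ell)>d_\ell$ and $\pi(G_\ell)=y_\ell$, the series $F_\ell:=b_\ell+G_\ell$ lies in $F=\ker\pi$ and has ${\rm in}_wF_\ell=b_\ell$; so the whole task is to produce such correction terms $G_\ell$ with the extra features required.

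\emph{Finitely many variables.} Here I would invoke the noetherianity of $R$ together with the valuative Chevalley theorem. Apply Theorem \ref{Chevalley} with $D=1$ to get a finite initial set $C_1$ of indices with $\xi_i\in m^2$ for $i\notin C_1$. Since the classes of the $\xi_i$ span $m/m^2$ (one also uses Corollary \ref{chev} here, to control the transfinite expansions), those with $i\in C_1$ already span $m/m^2$, so by Nakayama $m=(\xi_i)_{i\in C_1}R$; as $R$ is then $m$-adically topologically generated over $k$ by those $\xi_i$, the restriction of $\pi$ to $\widehat{k[(u_i)_{i\in C}]}$ is surjective onto $R$ for every finite $C\supseteq C_1$. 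Next, for any value $d\in\Gamma$ the ideal $\Pp^+_d(R)$ is generated by the monomials $\xi^e$ with $\nu(\xi^e)>d$: indeed ${\rm gr}_\nu R$ is generated by the $\overline\xi_i$, so $\Pp_\phi/\Pp^+_\phi$ is spanned by such monomials for every $\phi$, Corollary \ref{chev} makes the resulting transfinite expansions converge in $R$, and ideals of $R$ are closed; whence, $R$ being noetherian, finitely many of these monomials generate it. Writing $y_\ell=\sum_{j=1}^{r_\ell}a_j\xi^{e_j}$ with $a_j\in R$ and $\nu(\xi^{e_j})>d_\ell$, choose a finite set $C_\ell\supseteq C_1$ containing the supports of $m^\ell$, $n^\ell$ and of all the $e_j$, lift each $a_j$ to $\alpha_j\in\widehat{k[(u_i)_{i\in C_\ell}]}$ by the surjectivity just established, and set $G_\ell:=\sum_j\alpha_j u^{e_j}$. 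Then $w(G_\ell)\ge\min_j\nu(\xi^{e_j})>d_\ell$, $\pi(G_\ell)=y_\ell$, and $F_\ell=b_\ell+G_\ell$ involves only the finitely many variables indexed by $C_\ell$.

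\emph{No monomial occurring infinitely often.} For the initial binomials this is automatic: a monomial $u^p$ can coincide with $u^{m^\ell}$ or $u^{n^\ell}$ only when $d_\ell=w(u^p)$, and by combinatorial finiteness of $\Gamma$ there are finitely many binomials of each weight. Moreover every monomial of a product $A_\ell b_\ell$ (resp. $A_\ell G_\ell$) is divisible by $u^{m^\ell}$ or $u^{n^\ell}$ (resp. by some monomial of $G_\ell$), and a given monomial has only finitely many sub-monomials; hence it is enough to arrange that each fixed monomial lies in the support of only finitely many of the $G_\ell$, and this then settles at once the statement for the $F_\ell$ and for all admissible families $(A_\ell F_\ell)$, whatever the $A_\ell$. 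To secure it I would revisit the choice of the $G_\ell$, treating the binomials in order of non-decreasing weight (the weights form a well-ordered, combinatorially finite set) and using at each step the freedom to modify $G_\ell$ by an element of $F\cap\widehat{k[(u_i)_{i\in C_\ell}]}$ of weight $>d_\ell$, so as to keep its support off the finitely many monomials already used at lower weight.

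The main obstacle, I expect, is precisely this last bookkeeping: when $\nu$ has rank $>1$, infinitely many binomials may have weight below a prescribed value, so a naive induction on $L$ collapses at limit stages. One has to organize the recursion by composing $\nu$ with the convex subgroups $\Psi_k$ of $\Phi$ and by using the finite initial subsets and their nested exhaustion from Definition \ref{definit} and Corollary \ref{approx}, reducing the control of each limit stage to the rank-one situation, where cofinality of strictly increasing sequences does the job. The production of the finite-variable equations in the second paragraph is, by comparison, the soft part, once one has extracted from the noetherianity of $R$ that the ideals $\Pp^+_d(R)$ are finitely generated by monomials in the $\xi_i$.
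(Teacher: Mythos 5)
Your first paragraph and the ``finitely many variables'' step are sound and essentially the paper's own argument: noetherianity gives finitely many monomials $\xi^{e_j}$ of value $>d_\ell$ through which the correction term can be channelled, the coefficients are rewritten as series in a finite set of generators of the maximal ideal, and the lift involves only finitely many variables while leaving the initial binomial untouched. (Your detour through Theorem \ref{Chevalley} with $D=1$ is unnecessary --- a finite subset of the $\xi_i$ generating $m$ exists by noetherianity alone --- but harmless.) Likewise your reduction of the clause about the products $A_\ell F_\ell$ to the statement that no monomial lies in the support of infinitely many $F_\ell$ is correct and parallels the paper's closing use of Lemma \ref{appear}, provided you really invoke the hypothesis that there are only finitely many products of a given weight to handle the case where one and the same index $\ell$ recurs infinitely often in the family.

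The genuine gap is the core combinatorial claim itself: that the $G_\ell$ can be chosen so that each monomial occurs in only finitely many of them. You do not prove it; you sketch a forward recursion (``treat the binomials by non-decreasing weight and steer the support of $G_\ell$ away from the finitely many monomials already used'') and then yourself point out that it collapses when the rank is $>1$, since infinitely many binomials, hence infinitely many used monomials, may lie below a given weight; the appeal to convex subgroups and the initial sets of Definition \ref{definit} is only a hope, not an argument. Moreover, even where the bookkeeping is finite, nothing in your construction shows that the support of $G_\ell$ \emph{can} be steered off a prescribed set of monomials: your only freedom is to add elements of $\ker\pi$ of weight $>d_\ell$ (or to change the lifts $\alpha_j$), and there is no reason such modifications can delete an arbitrary monomial. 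The paper's proof works with the opposite mechanism: assume some monomial appears infinitely often in \emph{every} modification of the family by combinations $F_\ell-\mu F_{\ell'}$ preserving the initial binomials, take the \emph{smallest} such monomial $u^{e_0}$ in the well-ordering of monomials, stratify the indices $\ell$ with $u^{e_0}\in\mathrm{supp}\,F_\ell$ into finite layers $L_\iota$ by the weight of their initial binomials, and cancel $u^{e_0}$ from each $F_\ell$ with $\ell\in L_\iota$ by subtracting $\mu_{\ell,\ell'}F_{\ell'}$ with $\ell'\in L_{\iota+1}$ --- cancellation partners of strictly larger initial weight exist precisely \emph{because} infinitely many members contain the same monomial --- and minimality of $u^{e_0}$ prevents the creation of a smaller offending monomial, giving the contradiction. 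Without this (or some equally effective) elimination device, your proposal establishes the finite-variable normalization but not the finiteness of occurrences of monomials, which is what Theorem \ref{gener} actually needs.
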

\begin{proof} According to equation $(E)$ of subsection \ref{reminder}, the image in $R$ of a topological generator is of the form $\xi^{m^\ell}-\lambda_\ell\xi^{n^\ell}- \sum_pc^{(\ell)}_p\xi^p$ with $\nu(\xi^p)>\nu(\xi^m)$ for all exponents $p$ and of course it is zero in $R$. Since the ring $R$ is noetherian, the ideal generated by the $\xi^p$ which appear in the series is finitely generated, say by $\xi^{e_1},\ldots ,\xi^{e_s}$. Let us choose finitely many of the $\xi_j$ which generate the maximal ideal of $R$, call them collectively $\Xi$ and call $U$ the collection of the corresponding variables in $\widehat{k[(u_i)_{i\in I}]}$ . Then our series can be rewritten as $\xi^{m^\ell}-\lambda_\ell\xi^{n^\ell}- \sum_{j=1}^s G^{(\ell)}_j(\Xi)\xi^{e_j}$, where $ G^{(\ell)}_j(\Xi)\in R$ is a series of monomials in the elements of $\Xi$. Our series is the image of the element \break$u^{m^\ell}-\lambda_\ell u^{n^\ell}- \sum_{j=1}^sG^{(\ell)}_j(U)u^{e_j}$ of $F$.  This element has the same initial binomial as our original series since $w(u^{e_j})>w(u^{m^\ell})$ for all $j$ and involves only finitely many variables. It can replace our original series as element of the ideal $F$ with initial form $u^{m^\ell}-\lambda_\ell u^{n^\ell}$.\par
To prove the second assertion, remember from \cite[\S 4]{Te3} that there is a well-ordering on the set of all monomials obtained by embedding it into $(\N\times\Gamma)_{lex}$ thanks to the finiteness of the fibers of the map $u^e\mapsto w(u^e)$. Fix a choice of $(F_\ell)_{\ell\in L}$ and consider all $k$-linear combinations of the $F_\ell$ of the form $F_\ell -\mu F_{\ell'}$ which do not modify the initial binomial. If the set of monomials $u^e$ which appear infinitely many times as terms in all such linear combinations is not empty, it has a smallest element $u^{e_0}$. Note that we may assume that such a monomial does not appear in the initial binomials because there are only finitely many initial binomials involving a given finite set of variables.\par\noindent Denote by $L_0$ the set of indices $\ell\in L$ such that $u^{e_0}$ appears in $F_\ell$ as above, and  by $L_1\subset L_0$ the finite set of indices $\ell\in L_0$ such that the weight of the initial form of $F_\ell$ is minimal. Denote by $L_2$ the set of indices in $L_0\setminus L_1$ such that the weight of the initial form of $F_\ell$ is minimal, and define recursively in the same manner finite subsets $L_\iota$ indexed by an ordinal.  Now for each $\ell\in L_1$ we can replace $F_\ell$ by $F_\ell-\mu_{\ell,\ell'}F_{\ell'}$ with an $\ell'\in L_2$ without changing the initial binomial, where the constant $\mu_{\ell,\ell'}$ is chosen to eliminate the monomial $u^{e_0}$ from the difference. In so doing we create a new system of overweight deformations of the collection of binomial ideals and cannot add a monomial of lesser weight than $u^{e_0}$ which might appear infinitely many times since $u^{e_0}$ was the smallest in the original family $(F_\ell)_{\ell \in L}$. We continue this operation with $\ell\in L_\iota,\ell'\in L_{\iota+1}$. In the end we have a new system of overweight deformations obtained by linear combinations and in which $u^{e_0}$ cannot appear. This contradiction shows that we can choose the $F_\ell$ so that no monomial appears infinitely many times. \par\noindent It follows from lemma \ref{appear} that under our assumption, for any collection of isobaric polynomials $A_\ell$, the $A_\ell (F_\ell -(u^{m^\ell}-\lambda_\ell u^{n^\ell}))$, cannot contain infinitely many times the same monomial. On the other hand since each $A_\ell$ is an isobaric polynomial, a monomial can appear only finitely many times in all the $A_\ell u^{m^\ell}$ or $A_\ell u^{n^\ell}$, so that altogether no monomial can appear infinitely many times in the collection of series $A_\ell F_\ell$.
\end{proof}
\begin{theorem}\label{gener}{{\rm (The valuative Cohen theorem, part 2)}}
The kernel $F$ of $\pi$ is the closure of the ideal generated by the elements $F_\ell$ obtained as in Proposition \ref{finite} as $U^{m^\ell}-\lambda_\ell U^{n^\ell}$ runs through a set of generators of the kernel of ${\rm gr}_w\pi$. In a slightly generalized sense, these generators form a standard basis of the kernel of $\pi$ with respect to the weight $w$.
\end{theorem}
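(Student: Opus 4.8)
The plan is to prove the two inclusions $\overline{(F_\ell)_{\ell\in L}}\subseteq F$ and $F\subseteq\overline{(F_\ell)_{\ell\in L}}$ separately. For the first, the ordinary ideal $(F_\ell)_{\ell\in L}$ lies in $F=\ker\pi$ since each $F_\ell$ does, so it suffices to show that $F$ is stable under the transfinite summation of Definition \ref{closure}. If $g=\sum_\tau z_\tau$ with all $z_\tau\in F$, then each partial sum $s_\mu$ satisfies $\pi(s_\mu)=0$, hence $\pi(g)=\pi(g)-\pi(s_\mu)=\pi(g-s_\mu)$ for every $\mu$; since $g-s_\mu$ is a transfinite sum of monomials each occurring only finitely often, Corollary \ref{chev}, together with the noetherianity and completeness of $R$, shows that for every integer $D$ one has $\pi(g-s_\mu)\in m^{D+1}$ for all large $\mu$, whence $\pi(g)\in\bigcap_D m^D=(0)$ and $g\in F$.

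For the inclusion $F\subseteq\overline{(F_\ell)_{\ell\in L}}$ the key first step is that the initial ideal of $F$ is exactly $\ker({\rm gr}_w\pi)$: for $h\in F\setminus\{0\}$ one has $w(h)<\infty=\nu(\pi(h))$, so by the remark following Theorem \ref{cohen} the initial form ${\rm in}_w h$ lies in $\ker({\rm gr}_w\pi)$, while conversely ${\rm in}_w F$ is a homogeneous ideal containing each ${\rm in}_w F_\ell=U^{m^\ell}-\lambda_\ell U^{n^\ell}$, and these binomials generate $\ker({\rm gr}_w\pi)$. Thus $(F_\ell)_{\ell\in L}$ is a standard basis of $F$ in the naive sense, and I would run the transfinite division starting from $h=h_0\in F\setminus\{0\}$. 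At a successor stage I write the homogeneous element ${\rm in}_w h_\tau$ as a \emph{finite} combination $\sum_{\ell\in L'_\tau}B^{(\tau)}_\ell(U^{m^\ell}-\lambda_\ell U^{n^\ell})$ with each $B^{(\tau)}_\ell$ isobaric of weight $w(h_\tau)-w(u^{m^\ell})$, lift its monomials to monomials of $\widehat{k[(u_i)_{i\in I}]}$ to get $\tilde B^{(\tau)}_\ell$, and put $h_{\tau+1}=h_\tau-\sum_{\ell\in L'_\tau}\tilde B^{(\tau)}_\ell F_\ell\in F$; the weight-$w(h_\tau)$ components cancel, so $w(h_{\tau+1})>w(h_\tau)$ unless $h_{\tau+1}=0$. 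At a limit stage $\tau$ the sequence $(h_\sigma)_{\sigma<\tau}$ is pseudo-convergent, because $w(h_\sigma-h_{\sigma'})=w(h_\sigma)$ is strictly increasing in $\sigma$, so it has a pseudo-limit $h_\tau$ by the spherical completeness of $\widehat{k[(u_i)_{i\in I}]}$ (\cite[Theorem 4.2]{Te3}); this $h_\tau$ lies in $F$ by the closedness just proved, and a telescoping identity gives $h_\tau=h-\sum_{\sigma<\tau}(h_\sigma-h_{\sigma+1})$.

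The process must terminate: if it did not, the $w(h_\tau)$ would form a strictly increasing, hence injective, map from the class of all ordinals into the set $\Gamma$, which is impossible. So $h_{\tau^\ast}=0$ for some $\tau^\ast$, and then
$$h=\sum_{\sigma<\tau^\ast}(h_\sigma-h_{\sigma+1})=\sum_{\sigma<\tau^\ast}\sum_{\ell\in L'_\sigma}\tilde B^{(\sigma)}_\ell F_\ell ,$$
a sum of products of monomials with the $F_\ell$. The product occurring at stage $\sigma$ has $w$-initial weight $w(h_\sigma)$, which is strictly increasing in $\sigma$, so only finitely many products of any given weight occur; by Proposition \ref{finite} no monomial of $\widehat{k[(u_i)_{i\in I}]}$ then occurs in infinitely many of them, so the double sum is a legitimate transfinite sum of elements of $(F_\ell)_{\ell\in L}$ and exhibits $h$ in $\overline{(F_\ell)_{\ell\in L}}$. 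Together with the equality ${\rm in}_w F=\ker({\rm gr}_w\pi)=(\,{\rm in}_w F_\ell\,)_{\ell\in L}$, this is precisely the assertion that the $F_\ell$ form a standard basis of $\ker\pi$ in the generalized sense announced.

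The real difficulty throughout is transfinite convergence, which is exactly what the preceding subsections supply: one needs a pseudo-limit at each limit stage (spherical completeness), one needs that pseudo-limit to remain a relation in $F$ (the closedness of $F$, where Corollary \ref{chev} and the noetherianity of $R$ intervene), and --- most delicately --- one needs the doubly transfinite sum $\sum_\sigma\sum_\ell\tilde B^{(\sigma)}_\ell F_\ell$ to define an element of $\widehat{k[(u_i)_{i\in I}]}$ at all, which is precisely the combinatorial property of the particular equations $F_\ell$ that was secured in Proposition \ref{finite}. Everything else is the familiar standard-basis bookkeeping, now carried out along the well ordered set of weights instead of over $\N$.
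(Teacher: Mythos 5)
Your proof follows essentially the same route as the paper's: transfinite reduction of $h\in F$ against the $F_\ell$ with finite isobaric coefficients, strictly increasing weights, spherical completeness at limit stages, and the combinatorial guarantee of Proposition \ref{finite} that the resulting transfinite sums actually exist in $\widehat{k[(u_i)_{i\in I}]}$. The only difference is that you also spell out the easy inclusion (the closure of the ideal generated by the $F_\ell$ lies in $\ker\pi$), which the paper leaves implicit; your argument for it via Corollary \ref{chev} fits the paper's framework.
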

\begin{proof} Let $h$ be a non zero element of $F$; its weight is finite but its image in $R$ has infinite value. Therefore, ${\rm in}_wh$ belongs to the ideal $F_0$ generated by the $(U^{m^\ell}-\lambda_\ell U^{n^\ell})_{\ell\in L}$.\par\noindent Thus, there exists a finite set of elements $(F_\ell)_{\ell\in L_1}$, with $ L_1\subset L$, and isobaric polynomials $(A_\ell)_{\ell\in L_1}$ in the $u_i$ such that $w(h-\sum_{\ell \in L_1}A^{(1)}_\ell F_\ell)>w(h)$. Since this difference is still in $F$, we can iterate the process and build a series indexed by the set $T$ of weights of the successive elements, say $y_\tau$, with $y_0=h, y_1=h-\sum_{\ell \in L_1}A^{(1)}_\ell F_\ell$ and, if $\tau+1$ is the successor of $\tau$ in the index set $T$, $y_\tau-y_{\tau+1}=\sum_{\ell \in L_\tau}A^{(\tau)}_\ell F_\ell$, such that $w(y_{\tau+1})>w(y_\tau)$, so that $w(y_\tau)= w(y_{\tau+1}-y_\tau)$. The sequence $(y_\tau)_{\tau\in T}$  is a pseudo-convergent sequence with respect to the ultrametric $u(x,y)=w(y-x)$ which, according to \cite[ \S 4]{Te3} has $0$ as a pseudo-limit in the spherically complete ring $ \widehat{k[(u_i)_{i\in I}]}$. This is of course not sufficient to prove what we want. However,  in view of proposition \ref{finite}, no monomial can appear in infinitely many of the $y_\tau-y_{\tau+1}$. If $h\neq \sum_{\tau<\rho}(y_{\tau+1}-y_\tau)$, the initial form of $h- \sum_{\tau<\rho}(y_{\tau+1}-y_\tau)$ is in $F_0$ and we can continue the approximation. Thus, the transfinite sum $\sum_{\tau\in T}(y_\tau-y_{\tau+1})$ exists in $\widehat{k[(u_i)_{i\in I}]}$ and, by definition of $T$, has to be equal to $h$ so that $0$ is indeed the limit of the sequence $y_\tau$. This shows that $h$ is in the closure of the ideal generated by the $F_\ell$. \end{proof}
\begin{remark}\label{error}The statement about the $F_\ell$ topologically generating the kernel of $\pi$ is part a) of the asterisked\footnote{An asterisked propositions in that text means it is endowed with hope but not with a proof.}  proposition 5.53 of \cite{Te1}. Part b) of that proposition is incorrect, as the next section shows.
\end{remark}
\textit{From now on we shall assume that each of the equations $F_\ell$ we consider involves only finitely many variables and the $F_\ell$'s topologically generate the kernel of $\pi$}.\par\medskip\noindent

Coming back to the valuative Cohen theorem itself, if the valuation $\nu$ is of rank one or if the semigroup $\Gamma$ is finitely generated, the result will be true for any choice of representatives $\xi_i$ (see \cite{Te3}, Remarks 4.14).\par In view of their definition the $(\xi_i)_{i\in I}$ generate the maximal ideal of $R$. Since $R$ is noetherian there is a finite subset $J\subset I$ such that the elements $(\xi_i)_{i\in J}$ minimally generate this maximal ideal. According to (\cite{Te1}, 5.5), for each $\ell\in I\setminus  J$ there must be among the topological generators $(E)$ of the ideal $F$ one which contains $u_\ell$ as a term. We detail the argument here:\par\noindent The element $\xi_\ell$ is expressible as a series $h((\xi_i)_{i\in J})$. Therefore the series $u_\ell-h((u_i)_{i\in J})$ must belong to the ideal $F$.  There must be an element $H$ of the closure of the ideal generated by the series $(u^{m^\ell}-\lambda_\ell u^{n^\ell}+\sum_{w(p)>w(m^\ell)}c_p^{(\ell)}u^p)_{\ell\in L}$ such that $w(u_\ell-h((u_i)_{i\in J})-H)>\gamma_\ell$. Thus, the series $H$ must contain $u_\ell$ as a term so that at least one of the topological generators of $F$ must contain $u_\ell$ as a term. An argument given in (\cite{Te3}, proof of proposition 7.9) suggests that:
\begin{proposition}\label{lin} Up to a change of the representatives $\xi_i$ we may assume that each variable $u_i$ with $i\in I\setminus J$ appears as a term in a topological generator of ${\rm ker}\pi$ of the form:
$$u^{m^i}-\lambda_i u^{n^i}-g_i((u_q)_{q<i})-u_i,\eqno{(F_i)}$$ where every term of the series $g_i((u_q)_{q<i})$ has weight $>w(u^{m^i})$ and $<w(u_i)$.
\end{proposition}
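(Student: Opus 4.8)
The plan is to start, for each $i\in I\setminus J$, from the expression of $\xi_i$ in terms of the ``essential'' generators and then to run the standard-basis reduction of Theorem \ref{gener} while watching the moment at which the variable $u_i$, whose weight is $\gamma_i$, is forced into the picture. Since $R$ is complete with $m=(\xi_j)_{j\in J}$, any chosen representative satisfies $\xi_i=h_i((\xi_j)_{j\in J})$ for some $h_i\in\widehat{k[(u_j)_{j\in J}]}$, so $u_i-h_i((u_j)_{j\in J})\in\ker\pi$; this is the element on which the argument of \cite[proof of Proposition 7.9]{Te3} is modelled. The first point is that $w(h_i)<\gamma_i$: one always has $w(h_i)\le\nu(\pi(h_i))=\gamma_i$, whereas $w(h_i)=\gamma_i$ would exhibit $\gamma_i$ as a sum of the $(\gamma_j)_{j\in J}$, hence --- the degree-$\gamma_i$ component of ${\rm gr}_\nu R$ being one dimensional over $k$ --- would make $\overline\xi_i$ a scalar multiple of a monomial in the $(\overline\xi_j)_{j\in J}$, contradicting the minimality of the system $(\overline\xi_i)_{i\in I}$. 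Moreover, since the $\gamma_q$ increase with $q$ and $\gamma_i$ does not belong to the subsemigroup generated by the $(\gamma_q)_{q<i}$, every monomial of weight $<\gamma_i$ involves only the variables $(u_q)_{q<i}$, while $u_i$ is the unique monomial of weight exactly $\gamma_i$; in particular the weight-$\gamma_i$ part of the binomial ideal $\ker{\rm gr}_w\pi$ is zero, and ${\rm in}_w(u_i-h_i)=-{\rm in}_w h_i$ is a nonzero element of $\ker{\rm gr}_w\pi$ of weight $\delta_i:=w(h_i)<\gamma_i$.

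Next I would reduce $u_i-h_i$ against the topological generators $F_\ell$: subtract isobaric combinations $\sum_\ell A_\ell F_\ell$ killing successive initial forms, producing a transfinite family of reduction pieces of strictly increasing weights which sum back to $u_i-h_i$ in $\widehat{k[(u_i)_{i\in I}]}$, exactly as in the proof of Theorem \ref{gener}. The monomial $u_i$ occurs in $u_i-h_i$ with coefficient $1$; by Proposition \ref{finite} it occurs in only finitely many of the products $A_\ell F_\ell$ used, hence in only finitely many reduction pieces, and since $\ker{\rm gr}_w\pi$ vanishes in weight $\gamma_i$ there is no reduction piece of weight $\ge\gamma_i$ containing $u_i$; therefore some reduction piece of weight $\delta<\gamma_i$ contains $u_i$, and at least one of the finitely many $F_\ell$ occurring in it, say $F_{\ell_0}$, has $u_i$ as a term. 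Since $\ker{\rm gr}_w\pi$ has nothing in weight $\gamma_i$, $u_i$ is not the initial term of $F_{\ell_0}$, so after rescaling $F_{\ell_0}=u^{m^i}-\lambda_i u^{n^i}+(\text{terms of weight in }(\delta,\gamma_i))-u_i+(\text{terms of weight }>\gamma_i)$, with the binomial $u^{m^i}-\lambda_i u^{n^i}$, a relation among the $(\gamma_q)_{q<i}$, as initial form. To reach the shape $(F_i)$ I would discard from $F_{\ell_0}$ the tail $T$ of terms of weight $>\gamma_i$, setting $F_i:=u^{m^i}-\lambda_i u^{n^i}-g_i((u_q)_{q<i})-u_i$ with $g_i$ the collection of terms of weight strictly between $w(u^{m^i})$ and $\gamma_i$, and simultaneously replace the representative $\xi_i$ by $\xi_i':=\xi_i-\pi(T)$; since $w(T)>\gamma_i$ this keeps $\nu(\xi_i)=\gamma_i$ and ${\rm in}_\nu\xi_i=\overline\xi_i$, and a direct check shows $F_i\in\ker\pi$ for the updated map. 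Substituting the resulting expression for $u_i$ modulo $\ker\pi$ into the remaining generators that still involve $u_i$, and discarding redundancies, keeps the family a standard basis of $\ker\pi$ by Theorem \ref{gener}, now containing $F_i$.

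The delicate point --- and the reason for the clause ``up to a change of the representatives'' --- is coherence: the replacements of $\xi_i$ by $\xi_i'$ must be made for all $i\in I\setminus J$ at once, each $\xi_i'$ being a series in strictly earlier variables, without disturbing the data $\nu(\xi_q)=\gamma_q$ and ${\rm in}_\nu\xi_q=\overline\xi_q$ already used, nor the equations $F_{q'}$ built for $q'<i$ (the latter being automatic, as those involve only variables $u_t$ with $t\le q'<i$). I would therefore carry out the whole construction as a transfinite induction along the well-ordered set $I$, running at stage $i$ the reduction relative to the representatives and equations fixed at the earlier stages. The main obstacle is to control this bookkeeping over the transfinite process --- in particular to guarantee that truncating each $F_{\ell_0}$ at weight $\gamma_i$, adjusting $\xi_i$, and updating the finitely many other generators through which $u_i$ occurs never causes a fixed monomial to be altered infinitely often --- which is exactly the kind of finiteness that Proposition \ref{finite} (together with Theorem \ref{Chevalley}) is designed to provide; verifying that it survives the induction is the technical heart of the proof, everything else being the ``key polynomial'' type manipulation already carried out in the finitely generated case in \cite[\S 7]{Te3}.
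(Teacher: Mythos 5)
Your proposal follows essentially the same route as the paper: starting from $\xi_i=h_i((\xi_j)_{j\in J})$, hence $u_i-h_i\in\ker\pi$, you use Theorem \ref{gener} to find a topological generator containing $u_i$ as a term, note that $u_i$ cannot occur in its initial binomial (so the lower-weight part involves only variables of index $<i$), and absorb the tail of weight $>\gamma_i$ into a modified representative $\xi_i-\pi(T)$, proceeding by transfinite induction on $i\in I\setminus J$. The coherence of the transfinite process that you flag as the remaining technical point is treated no more explicitly in the paper, whose proof likewise just handles the smallest index not yet in the required form and invokes transfinite induction, so your argument is correct and matches the paper's approach and level of detail.
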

\begin{proof} Up to multiplication of variables by a constant and corresponding modification of the $\lambda_\ell$, we may assume that for each $i\in I\setminus J$, the variable $u_i$ appears with coefficient minus one in $F_i$. Let $i_1$ be the smallest element of $I\setminus J$ such that there exists an equation $F_i$ as above containing $u_i$ linearly and not having the form described above. We write $g((u_q)_{q<i_1})$ for the series of terms of weight $<i_1$. The remaining part of $F_{i_1}$ can be written $-u_{i_1}+\sum_{w(p)>\gamma_{i_1}}c^{(i_1)}_pu^p$. If we replace the representative $\xi_{i_1}\in R$ by $\xi_{i_1}-\sum_{w(p)>\gamma_{i_1}}c^{(i_1)}_p\xi^p$ without changing the previous $\xi_j$, we modify the equation $F_{i_1}$ into the desired form. We note that in view of the definition of $i_1$ we may assume that no variable of index $<i_1$ appears as a term in $(F_{i_1})$. We then continue with the smallest index $i_2$ such that the corresponding equation does not have the desired form, and obtain the result by transfinite induction.
\end{proof}

\begin{proposition}\label{irr} Each $F_\ell$ is irreducible in $\widehat{k[(u_i)_{i\in I}]}$.
\end{proposition}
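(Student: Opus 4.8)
The plan is to reduce the irreducibility of $F_\ell$ to the irreducibility of its initial binomial $B_\ell={\rm in}_wF_\ell=u^{m^\ell}-\lambda_\ell u^{n^\ell}$ in the polynomial ring $k[(U_i)_{i\in I}]$, and then to extract the latter from the cleaning hypothesis imposed on the generating binomials of $\ker({\rm gr}_w\pi)$. First I would argue as follows: if $F_\ell=GH$ in $\widehat{k[(u_i)_{i\in I}]}$, then $G$ and $H$ are nonzero, hence have well-defined weights and initial forms, and since the associated graded ring ${\rm gr}_w\widehat{k[(u_i)_{i\in I}]}=k[(U_i)_{i\in I}]$ is a domain one has ${\rm in}_w(GH)={\rm in}_w(G)\,{\rm in}_w(H)$; as ${\rm in}_wF_\ell=B_\ell$, this yields a factorization $B_\ell={\rm in}_w(G)\,{\rm in}_w(H)$ in $k[(U_i)_{i\in I}]$. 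If $B_\ell$ is irreducible there, one of the two factors, say ${\rm in}_w(G)$, is a unit, that is a nonzero constant; since all $\gamma_i>0$ the only monomial of weight $0$ is $1$, so $G$ has a nonzero constant term and is a unit in $\widehat{k[(u_i)_{i\in I}]}$ by Corollary~\ref{local}. Thus everything reduces to: each $B_\ell$ is irreducible in $k[(U_i)_{i\in I}]$.

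Next I would show that the cleaning hypothesis forces the supports of $u^{m^\ell}$ and $u^{n^\ell}$ to be disjoint and $m^\ell-n^\ell$ to be a primitive element of $\Z^{(I)}$. If some variable $u_j$ divided both monomials, then $B_\ell=u_j^{e}B'$ with $e>0$ and $B'$ a binomial of strictly smaller weight; since $\ker({\rm gr}_w\pi)$ is prime (Theorem~\ref{cohen}) and $u_j^e$ is not in it, $B'$ lies in that kernel, and writing $B'$ in terms of the chosen generating binomials, necessarily all of weight $\le w(B')<w(B_\ell)$ by weight‑homogeneity, shows that $B_\ell$ lies in the ideal generated by the binomials other than $B_\ell$, contradicting cleaning. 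If instead the supports are disjoint but $d=\gcd$ of all the exponents of $u^{m^\ell},u^{n^\ell}$ is $\ge 2$, write $m^\ell=da$, $n^\ell=db$; base‑changing to $\overline k$ and using $\overline k\otimes_k{\rm gr}_\nu R\simeq\overline k[t^\Gamma]$ (Proposition~\ref{smgrp}), one finds $\mu$ with $\mu^{d}=\lambda_\ell$ such that $u^{a}-\mu u^{b}$ lies in the kernel of $\overline k[(U_i)_{i\in I}]\to\overline k[t^\Gamma]$ and $B_\ell=(u^{a}-\mu u^{b})(u^{(d-1)a}+\mu u^{(d-2)a+b}+\cdots+\mu^{d-1}u^{(d-1)b})$; since $w(u^{a}-\mu u^{b})=w(u^{a})<w(B_\ell)$, the same weight‑homogeneity argument places $B_\ell$, over $\overline k$, in the ideal generated by the other binomials, and faithfully flat descent along $k[(U_i)_{i\in I}]\to\overline k[(U_i)_{i\in I}]$ transports this to $k$, again contradicting cleaning.

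Finally I would conclude by a Newton‑polytope argument. Since $F_\ell$ involves only finitely many variables one may work in $k[U_1,\dots,U_N]$: the Newton polytope of $B_\ell$ is the segment joining $n^\ell$ to $m^\ell$, which by the previous step contains no lattice point other than its endpoints. In any factorization $B_\ell=PQ$ the Newton polytope is the Minkowski sum of those of $P$ and $Q$, so each of the latter is a point or the whole segment; a factor whose Newton polytope is a point is a monomial, which by disjointness of supports must be a nonzero constant. Hence $B_\ell$ is irreducible in $k[U_1,\dots,U_N]$, therefore in $k[(U_i)_{i\in I}]$, and by the first step $F_\ell$ is irreducible in $\widehat{k[(u_i)_{i\in I}]}$.

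I expect the main obstacle to be the second step, namely extracting the combinatorial primitivity of each $B_\ell$ from the purely ideal‑theoretic cleaning hypothesis, and within it the case $d\ge 2$: there the lower‑weight relation $u^{a}-\mu u^{b}$ responsible for the factorization is only defined after extending scalars to $\overline k$ (its coefficient $\mu$ being a $d$‑th root of $\lambda_\ell$ which need not lie in $k$), so the contradiction with cleaning must be obtained over $\overline k$ and then descended by faithful flatness. Everything else is routine once the associated graded ring is recognized as a domain.
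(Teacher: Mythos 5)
Your proof is correct, but it follows a genuinely different route from the paper's. The paper's own argument is two lines: if $F_\ell=GH$ with $G,H$ non-units, then since $\ker\pi$ is prime (because $R$ is a domain) one factor, say $G$, lies in $\ker\pi$; by Theorem \ref{gener} its initial form lies in the ideal generated by the binomials, and since $w(G)<w(F_\ell)$ only binomials other than $U^{m^\ell}-\lambda_\ell U^{n^\ell}$ can occur in a weight-homogeneous expression, so ${\rm in}_wF_\ell={\rm in}_wG\cdot{\rm in}_wH$ exhibits that binomial in the ideal generated by the others, contradicting the cleaning assumption. You instead prove the stronger statement that each initial binomial is itself irreducible in $k[(U_i)_{i\in I}]$ (disjoint supports and primitive exponent vector, both forced by the cleaning hypothesis together with primality of $\ker({\rm gr}_w\pi)$, then the Newton-segment argument), and you transfer irreducibility up the filtration via ${\rm in}_w(GH)={\rm in}_wG\,{\rm in}_wH$ and Corollary \ref{local}; notably you never use primality of $\ker\pi$ or Theorem \ref{gener}. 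All steps check out, including the faithfully flat descent from $\overline k$; but that detour is avoidable: since $\nu$ is rational the homogeneous components of ${\rm gr}_\nu R$ are one-dimensional over $k$, so when $m^\ell=da$, $n^\ell=db$ with $d\geq 2$ one has $w(u^a)=w(u^b)$ and hence a binomial $u^a-\lambda' u^b$ already in $\ker({\rm gr}_w\pi)$ over $k$ (with $(\lambda')^d=\lambda_\ell$), which gives the contradiction directly. The trade-off: the paper's proof is shorter and leans on machinery already in place, while yours is self-contained at the graded level and yields extra combinatorial information (irreducibility, indeed primitivity, of each generating binomial) that the paper's argument does not provide.
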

\begin{proof} Should it be reducible, since the kernel $F$ of $\pi$ is prime, one of its factors should be in $F$ and by Theorem \ref{gener} and the fact that the kernel of ${\rm gr}_w\pi$ is prime, the initial binomial of $F_\ell$ should be in the ideal generated by the initial forms of the other generators of $F$, which is impossible in view of the assumption we made on the generating binomials.\par
\end{proof}
\begin{remark}
Since the initial forms of the $F_\ell$ are the binomials defining our toric variety ${\rm Spec}{\rm gr}_\nu R$, Theorem \ref{gener} indeed provides equations for the degeneration to ${\rm Spec}{\rm gr}_\nu R$. However, since the binomials $(U^{m^\ell}-\lambda_\ell U^{n^\ell})_{\ell\in L}$ come in bulk, it is difficult to make use of them. The next two subsections address this difficulty.
\end{remark}
\end{subsection}

\begin{subsection}{The valuative Cohen theorem for power series rings}\label{cohenreg}
In this section we prove that, in the case where $R$ is a power series ring, the equations provided by the valuative Cohen theorem can then be given a more specific form.\par\noindent
Assume that $R$ is a power series ring and we have chosen representatives $(\xi_i)_{i\in I}$ with which we can apply the valuative Cohen theorem. Since the $\xi_i$ generate the maximal ideal, we can find a subset $J\subset I$ such that the $(\xi_i)_{i\in J}$ form a minimal system of generators of the maximal ideal of $R$. Note that we state nothing about the rational independence of their valuations. 
By Proposition \ref{lin} for each $i\in I\setminus J$ there is a series of the form 
$$u^{m^i}-\lambda_i u^{n^i}-g_i((u_q)_{q<i})-u_i,\eqno{(F_i)}$$
among the topological generators of ${\rm ker}\pi$.\par\noindent Generators of the relations between the values of the $(\xi_i)_{i\in J}$ are encoded by finitely many binomials $(U^{m_\ell}-\lambda_\ell U^{n_\ell})_{\ell \in L_0}$ which according to Theorem \ref{gener} are the initial forms for the weight $w$ of series $(F_\ell)_{\ell \in L_0}$ which are among the topological generators of ${\rm ker}\pi$. 
%__________
\begin{theorem}\label{regular} If $R$ is regular, i.e., a power series ring with coefficients in $k$, the kernel of the map $\pi\colon \widehat{k[(u_i)_{i\in I}]}\to R$ asssociated to a rational valuation $\nu$ by Theorems \ref{cohen} and \ref{gener} is the closure of the ideal generated by the $(F_i)_{i\in I\setminus J}$.
\end{theorem}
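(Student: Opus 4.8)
The strategy: the inclusion $\overline{(F_i)_{i\in I\setminus J}}\subseteq\ker\pi$ is immediate, and for the converse I would reduce each topological generator of $\ker\pi$ modulo the $F_i$ ($i\in I\setminus J$) to an element of the power series subring $R_0:=k[[(u_j)_{j\in J}]]\subset\widehat{k[(u_i)_{i\in I}]}$ on which $\pi$ is injective. Injectivity of $\pi$ on $R_0$ comes from regularity: the $(\xi_j)_{j\in J}$ minimally generate $m$, so $|J|=\dim_k m/m^2=\dim R$, and the continuous surjection $\pi|_{R_0}\colon R_0\to R$, $u_j\mapsto\xi_j$ (surjective since the $\xi_j$ generate $m$ and $R$ is complete), is then a surjection of noetherian domains of equal Krull dimension, hence — by Cohen's structure theorem, or a height count on its kernel — an isomorphism; in particular $R_0\cap\ker\pi=(0)$. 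The easy inclusion is then also clear: the $F_i$ with $i\in I\setminus J$ are among the topological generators $(F_\ell)_{\ell\in L}$ by Proposition \ref{lin}, so $\overline{(F_i)_{i\in I\setminus J}}\subseteq\overline{(F_\ell)_{\ell\in L}}=\ker\pi$ by Theorem \ref{gener}.

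For the reverse inclusion $\ker\pi\subseteq\overline{(F_i)_{i\in I\setminus J}}$ I would argue that every topological generator $F_\ell$ already lies in the ordinary ideal $(F_i)_{i\in I\setminus J}$. By the assumption in force after Theorem \ref{gener}, each $F_\ell$ involves only finitely many variables; fix one and let $i\in I\setminus J$ be the one of largest index among the variables occurring in it (if there is none, $F_\ell\in R_0\cap\ker\pi=(0)$ and we are done). Put $P_i:=F_i+u_i=u^{m^i}-\lambda_i u^{n^i}-g_i((u_q)_{q<i})$. By Proposition \ref{lin} it involves only (finitely many) variables of index $<i$, and it has no constant term, since the weight $w(u^{m^i})$ of its initial binomial is positive (all $\gamma_q>0$ and $m^i\neq 0$). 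Writing $F_\ell=\sum_{a\geq 0}Q_a u_i^a$ with the $Q_a$ free of $u_i$, the series $F_\ell^{(1)}:=\sum_{a\geq 0}Q_a P_i^{\,a}$ is a legitimate element of $\widehat{k[(u_i)_{i\in I}]}$ by Corollary \ref{substitute} (we substitute the constant-term-free $P_i$ for $u_i$); it again involves only finitely many variables, all of those in $I\setminus J$ now having index $<i$, and $F_\ell-F_\ell^{(1)}=\sum_a Q_a(u_i^a-P_i^{\,a})$ is a multiple of $u_i-P_i=F_i$ (the cofactor converging by the same argument as Corollary \ref{substitute}), so $F_\ell-F_\ell^{(1)}\in(F_i)$. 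Iterating, the largest index in $I\setminus J$ of a variable of $F_\ell^{(s)}$ strictly decreases with $s$; since $I$ is well ordered, the process halts after finitely many steps at an element $F_\ell^{(k)}\in R_0$ for which $F_\ell-F_\ell^{(k)}$ is a finite $\widehat{k[(u_i)_{i\in I}]}$-linear combination of the $F_i$ ($i\in I\setminus J$). As $F_\ell$ and all $F_i$ lie in $\ker\pi$, so does $F_\ell^{(k)}$; being in $R_0$ it is therefore $0$, whence $F_\ell\in(F_i)_{i\in I\setminus J}$.

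Applying this to every $\ell\in L$ gives $(F_\ell)_{\ell\in L}\subseteq(F_i)_{i\in I\setminus J}$, hence $\ker\pi=\overline{(F_\ell)_{\ell\in L}}\subseteq\overline{(F_i)_{i\in I\setminus J}}$, which together with the first paragraph is the theorem. The step that needs the most care is the elimination in the second paragraph: one must check that the substitution $u_i\mapsto P_i$ and the resulting cofactor stay inside $\widehat{k[(u_i)_{i\in I}]}$, i.e.\ that no monomial is produced infinitely often — which is exactly what Corollary \ref{substitute} provides because $P_i$ has no constant term — and that the intermediate $F_\ell^{(s)}$ keep only finitely many variables, so that ``largest index in $I\setminus J$'' is meaningful and the well-ordering of $I$ forces the elimination to terminate after finitely, rather than transfinitely, many steps. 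As a byproduct the $F_\ell$ in fact lie in the \emph{ordinary} ideal generated by the $(F_i)_{i\in I\setminus J}$, the closure being needed on the right only because $\ker\pi$ itself is merely topologically generated.
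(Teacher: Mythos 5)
Your proof is correct and follows essentially the same route as the paper's: eliminate the variable of largest index not in $J$ by substituting $F_i+u_i$ for $u_i$ (Corollary \ref{substitute}), observe that the difference is a multiple of $F_i$, iterate with termination guaranteed by the well-ordering of $I$, and kill the residual series in $k[[(u_j)_{j\in J}]]$ using that a minimal system of generators of the maximal ideal of the regular ring $R$ satisfies no nontrivial power series relation. The only (harmless) deviation is that the paper passes at each step to an irreducible factor of the substituted series lying in ${\rm ker}\,\pi$, invoking Proposition \ref{irr} and the primality of ${\rm ker}\,\pi$, whereas you iterate on the substituted series directly; your streamlined variant is valid and makes explicit that each $F_\ell$ lies in the ordinary ideal generated by the $(F_i)_{i\in I\setminus J}$, the closure being needed only because ${\rm ker}\,\pi$ is itself only topologically generated by the $F_\ell$.
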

\begin{proof} It suffices to show that all the $(F_\ell)_{\ell \in L}$ of Theorem \ref{gener} are in the ideal generated by the $F_i$. It is impossible for any $F_\ell$ to use only variables with indices in $J$ because if that were the case its image in $R$ by $\pi$ would be a non trivial relation between the $(\xi_i)_{i\in J}$, contradicting the fact that these are a minimal set of generators of the maximal ideal in a regular local ring.\par\noindent Given $F_\ell$ with $\ell\in L$, let $i$ be the largest index which is not in $J$ of a variable appearing in $F_\ell$. Let us substitute $F_i+u_i$ in place of $u_i$ in $F_\ell$, according to corollary \ref{substitute}.  Let us denote by $\tilde F_\ell$ the result of this substitution. It involves only finitely many variables, and those whose indices are not in $J$ are all of index $<i$. The difference $F_\ell -\tilde F_\ell$ is a multiple of $F_i$. If $\tilde F_\ell=0$, since $F_\ell$ is irreducible it should be equal to $F_i$ up to multiplication by a unit, which proves the assertion in this case. Thus we may assume that $\tilde F_\ell\neq 0$. Since the initial form ${\rm in}_w\tilde F_\ell$ can have only finitely many factors, the same is true for $\tilde F_\ell$ and since the kernel of $\pi$ is prime, at least one of these factors belongs to it. The variables appearing in this factor are a subset of those appearing in $\tilde F_\ell$. Now we can again take the variable of highest index not in $J$, say $k$, appearing in this factor, make the substitution using the corresponding $F_k$ and repeat the argument; if the result of the substitution is zero, then $F_\ell$ belonged to the ideal generated by $F_i$ and $F_k$. Otherwise we have an irreducible element in ${\rm ker}\pi$ involving only variables of index $<k$ and variables with index in $J$. Since the indices are elements of a well ordered set, after finitely many such steps, either we obtain zero which shows what we want, or we obtain a non zero element of ${\rm ker}\pi$ involving only the variables $(u_i)_{i\in J}$. Again the image of this element by $\pi$ would be a non trivial relation between the elements of a minimal system of generators of the maximal ideal of $R$, which is impossible since $R$ is regular. Thus we obtain a contradiction if $F_\ell$ did not belong to the ideal generated by the $F_i$. 
\end{proof}
\begin{remark}\label{choice} 

1) By the faithful flatness of the toric degeneration, each binomial $u^{m^\ell}-\lambda_\ell u^{n^\ell}$ is the initial binomial of an equation $F_\ell$ and thus the $F_\ell$ are a (generalized) standard basis for the kernel of $\pi$ (see \cite[Proposition 5.53]{Te1}). We do not know whether the $F_i$, which topologically generate the same ideal, are such a standard basis.\par\noindent 
2) Theorem \ref{regular} can be interpreted as a manifestation of the "abyssal phenomenon" of \cite[section 5.6]{Te1}. If the values of the generators $(\xi_i)_{i\in J}$ of the maximal ideal of our power series ring are rationally independent, the semigroup $\Gamma$ is finitely generated, and we have an Abhyankar, even monomial, valuation. If such is not the case, the rational rank of the valuation is strictly less than the dimension of $R$ and we are in the non-Abhyankar case. The semigroup $\Gamma$ is not finitely generated and the binomial relations in ${\rm gr}_\nu R$ expressing the rational dependance of values of the $(\xi_i)_{i\in J}$ cannot give rise to relations in $R$, as they should in view of the faithful flatness of the degeneration of $R$ to ${\rm gr}_\nu R$, because $R$ is regular. The role of the equations $(F_i=0)_{i\in I\setminus J}$ is to prevent their expression in $R$, which would decrease the dimension and introduce singularities, by sending this expression to infinity. Of course one could also point out that something like this is necessary to deform a non-noetherian ring such as ${\rm gr}_\nu R$ into the noetherian $R$, but that leaves out the precise structure of the deformation, which is significant even when ${\rm gr}_\nu R$ is noetherian; see Example \ref{ex1}. \par\noindent
This indicates that it is probably not possible to approximate directly our non-Abhyankar valuation by Abhyankar valuations in this context. However, as we shall see in the sequel, using appropriate truncations of the system of equations $F_\ell$, one can produce a sequence of Abhyankar semivaluations on $R$ (Abhyankar valuations on quotients of $R$) which approximate our valuation $\nu$.\par\noindent
 \end{remark}
\end{subsection}
\begin{subsection}{Lifting the valuation to a power series rings}\label{CohenRegular}
We can present our complete equicharacteristic local domain $R$ as a quotient of a power series ring $S=k[[x_1,\ldots ,x_n]]$ over $k$ by a prime ideal $P$. Our rational valuation $\nu$ then appears as a valuation on $S/P$ which we can compose with the $PS_P$-adic valuation $\mu_1$ of the regular local ring $S_P$ to obtain a rational valuation $\mu$ on $S$. By definition of the symbolic powers $P^{(e)}=P^eS_P\cap S$, the associated graded ring of $S$ with respect to $\mu_1$ is ${\rm gr}_{\mu_1}S=\bigoplus_{e\in \N}\frac{P^{(e)}}{P^{(e+1)}}$.\par
The graded ring ${\rm gr}_\mu S$ is then the graded ring associated to the filtration of ${\rm gr}_{\mu_1}S$  induced on each homogeneous component by the valuation ideals $\Pp_\phi/P^{(e+1)}$ with $P^{(e+1)}\subseteq \Pp_\phi\subseteq P^{(e)}$, where $\phi\in \{e\}\oplus\Phi$.\par  Let us pick a minimal system of generators $p_1,\ldots, p_s$ for the ideal $P$. We note that a regular system of parameters for the regular local ring $S_P$ consists of $n-d$ of the $p_q$'s. We shall assume in the sequel that the dimension of $S$ is minimal, which means that its dimension is the embedding dimension of $R$.
\begin{proposition}\label{compose} 1) The valuation $\mu$ on $S$ obtained as explained above is rational and its group is $(\Z\oplus\Phi)_{lex}$.  \par\noindent 2) The generators $p_1,\ldots ,p_s$ of the ideal $P$ can be chosen so that $\mu(p_1)<\mu(p_2)<\ldots <\mu(p_s)$ and their $\mu$-initial forms ${\rm in}_\mu p_1,\ldots ,{\rm in}_\mu p_s$ are part of a minimal system of generators of ${\rm gr}_\mu S$. All the generators of the semigroup of $\mu$ which are of $\mu_1$-value $>0$ and are not the $\mu (p_j)$ are of value $>\mu (p_s)$.\par\noindent 
\par\noindent 3)
With such a choice the ideal $PS_P$ is generated by $p_1,\ldots ,p_{n-d}$ and in particular  the $\mu_1$-initial forms ${\rm in}_{\mu_1}p_1,\ldots,{\rm in}_{\mu_1}p_{n-d}$ of $p_1,\ldots ,p_{n-d}$ are non zero in $P/P^{(2)}$. \end{proposition}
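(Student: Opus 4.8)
The plan is to dispose of 1) by the structure theory of composite valuations, and of 2)--3) by a ``clean-up'' of a finite generating set of $P$, in the spirit of Propositions \ref{finite} and \ref{lin} and Theorem \ref{regular} but carried out one valuation-rank higher. For 1): since $S_P$ is regular, ${\rm gr}_{\mu_1}S=\bigoplus_e P^{(e)}/P^{(e+1)}$ is a graded subring of the polynomial ring ${\rm gr}_{PS_P}S_P$ over $\kappa(P):={\rm Frac}(R)$, hence a domain, so the $PS_P$-adic order is a valuation of $K={\rm Frac}(S)$ with value group $\Z$, generated by the order of one member of a regular system of parameters of $S_P$. The valuation $\mu$ is obtained by refining $\mu_1$ by the $\nu$-filtration on each homogeneous component of ${\rm gr}_{\mu_1}S$, as recalled above; ${\rm gr}_\mu S$ is then the $\nu$-graded ring of the domain ${\rm gr}_{\mu_1}S\subset\kappa(P)[T_1,\dots,T_{n-d}]$, and one checks this is again a domain, which is what makes $\mu$ a valuation. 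Its value semigroup lies in $(\Z\oplus\Phi)_{lex}$, the subgroup $\{0\}\oplus\Phi$ is convex with quotient the value group $\Z$ of $\mu_1$, and $\mu$ surjects onto $\Z$ and generates $\Phi$, so the value group of $\mu$ is $(\Z\oplus\Phi)_{lex}$. Finally the degree-$(0,0)$ part of ${\rm gr}_\mu S$ is $S/m=k$ and no transcendence has been introduced, so the residue field of $\mu$ is $k$ and $\mu$ is rational.

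For 2), the structural input is the splitting of ${\rm gr}_\mu S$ along the $\mu_1$-degree: for $\phi=(0,\gamma)$ the ideal $\Pp_\phi$ of $S$ is the preimage of $\Pp_\gamma(R)$ under $S\to R$, so the $\mu_1$-degree-$0$ part $\bigoplus_\gamma\Pp_{(0,\gamma)}/\Pp^+_{(0,\gamma)}$ of ${\rm gr}_\mu S$ is canonically ${\rm gr}_\nu R$; and, every element of $\Gamma_\mu\subset(\Z\oplus\Phi)_{lex}$ having non-negative $\mu_1$-component, the part of positive $\mu_1$-degree is an ideal complementary to ${\rm gr}_\nu R$, namely the ideal ${\rm In}_\mu P$ generated by the $\mu$-initial forms of the elements of $P$, so that ${\rm gr}_\mu S/{\rm In}_\mu P\cong{\rm gr}_\nu R$. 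By Proposition \ref{smgrp} applied to $\mu$ on the noetherian local domain $S$, ${\rm gr}_\mu S$ is a twisted semigroup algebra of the well-ordered semigroup $\Gamma_\mu$; take a minimal homogeneous generating system by increasing weight. Those generators of $\mu_1$-degree $0$ project to a minimal generating system of ${\rm gr}_\nu R$, and the remaining ones, of weights $g_1<g_2<\cdots$, have $\mu_1$-degree $\ge 1$. I then pick $p_j\in S$ (necessarily in $P$, as $\mu_1(p_j)\ge 1$) with $\mu(p_j)=g_j$ and ${\rm in}_\mu p_j$ the corresponding generator, cleaning each $p_j$ by subtracting a lift of any expression of ${\rm in}_\mu p_j$ through strictly lower-weight generators, exactly as in Propositions \ref{finite} and \ref{lin}; then the ${\rm in}_\mu p_j$ are part of a minimal generating system of ${\rm gr}_\mu S$. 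Since $S$ is noetherian, $P$ is generated by finitely many of the $p_j$, say $p_1,\dots,p_s$ with $\mu(p_1)<\cdots<\mu(p_s)$, and any generator of $\Gamma_\mu$ of positive $\mu_1$-value other than the $\mu(p_j)$ is, by construction of the enumeration, strictly larger than $g_s=\mu(p_s)$.

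For 3), the $\mu_1$-degree-$1$ part of ${\rm gr}_{\mu_1}S$ is $P/P^{(2)}\cong PS_P/P^2S_P$, the cotangent space of the maximal ideal of the regular local ring $S_P$ of dimension $n-d$; as the $p_j$ generate $P$, hence $PS_P$, a subset of them of size $n-d$ is a regular system of parameters. In $(\Z\oplus\Phi)_{lex}$ one has $(1,\ast)<(e,\ast)$ for $e\ge 2$, so the $p_j$ of $\mu_1$-value exactly $1$ form an initial segment $p_1,\dots,p_t$, and the claim is $t=n-d$. For this one arranges, as part of the choice in 2), that the ${\rm in}_{\mu_1}p_j$ of $\mu_1$-degree $1$ moreover diagonalize the $\nu$-filtration induced on $PS_P/P^2S_P$ --- possible because this is a finite-dimensional valued vector space over $(\kappa(P),\nu)$, which decomposes as a direct sum of lines --- so that the $\mu_1$-degree-$1$ part of ${\rm gr}_\mu S$ is free of rank $n-d$ over ${\rm gr}_\nu R$ on the ${\rm in}_\mu p_j$ of $\mu_1$-degree $1$. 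By 2) these are then exactly the $\mu_1$-degree-$1$ generators of $\Gamma_\mu$, whence $t=n-d$, $PS_P=(p_1,\dots,p_{n-d})$, and the ${\rm in}_{\mu_1}p_j$, being a basis of $PS_P/P^2S_P=P/P^{(2)}$, are non-zero there.

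The step I expect to be the main obstacle is the termination of the clean-up in 2): showing that after subtracting off the parts of the $p_j$ lying in the subalgebra generated by lower-weight homogeneous elements, one reaches, using only finitely many of the $g_j$, a genuine generating set of $P$, without creating new low-weight leading terms. This is the analogue, one rank higher, of the transfinite clean-up of Propositions \ref{finite} and \ref{lin}, and it is here that the noetherianity of $S$ and $S_P$ and the well-ordering of $\Gamma_\mu$ do the real work; the diagonalization of the $\nu$-filtration on $PS_P/P^2S_P$ in 3) and the precise description of $\mu$ as a refinement of $\mu_1$ in 1) are comparatively routine.
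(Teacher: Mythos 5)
The decisive unproved step is in your treatment of 2). Having chosen representatives $p_j\in P$ of the minimal generators of ${\rm gr}_\mu S$ of positive $\mu_1$-degree, you write ``since $S$ is noetherian, $P$ is generated by finitely many of the $p_j$''. Noetherianity only says $P$ is finitely generated; it does not say $P$ is generated by members of your particular family unless you first prove that the whole family $(p_j)$ generates $P$, and that is essentially the content of the proposition, not a formality. The natural attempt (kill the $\mu$-initial form of an element of $P$ by an isobaric combination of the $p_j$ and iterate) produces approximations whose $\mu$-values increase, but $\mu$ has rank at least two, and for such valuations increasing values give no $m$-adic smallness (Example \ref{nochev}); so convergence of this successive approximation inside the ideal requires exactly the Chevalley-type control and careful choice of representatives that the surrounding machinery is built to provide, and you yourself flag the ``termination of the clean-up'' as unresolved. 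The paper proceeds in the opposite direction and thereby avoids the issue: it starts from a finite minimal generating set $p_1,\ldots,p_s$ of $P$, separates the values by replacing $p_i$ by $p_i-\lambda p_j$, and then shows by induction, using the completeness of $S$ to arrange that ${\rm in}_\mu p_{j+1}$ is not the initial form of an element of the ideal $(p_1,\ldots,p_j)$ and using the expression of any $\zeta\in P$ as a combination of the $p_q$, that each ${\rm in}_\mu p_{j+1}$ is forced to be a minimal generator of ${\rm gr}_\mu S$ and that no generator of positive $\mu_1$-value other than the $\mu(p_j)$ can be $\leq\mu(p_s)$. Your appeal to ``the construction of the enumeration'' presupposes that the representatives of the first $s$ such generators already generate $P$, which is the very point at issue (and is also needed to match the fixed number $s$ of a minimal generating set).

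Part 3) likewise rests on an unjustified claim: that the valued $\kappa(P)$-vector space $PS_P/P^2S_P$ ``decomposes as a direct sum of lines'' and that one can arrange the $\mu_1$-degree-one part of ${\rm gr}_\mu S$ to be free of rank $n-d$ over ${\rm gr}_\nu R$. Finite-dimensional valued vector spaces need not admit orthogonal (valuation) bases over a valued field that is not maximal, and in any case the filtration in question is determined by $\mu$ itself, not by your choice of the $p_j$, so it is not something one can ``arrange as part of the choice in 2)''. The paper's argument is much more economical: an element of $P^{(2)}$ has $\mu_1$-value at least $2$, hence $\mu$-value larger, in the lexicographic order, than that of any $p_i\notin P^{(2)}$; since the values $\mu(p_1)<\cdots<\mu(p_s)$ are increasing and $n-d$ of the $p_q$ form a regular system of parameters of $S_P$ (so cannot lie in $P^{(2)}$), the first $n-d$ of them generate $PS_P$ and are nonzero in $P/P^{(2)}$. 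Your part 1) is essentially the paper's argument (composite valuation, value group an ordered extension of $\Z$ by $\Phi$, hence $(\Z\oplus\Phi)_{lex}$, with rationality coming from the residue field), though the assertion that ``no transcendence has been introduced'' deserves more care than either sketch gives it.
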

\begin{proof} Let $S_\mu\subset S_{\mu_1}$ be the inclusion of valuation rings corresponding to the fact that $\mu$ is composed with $\mu_1$ and let $m_{\mu_1}$ be the intersection with $S_\mu$ of the maximal ideal of $S_{\mu_1}$. The valuation ring $R_\nu$ is equal to the quotient $S_\mu/m_{\mu_1}$ and has the same residue field as $S_\mu$.  By (\cite{V}, \S 4), the value group of $\mu$ is an ordered extension of $\Z$ by $\Phi$, which can only be $(\Z\oplus\Phi)_{lex}$.\par If $\mu(p_1)=\mu(p_2)$ for example, their initial forms in ${\rm gr}_\mu S$ are proportional since $\mu$ is rational, and we may replace $p_2$ by $p_2-\lambda p_1$ whose value is $>\mu (p_1)$ if $\lambda$ is chosen appropriately in $k$, and so on. 
In view of the example given on page 200 of \cite{E-M}, some of the series $p_i$ may belong to $P^{(2)}$. However, that is not possible for those which constitute a regular system of parameters for $PS_P$. \footnote{When the field $k$ is of characteristic zero, it is likely that none of the $p_i$ may belong to $P^{(2)}$. We do not know whether $P^{(2)}\subset (x_1,\ldots ,x_n)P$ (see \cite{E-M}) which would give the result immediately by Nakayama's Lemma, but we can obtain a partial result:\par\noindent If $p_j\in P^{(2)}$, there is an $h\notin P$ such that $hp_j\in P^2$. In characteristic zero this implies that $\frac{\partial p_j}{\partial x_i}\in P$ for $i=1,\ldots ,n$. By \cite[Chap. 0, 0.5]{Te}, $p_j$ is then integral over $mP$ so that if we denote by $P_1\subset P$ the ideal generated by those generators of $P$ which are not in $P^{(2)}$, we have $P_1+\overline{mP}=P$. Since the ideal $P$ is integrally closed, by the integral Nakayama Lemma of \cite[lemme 2.4]{Te}, with ${\mathfrak n}_1=0$, we have $\overline{P_1}=P$.}
\par To prove the second assertion of 2), let $\zeta\in S$ be a representative of the smallest element of the semigroup of $\mu$ on $S$ whose $\mu_1$-value is $>0$. This element has to be part of a minimal system of generators of ${\rm gr}_\mu S$. By construction we have $\mu(\zeta)\leq \mu(p_1)$. We also have  ${\rm in}_{\mu_1}\zeta \in P/P^{(2)}$, which is generated as $S/P$-module by the images of the $p_i$, so that $\mu (\zeta)\geq{\rm min}(\mu (p_i))=\mu(p_1)$ and finally $\mu(\zeta)=\mu(p_1)$.
We now proceed by induction and assume that the $\mu$-initial forms of $p_1,\ldots ,p_j$ are the first $j$ generators of ${\rm gr}_\mu S$ whose $\mu_1$-value is $>0$. Since $S$ is complete, we may assume that ${\rm in}_{\mu}p_{j+1}$ is not the initial form of an element of the ideal of $S$ generated by $p_1,\ldots ,p_j$; if it were, we might replace $p_{j+1}$ by $p_{j+1}-\sum_{k=1}^ja_qp_q$ without affecting its role as generator of $P$ while increasing the $\mu$-value by an appropriate choice of the $a_q$, and continue until we either reach a contradiction with the minimality of the system of generators or obtain a generator with the desired property. This is essentially the same as the argument in the proof of lemma 4.6 of \cite{Te3}. Therefore the $\mu$-initial form of $p_{j+1}$ must involve a generator of the graded algebra of degree greater than $\mu (p_j)$, say $\overline\zeta_\ell$, so that ${\rm in}_\mu p_{j+1}=\lambda\overline\xi^\alpha\overline\zeta^e_\ell,\ \lambda\in k^*$, with $e\in \N$ and $\overline\xi^\alpha$ is a monomial in generators of ${\rm gr}_\mu S$ different from $\zeta_\ell$. Unless we have ${\rm in}_\mu p_{j+1}=\lambda\overline\zeta_\ell$, this implies that for a representative $\zeta_\ell\in P$ of $\overline\zeta_\ell$, we have $\mu(\zeta_\ell)<\mu (p_{j+1})$. On the other hand we can write $\zeta_\ell=\sum_{k=1}^ja_qp_q+\sum_{k=j+1}^sa_qp_q$ and the inequality we have just seen implies that the $\mu$-initial form of $\zeta_\ell$ must come from the first sum. But then the $\mu$-initial form of $p_{j+1}$ is the initial form of an element of the ideal of $S$ generated by $p_1,\ldots ,p_j$, and we have a contradiction. This proves that the $\mu$-initial form of $p_{j+1}$ is part of a minimal system of generators of ${\rm gr}_\mu S$. On the other hand, the smallest generator of the semigroup of $\mu$ which is not in the semigroup $\langle\Gamma,{\rm in}_\mu p_1,\ldots ,{\rm in}_\mu p_j\rangle$ must, by the same argument as above, have a value $\geq\mu(p_{j+1})$, so that it is $\zeta_\ell$. Assertion 2) now follows by induction.\par\noindent To prove 3) we notice that among the elements $p_i$, those which minimally generate $PS_P$ cannot be in $P^{(2)}$, and that is the case for $p_1,\ldots p_{n-d}$ as we defined them because of the inequalities on valuations.
   \end{proof}
    \begin{lemma}\label{upgen} Given a finite subset $(\xi_i)_{i\in J}$ of a set $(\xi_i)_{i\in I}$ of representatives of the generators $(\overline\xi_i)_{i\in I}$ of the graded algebra ${\rm gr}_\nu R$, chosen in such a way that the valuative Cohen theorem is valid for them, and where $J$ is such that the $(\xi_i)_{i\in J}$ generate the maximal ideal $m_R$ of $R$, let us denote by $(\eta_i)_{i\in J}$ representatives in $S$ of the $(\xi_i)_{i\in J}$. The $(\eta_i)_{i\in J}$ generate the maximal ideal $m_S$ of $S$.
 \end{lemma}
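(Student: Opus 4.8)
The plan is to reduce the statement to Nakayama's lemma, the one non-formal input being the standing hypothesis, recalled just before Proposition \ref{compose}, that the dimension $n$ of $S=k[[x_1,\dots ,x_n]]$ is \emph{minimal}, i.e.\ equals the embedding dimension of $R$.

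First I would record what comes for free. Since $R=S/P$ and the images $\xi_i$ (for $i\in J$) of the $\eta_i$ generate $m_R=m_S/P$, we have $m_S=\sum_{i\in J}\eta_i S+P$; equivalently, the classes of the $\eta_i$ together with the image of $P$ span the $k$-vector space $m_S/m_S^2$. Next I would invoke the minimality hypothesis: the natural identification $m_R/m_R^2 = m_S/(m_S^2+P)$ gives $\dim_k m_S/(m_S^2+P)=\mathrm{embdim}(R)=n=\dim_k m_S/m_S^2$, whence $(P+m_S^2)/m_S^2=0$, that is $P\subseteq m_S^2$. (This is the same as saying that no minimal generator $p_q$ of $P$ lies outside $m_S^2$, which is precisely the content of the minimality of $n$.)

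Putting the two facts together, $m_S\subseteq \sum_{i\in J}\eta_i S+m_S^2$, and since $S$ is a noetherian local ring, Nakayama's lemma yields $m_S=\sum_{i\in J}\eta_i S$, as desired; in particular $|J|=n$ and the $(\eta_i)_{i\in J}$ form a regular system of parameters of $S$.

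There is no genuine obstacle here. The only point that requires attention is that the conclusion really uses the minimality of $\dim S$: without it one can only assert that the $\eta_i$ generate $m_S$ modulo $P$, not modulo $m_S^2$, and Nakayama would not apply.
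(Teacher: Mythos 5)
Your proof is correct and follows essentially the same route as the paper: both use the surjectivity of $(\eta_i)_{i\in J}$ modulo $P$, deduce $P\subseteq m_S^2$ from the minimality of $\dim S$ (equal to the embedding dimension of $R$), and conclude by Nakayama's lemma. Your version merely spells out the dimension count for $m_S/m_S^2$ a bit more explicitly.
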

 \begin{proof} Let $M$ denote the ideal of $S$ generated by the $(\eta_i)_{i\in J}$, $m_R$ and $m_S$ the maximal ideals of $R$ and $S$ respectively. By construction me have $M+P=m_S$ since $S/M+P=R/m_R=k$. But since the dimension of the regular ring $S$ is the embedding dimension of $R$, we must have the inclusion $P\subset m_S^2$. By Nakayama's lemma this implies $M=m_S$.
 \end{proof}
Given the rational valuation $\mu$ of rational rank $r+1< n$ on the power series ring $S=k[[x_1,\ldots, x_n]]$ which we have just built, the generators $\gamma_i$ of the semigroup $\Gamma$ are part of the minimal system of generators of the semigroup $\Delta$ of $\mu$. The other elements of the minimal system of generators of $\Delta$ are the $\mu(p_1),\ldots ,\mu(p_s)\in \Z\oplus\Phi$ by proposition \ref{compose}, and the other members of this minimal system of generators of $\Delta$ which are $>\mu(p_s)$  and which we denote by $\delta_a$.\par\noindent
We therefore have a surjection
$$k[(U_i)_{i\in I}, V_1,\ldots ,V_s, (W_a)_{a\in A}]\rightarrow {\rm gr}_\mu S, \ U_i\mapsto {\rm in}_\mu\eta_i, V_j\mapsto {\rm in}_\mu p_j, W_a\mapsto {\rm in}_\mu\zeta_a.$$
 By the valuative Cohen theorem we can lift the $\xi_i\in R$ to elements $\eta_i\in S$ and choose representatives $\zeta_a$ in $S$ for the elements of ${\rm gr}_\mu S$ of degree $\delta_a$ in such a way that the theorem applies, the $p_j$ being finite in number can be kept, and so we obtain a continuous surjective map of topological $k$-algebra $$\Pi\colon \widehat{k[(u_i)_{i\in I},v_1,\ldots ,v_s,(w_a)_{a\in A}]}\rightarrow S,\ u_i\mapsto \eta_i,\ v_j\mapsto p_j,\ w_a\mapsto \zeta_a,$$
to which we can apply Theorem \ref{regular} and obtain the following key
\begin{lemma}\label{goodinit} Let $\theta\in S$ be an element whose initial form in ${\rm gr}_\mu S$ is part \break of a minimal system of generators of this $k$-algebra and let \break$z\in ((u_i)_{i\in I},(v_j)_{1\leq j\leq s},(w_a)_{a\in A})$ be the corresponding variable. A topological generator $F_z$ of the kernel of $\Pi$ which contains $-z$ as a term according to Theorem \ref{regular} can be chosen so that the following holds: If $\theta$ is one of the $\eta_i$ or one of the $p_j$, the initial binomial of $F_z$ involves only the variables $(u_i)_{i\in I}$. If $\theta$ is one of the $\zeta_a$, each term of the initial binomial of $F_z$ involves some of the $v_j$.
\end{lemma}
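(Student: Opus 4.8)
The plan is to run everything through the lexicographic $\mathbf{Z}\oplus\Phi$-grading of ${\rm gr}_\mu S$, whose first coordinate is the $\mu_1$-value: this coordinate vanishes on a monomial in the $u_i$ alone and is $\geq 1$ on any monomial involving a $v_j$ or a $w_a$, since $\mu_1(p_j)\geq 1$ and, by Proposition \ref{compose}, $\mu_1(\delta_a)\geq 1$ (as $\delta_a>\mu(p_s)$ and $\mu_1(p_s)\geq 1$). By Lemma \ref{upgen} the variables that may be dropped when Theorem \ref{regular} is applied to $\Pi$ are precisely the $u_i$ with $i\in I\setminus J$, all the $v_j$, and all the $w_a$, and for each such $z$ Proposition \ref{lin} produces a topological generator $F_z=u^{m^z}-\lambda_z u^{n^z}-g_z-z$ whose initial binomial has weight strictly below $w(z)$, with every term of $g_z$ of weight between $w(u^{m^z})$ and $w(z)$. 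The case $\theta=\eta_i$ (so $z=u_i$, $i\in I\setminus J$) is then immediate: $w(u_i)=\gamma_i$ has $\mu_1$-value $0$, so any element of $\Phi_{\geq 0}$ which is $<(0,\gamma_i)$ in the lexicographic order also has $\mu_1$-value $0$; hence both monomials of the initial binomial of $F_{u_i}$ have $\mu_1$-value $0$ and involve only the $u_i$, and no adjustment is needed.

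For $\theta=p_{j_0}$ (so $z=v_{j_0}$): since $\delta_a>\mu(p_s)\geq\mu(p_{j_0})$ for every $a$, no $w_a$ can occur in the initial binomial, which therefore lies in $k[(U_i)_{i\in I},V_1,\dots,V_{j_0-1}]$, and by homogeneity its two monomials have a common $\mu_1$-value $e$. If $e=0$ they are pure in the $u_i$ and we are done. If $e\geq 1$, then $\Pi$ carries both monomials of the initial binomial, as well as the non-pure-$u$ part $g_z^{(v)}$ of $g_z$, into the ideal $(p_1,\dots,p_{j_0-1})S$; feeding this into the identity $\Pi(F_{v_{j_0}})=0$ yields $p_{j_0}\equiv -\Pi(g_z^{(u)})\pmod{(p_1,\dots,p_{j_0-1})S}$, where $g_z^{(u)}$ is the pure-$u$ part of $g_z$. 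The plan is to replace $p_{j_0}$ by $p_{j_0}':=-\Pi(g_z^{(u)})$ — legitimate as a minimal generator of $P$ since the modification is by an element of $(p_1,\dots,p_{j_0-1})S$ — and note that $p_{j_0}'$ is now the image of a \emph{pure-$u$} series while still lying in $P$ (an instance of the abyssal phenomenon of Remark \ref{choice}: a series in variables of $\mu_1$-value $0$ whose image has $\mu_1$-value $\geq 1$). Re-running Proposition \ref{lin} then produces an $F_{v_{j_0}}$ whose initial binomial is forced to be pure in the $u_i$; since $\mu(p_{j_0}')\leq\mu(p_{j_0})$ and $\Delta$ is well ordered (Neumann's theorem), iterating this cleaning over $j_0$ in order of increasing $\mu$-value terminates and is compatible with Proposition \ref{compose}.

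For $\theta=\zeta_{a_0}$ (so $z=w_{a_0}$): here $\mu_1(\zeta_{a_0})=\mu_1(\delta_{a_0})\geq 1$, so $\zeta_{a_0}\in P$ and we may write $\zeta_{a_0}=\sum_{k=1}^{s}h_k p_k$; lifting the $h_k$ through the surjection $\Pi$ gives an element $w_{a_0}-\sum_k\widetilde h_k v_k$ of $\ker\Pi$ in which every monomial other than $w_{a_0}$ contains some $v_k$. I claim $w\!\left(\sum_k\widetilde h_k v_k\right)<\delta_{a_0}$: any surviving monomial of weight $\delta_{a_0}$ there involves some $v_j$, hence writes $\delta_{a_0}$ as $\mu(p_j)$ plus an element of $\Delta$, which is either equality $\delta_{a_0}=\mu(p_j)$ (impossible since $\delta_{a_0}>\mu(p_s)$) or a decomposition of $\delta_{a_0}$ into at least two generators of $\Delta$, contradicting minimality. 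Therefore the initial form of $w_{a_0}-\sum_k\widetilde h_k v_k$ equals ${\rm in}_w\!\left(\sum_k\widetilde h_k v_k\right)$, each of whose monomials involves a $v_k$, and reducing this element to the normal form $(F_{w_{a_0}})$ while keeping the leading terms in the stratum ``every monomial divisible by some $V_j$'' produces the asserted equation.

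The main obstacle is the reduction step common to the last two cases: one must run the transfinite reduction of Theorem \ref{gener} (or of Proposition \ref{lin}) on $v_{j_0}-\Pi^{-1}$-type elements while guaranteeing that the leading term of every partial remainder stays in the prescribed stratum — pure in the $u_i$, respectively divisible by some $V_j$ — and that the terminal element genuinely has the shape $(F_z)$ with the linear term $-z$. This forces one to choose the standard-basis elements used in the reduction so that subtracting their isobaric multiples never introduces a forbidden monomial of smaller weight, which is exactly the sort of control supplied by Proposition \ref{finite} together with the $\mathbf{Z}\oplus\Phi$-homogeneity of the initial binomials; checking, in the $p_{j_0}$ case, that the successive replacements of the $p_j$'s do not disturb the choices made in Proposition \ref{compose} is the remaining piece of bookkeeping.
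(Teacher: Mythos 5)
Your lexicographic set-up (grading everything by the $\mu_1$-degree, which is $0$ exactly on monomials in the $u_i$ alone) is the right underlying idea, and your treatment of the case $\theta=\eta_i$ is correct; but both remaining cases have genuine gaps. For $\theta=p_{j_0}$ you stop one step short of your own observation and then take a wrong turn. If the initial binomial of $F_{v_{j_0}}$ had $\mu_1$-degree $e\geq 1$, then by the overweight condition of Proposition \ref{lin} every term of $g_z$ has weight strictly larger than that of the binomial in $(\Z\oplus\Phi)_{lex}$, hence also has first coordinate $\geq 1$: there is no pure-$u$ part at all, $g_z^{(u)}=0$. Applying $\Pi$ then gives $p_{j_0}\in(p_1,\ldots,p_{j_0-1})S$ outright, contradicting the minimality of the system of generators of $P$; this is the paper's argument, and it shows the case $e\geq 1$ cannot occur, so no modification of the $p_j$ is needed. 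Your proposed repair, replacing $p_{j_0}$ by $p'_{j_0}=-\Pi(g_z^{(u)})$, therefore replaces it by $0$ (in any case by an element of $(p_1,\ldots,p_{j_0-1})S$), which cannot serve as a minimal generator; and even granting a nonzero $g_z^{(u)}$ you never verify that $p'_{j_0}\notin(p_1,\ldots,p_{j_0-1})$, that its $\mu$-initial form is still part of a minimal system of generators of ${\rm gr}_\mu S$, or that the construction of $\Pi$ (Proposition \ref{compose} and the valuative Cohen choices) survives the change; the termination claim based on $\mu(p'_{j_0})\leq\mu(p_{j_0})$ and well-ordering proves nothing, since the inequality is not strict.

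For $\theta=\zeta_{a_0}$ your starting element $w_{a_0}-\sum_k\widetilde h_kv_k\in\ker\Pi$ is the same as the paper's, but the step you yourself flag as ``the main obstacle'' is exactly the missing proof: you do not show how a reduction of this element yields, or identifies among the chosen generators, an $F_z$ containing $-w_{a_0}$ as a term whose initial binomial has no pure-$u$ monomial; keeping the leading terms ``in the stratum'' during the reduction is asserted, not established, and Proposition \ref{finite} does not by itself supply that control. The paper closes this differently and without any new reduction: expand $w_{a_0}-\sum_k\widetilde h_kv_k=\sum_\ell A_\ell F_\ell$ by Theorem \ref{gener} in the already-fixed topological generators. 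Every monomial on the left has $\mu_1$-degree $\geq 1$, and since weights only increase along the transfinite expansion and the order is lexicographic, no $A_\ell F_\ell$ occurring in it can contain a term purely in the $u_i$. The bare term $w_{a_0}$ must arise from some $A_\ell F_\ell$ with $A_\ell$ having a constant term and $F_\ell$ containing $\pm w_{a_0}$ as a term; if that $F_\ell$ had a pure-$u$ initial binomial, then $A_\ell F_\ell$ would contain pure-$u$ terms, a contradiction. Hence one of the existing generators already has the required shape. As written, your proposal leaves both the $p_j$ and the $\zeta_a$ cases unproved, even though the conclusion is correct and your grading argument is the right tool for it.
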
 
\begin{proof} If $\theta$ is one of the $\eta_i$ the result is clear, since the variables in the initial binomial must be of value less than that of $\eta_i$. If $\theta$ is one of the $p_j$, say $p_\ell$, there cannot even exist a $F_z$ whose initial binomial is not in $k[(u_i)_{i\in I}]$ because if that was the case, both terms of the initial binomial would contain a monomial in the $p_j$ and then $p_\ell$ would be in the ideal generated by the $p_j$ of smaller value, which contradicts the minimality of the system of generators. Now if $w_a$ corresponds to $\zeta_a\in P$ which is not one of the $p_j$, there exist $a_i\in \widehat{k[(u_i)_{i\in I},v_1,\ldots ,v_s,(w_a)_{a\in A}]}$ such that $w_a-\sum_{j=1}^sa_jv_j$ is in ${\rm ker}\Pi$ and is not zero. The process of expressing this difference as a combination 
$$w_a-\sum_{j=1}^sa_jv_j=\sum A_\ell F_\ell$$ of the topological generators of ${\rm ker}\Pi$ begins with expressing the initial form for the weight of that difference as a combination of binomials which are part of a generating system for ${\rm ker}{\rm gr}\Pi$. In a minimal such expression there can be no term which is a monomial in the $U_i$ because there is no such term in the initial form of the difference. Since the values can only increase from there and in view of the form of the valuation $\mu$, the sum $\sum A_\ell F_\ell$ can contain no term using only the $u_i$. On the other hand, since $w_a$ is a term in the difference it must appear as a term in that sum, and we know that one of the $F_\ell$ must contain $w_a$ (or $-w_a$) as a term. It follow that one of the $F_\ell$ appearing in the sum, which by construction has an initial binomial which uses some of the $v_j, w_b$, contains $w_a$ (or $-w_a$)  as a term. 
\end{proof}
Keeping the notations of this section, let us now denote by $\Pp$ the closure of the ideal of $\widehat{k[(u_i)_{i\in I},v_1,\ldots ,v_s,(w_a)_{a\in A}]}$ generated by the $(v_j)_{j=1,\ldots ,s}$ and the $(w_a)_{a\in A}$. We have a commutative diagram:
 \[\xymatrix { 
       \widehat{k[(u_i)_{i\in I},v_1,\ldots ,v_s,(w_a)_{a\in A}]}\ar[d]  \ar[r]^{\ \ \ \ \ \ \ \ \ \ \ \ \ \ \ \ \  \Pi}  &S \ar[d] \\
               \widehat{k[(u_i)_{i\in I}]} \ar[r]^{\pi}&  R }\]
               where the left vertical arrow is the quotient by the ideal $\Pp$. It follows from Lemma \ref{goodinit} that we can now prove:
\begin{theorem}{\rm (Structure of rational valuations)}\label{structure} Let $R$ be a complete equicharacteristic noetherian local domain and let $k\subset R$ be a field of representatives of the residue field of $R$. Let $\nu$ be a rational valuation of $R$ and let $(\xi_i)_{i\in I}$ be representatives in $R$ of a minimal set of generators $(\overline \xi_i)_{i\in I}$ of the $k$-algebra ${\rm gr}_\nu R$ for which the valuative Cohen theorem holds. Let $J\subset I$ be a minimal set such that the $(\xi_i)_{i\in J}$ generate the maximal ideal of $R$ and let $(U^{m_\ell}-\lambda_\ell U^{n_\ell})_{\ell\in L}$ be a minimal set of generators of the kernel of the surjective map of $k$-algebras $k[(U_i)_{i\in I}]\to {\rm gr}_\nu R,\ \ U_i\mapsto\overline\xi_i$.\par
Then a set of topological generators for the kernel of the continuous surjective map $\pi\colon\widehat{k[(u_i)_{i\in I}]}\to R$ given by the valuative Cohen theorem is as follows: 
\begin{itemize}
\item For each $i\in I\setminus J$, a generator of the form $$F_i=u^{m^i}-\lambda_i u^{n^i}-g_i((u_q)_{q<i})-u_i,$$ where the initial binomial is one of the  $(U^{m_\ell}-\lambda_\ell U^{n_\ell})_{\ell\in L}$, the series $g_i((u_q)_{q<i})\in \widehat{k[(u_i)_{i\in I}]}$ depends only on variables of index $<i$ and the weight of each of its terms is $>w(u^{m^i})$ and $<w(u_i)$.
\item A finite subset of the complement in $L$ of the set of binomials used above is in bijection with generators of the form $$F_q=u^{m^q}-\lambda_q u^{n^q}-g_q(u),$$ with $g_q(u) \in \widehat{k[(u_i)_{i\in I}]}$ and the weight of each of its terms is $>w(u^{m^q})$.
\end{itemize}
Moreover, each of these generators uses only finitely many of the variables $u_i$.
\end{theorem}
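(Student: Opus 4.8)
The plan is to obtain the theorem from its already-proved regular case, Theorem \ref{regular}, applied not to $R$ but to the power series ring $S=k[[x_1,\ldots,x_n]]$ together with the lifted rational valuation $\mu$ constructed above, and then to transport the resulting system of equations down along the quotient $S\to R=S/P$ by means of the commutative square and the ideal $\Pp$ introduced just before the statement. The key preliminary remark is order-theoretic: by Proposition \ref{compose} the value group of $\mu$ is $(\Z\oplus\Phi)_{lex}$, the $\gamma_i$ have $\mu_1$-value $0$, while $\mu(p_1)<\cdots<\mu(p_s)$ and all the $\delta_a$ exceed $\mu(p_s)$ and have positive $\mu_1$-value; hence in the well-ordered index set of the variables of $\widehat{k[(u_i)_{i\in I},v_1,\ldots,v_s,(w_a)_{a\in A}]}$ every $u_i$ precedes every $v_j$, which in turn precede every $w_a$.

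By Lemma \ref{upgen} the $(\eta_i)_{i\in J}$ form a minimal system of generators of the maximal ideal of the regular ring $S$, so $J$ plays for $S$ the role it plays for $R$ and Theorem \ref{regular} applies to $\Pi$: $\ker\Pi$ is the closure of the ideal generated by topological generators $F_z$, one for each variable $z$ outside $\{u_i\mid i\in J\}$, each of the shape of Proposition \ref{lin}. By Lemma \ref{goodinit}, when $z$ is a $u_i$ with $i\in I\setminus J$ or a $v_j$, the initial binomial of $F_z$ lies in $k[(U_i)_{i\in I}]$, while when $z$ is a $w_a$ every monomial of its initial binomial is divisible by some $v_j$. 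Together with the ordering this forces the correcting series of $F_{u_i}$ to involve only variables $u_q$ with $q<i$, so that $F_{u_i}$ already has precisely the form $(F_i)$ of the statement and uses only finitely many variables.

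Next I would descend. Since $P$ is $\mu$-adically closed in $S$ and $\Pi(\Pp)=P$, one gets $\Pi^{-1}(P)=\ker\Pi+\Pp$, hence $\ker\pi=q(\ker\Pi)$ where $q$ denotes the quotient by $\Pp$; as $q$ respects transfinite sums and $\ker\pi$ is closed, $\ker\pi$ is then the closure of the ideal generated by the images $q(F_z)$. Here $q(F_{u_i})=F_i$ for $i\in I\setminus J$; each of the finitely many $q(F_{v_j})$, $1\le j\le s$, has the form $u^{m^j}-\lambda_j u^{n^j}-g_j(u)$ with every term of $g_j$ of weight $>w(u^{m^j})$; and each $q(F_{w_a})$ is an element $h_a\in\widehat{k[(u_i)_{i\in I}]}$ — the surviving purely-$u$ part of the correcting series of $F_{w_a}$ — whose would-be initial binomial was annihilated by $q$ because it carried a factor among the $v_j$.

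It remains to prove that the $h_a$ are superfluous, that is, that they lie in the closure of the ideal generated by the $F_i$ and the $q(F_{v_j})$; this is the heart of the matter. The route is to show that, once the $F_i$ $(i\in I\setminus J)$ have been accounted for, only finitely many further binomial relations are needed to generate $\ker({\rm gr}_w\pi)$, and that these can be taken to be the initial binomials of finitely many of the $q(F_{v_j})$ — Lemma \ref{goodinit} makes this plausible, since every extra binomial occurring along the way carries a factor among the $V_j$ and hence disappears upon passage to $k[(U_i)_{i\in I}]$, while the noetherianity of $R$ is what bounds the surplus. Granting this, each $h_a\in\ker\pi$ is driven to $0$ by the transfinite standard-basis reduction of Theorem \ref{gener}, subtracting isobaric multiples of the $F_i$ and the $q(F_{v_j})$, so that $h_a$ lies in the required closure; taking for $L$ the resulting minimal generating set of $\ker({\rm gr}_w\pi)$ then exhibits the $F_i$ as the first family and the surviving $q(F_{v_j})$ as the second, finite, family of topological generators, the bound on the number of variables being inherited from the $F_z$. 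I expect this last step to be the main obstacle: the naive assertion that a generating set of a sublattice contracts onto a generating set of its intersection with a coordinate sublattice is false, so the finiteness of the extra binomial generators cannot be extracted from lattice combinatorics alone but must be obtained from the fact that the auxiliary directions $v_j,w_a$ arise from the \emph{regular} local ring $S_P$ with a finitely generated defining ideal $P$.
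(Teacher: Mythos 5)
Your overall route is the paper's: apply Theorem \ref{regular} to $\Pi$ on the power series ring $S$, invoke Lemma \ref{goodinit}, and reduce the resulting topological generators modulo $\Pp$. The genuine gap is in your treatment of the generators $F_{w_a}$. You assert that their reduction modulo $\Pp$ leaves a possibly nonzero purely-$u$ remainder $h_a$ (``the surviving purely-$u$ part of the correcting series''), and you then make the elimination of these $h_a$ the ``heart of the matter'', offering only a sketch --- finitely many extra binomials attached to the $v_j$, a transfinite reduction as in Theorem \ref{gener} --- which you yourself flag as the main obstacle and do not establish. As written, the proof is therefore incomplete at exactly that point.

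But that step is not needed, because $h_a=0$: the reduction of $F_{w_a}$ modulo $\Pp$ vanishes identically, which is how the paper concludes in one line. Indeed, by Lemma \ref{goodinit} every monomial of the initial binomial of $F_{w_a}$ involves some $v_j$; since by Proposition \ref{compose} the value group of $\mu$ is $(\Z\oplus\Phi)_{lex}$ and the weights of the $v_j$ and $w_b$ have strictly positive first component while the weights of the $u_i$ have first component $0$, the weight of that initial binomial has first component $\geq 1$. Every further term of $F_{w_a}$ has strictly larger weight in the lexicographic order, hence also has weight with first component $\geq 1$, hence involves at least one $v_j$ or $w_b$ and lies in $\Pp$. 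This is precisely the order-theoretic remark you make at the outset, but you fail to apply it to the overweight tails of the $F_{w_a}$. Once it is applied, all terms of $F_{w_a}$ --- not just the initial binomial and the term $-w_a$ --- are killed by the quotient, and the generators of $\ker\pi$ are immediately those of the statement: the $F_i$ coming from the $F_{u_i}$, which pass unchanged, and the finitely many $F_q$ coming from the $F_{v_j}$, whose initial binomials are pure in the $u_i$. Your speculative final paragraph about contracting binomial generating sets can simply be deleted.
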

\begin{proof}  From the diagram above we see that we have the equality $\Pi^{-1}(P)={\rm ker}\Pi+\Pp$ and that ${\rm ker}\pi$ is the image of this ideal in $\widehat{k[(u_i)_{i\in I}]}$. So we obtain topological generators of ${\rm ker}\pi$ by reducing the topological generators of ${\rm ker}\Pi$ modulo $\Pp$. By Lemma \ref{goodinit}, those which involve a $-w_a$ as a term vanish entirely,  those which involve a $-v_q$ as a last term do not vanish since their initial binomial involves only the $u_i$ and become the $F_q$ of the theorem. Those which involve a $-u_i$ as a last term pass to the quotient without modification and become the $F_i\in {\rm ker}\pi$.    
\end{proof}
\begin{remark}1) One cannot expect that in general the initial binomials of the series $F_i, F_q$ are all the $(U^{m^\ell}-\lambda_\ell U^{n^\ell})_{\ell\in L}$. As in the regular case, there is no reason why these topological generators should be a (generalized) standard basis.\par\noindent
2) We know of course that we can write each $p_j$ as a series in the $\tilde\eta$. \textit{The point is to write it using only elimination through equations whose initial forms are binomials belonging to our set of generators of the kernel of ${\rm gr}_w\Pi$. This is what will allow us to obtain a quotient of $R$ as an overweight deformation of a finite number of these binomials.}\par\noindent
3) The theorem shows the persistence of the abyssal phenomenon of Remark \ref{choice} even when $R$ is not regular.\par\noindent
4) An interpretation of the theorem is that it presents the valuation $\nu$ on $R$ as the image of the \textit{monomial} valuation on $\widehat{k[(u_i)_{i\in I}]}$ given by the weights, in the sense that the value of an element of $R$ is the maximum weight of its representatives in $\widehat{k[(u_i)_{i\in I}]}$ (see \cite[Proposition 3.3]{Te3}). \par\noindent
5) The semigroup $\Delta$ of values of the valuation $\mu$ on $S$ requires more generators than the $(0,\gamma_i)$, the $\mu(p_j)$ and the $(f,\gamma_i),\ f\in\N, f\geq 1$, and their description would be interesting. We do not need it here thanks to Lemma \ref{goodinit}.
\end{remark}
\end{subsection}
\begin{subsection}{Equations for the toric degeneration}
We can now write the equations for the toric degeneration of $R$ to ${\rm gr}_\nu R$ in the natural parameters, the $v^\phi, \phi>0$.\par
Recall from \cite[Section 2.3]{Te1} that the algebra encoding the toric degeneration of the ring $R$ to its associated graded ring ${\rm gr}_\nu R$ is the valuation algebra $$\Aa_\nu(R)=\bigoplus_{\phi\in\Phi} \Pp_\phi(R)v^{-\phi}\subset R[v^\Phi].$$
Having fixed a field of representatives $k\subset R$ of the residue field, the composed injection $k[v^{\Phi_{\geq 0}}]\subset R[v^{\Phi_{\geq 0}}]\subset \Aa_\nu(R)$ stemming from the fact that $\Pp_\phi(R)=R$ for $\phi\leq 0$ gives $\Aa_\nu(R)$ the structure of a faithfully flat $k[v^{\Phi_{\geq 0}}]$-algebra and according to \cite[Proposition 2.2]{Te1}, the map $${\rm Spec}\Aa_\nu (R)\rightarrow {\rm Spec}k[v^{\Phi_{\geq 0}}]$$ has special fiber ${\rm Spec}{\rm gr}_\nu R$ and its general fiber is isomorphic to $R$ in the sense that, denoting by $(v^{\Phi_+})$ the multiplicative subset of $R[v^{\Phi_{\geq 0}}]$ consisting of elements $v^\phi,\ \phi>0$, the natural inclusion $R[v^{\Phi_{\geq 0}}]\subset \Aa_\nu (R)$ extends to an isomorphism after localization:
$$(v^{\Phi_+})^{-1}R[v^{\Phi_{\geq 0}}]\simeq (v^{\Phi_+})^{-1}\Aa_\nu(R).$$
Let us consider the $k[v^{\Phi_{\geq 0}}]$-algebra $k[v^{\Phi_{\geq 0}}]\widehat{[(\tilde u_i)_{i\in I}]}=k[v^{\Phi_{\geq 0}}]\otimes_k\widehat{k[(\tilde u_i)_{i\in I}]}$, which is the same construction as $\widehat{k[(u_i)_{i\in I}]}$ in section \ref{reminder} but where the series have coefficients in $k[v^{\Phi_{\geq 0}}]$. The variable $\tilde u_i$ still has weight $\gamma_i$. Remember from \cite[Section 2.4]{Te1} that we can define on the algebra $\Aa_\nu(R)$ a valuation $\nu_\Aa$ by $$\nu_\Aa(\sum x_\phi v^{-\phi})={\rm min}( \nu(x_\phi)).$$
\begin{lemma}\label{dyngen}
With the notations of Theorem \ref{cohen}, the $(\xi_iv^{-\gamma_i})_{i\in I}$ constitute a minimal system of homogeneous topological generators of the $k[v^{\Phi_{\geq 0}}]$-algebra $\Aa_\nu (R)$ in the sense that the map of $k[v^{\Phi_{\geq 0}}]$-algebras
$$\tilde \pi\colon k[v^{\Phi_{\geq 0}}]\widehat{[(\tilde u_i)_{i\in I}]}\longrightarrow \Aa_\nu(R),\ \ \tilde u_i\mapsto \xi_iv^{-\gamma_i}$$
is surjective and continuous for the topologies defined by the weight and the valuation $\nu_\Aa$ respectively.
\end{lemma}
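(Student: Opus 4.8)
The plan is to run the proof of the valuative Cohen theorem (Theorem \ref{cohen}) almost verbatim, with the coefficient field $k$ replaced by the monoid algebra $k[v^{\Phi_{\geq 0}}]$ and with $(R,\nu)$ replaced by $(\Aa_\nu(R),\nu_\Aa)$, the extra variable $v$ carrying weight $0$ and being dragged along passively. The first point is that $\tilde\pi$ is a well-defined continuous map of $k[v^{\Phi_{\geq 0}}]$-algebras. On a monomial one has $\tilde\pi(v^\phi\tilde u^e)=\xi^e v^{\phi-w(u^e)}$, and since $\nu(\xi^e)=\sum_i e_i\gamma_i=w(u^e)$ and $\phi\geq 0$ this element lies in $\Aa_\nu(R)$, with $\nu_\Aa(\tilde\pi(v^\phi\tilde u^e))=\nu(\xi^e)=w(\tilde u^e)$; thus $\tilde\pi$ is isobaric on monomials, and the combinatorial finiteness of $\Gamma$ together with Lemma \ref{appear} guarantees that transfinite sums in the source are carried to $\nu_\Aa$-convergent sums, so $\tilde\pi$ is well defined and continuous. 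The $\xi_iv^{-\gamma_i}$ are homogeneous of degree $\gamma_i$ for the $\Phi$-grading of $\Aa_\nu(R)$, and minimality follows by passing to the quotient by the ideal generated by the $v^\phi$ with $\phi>0$, under which $\Aa_\nu(R)$ specializes to ${\rm gr}_\nu R$ and $\xi_iv^{-\gamma_i}$ to $\overline\xi_i$: the $\overline\xi_i$ form by hypothesis a minimal system of generators of the $k$-algebra ${\rm gr}_\nu R$.

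For surjectivity I would first reduce to homogeneous elements. An element of $\Aa_\nu(R)$ is a finite $k[v^{\Phi_{\geq 0}}]$-linear combination of elements $xv^{-\phi}$ with $x\in\Pp_\phi(R)$, and $xv^{-\phi}=v^{\nu(x)-\phi}\cdot(xv^{-\nu(x)})$ with $v^{\nu(x)-\phi}\in k[v^{\Phi_{\geq 0}}]$, so it suffices to reach $xv^{-\nu(x)}$ for $x\in R\setminus\{0\}$. Fix such an $x$, set $\phi=\nu(x)$, and use that ${\rm gr}_w\pi$ is surjective (Theorem \ref{cohen}): choose a $w$-isobaric polynomial $P_0$ of weight $\phi$ with ${\rm gr}_w\pi(P_0)={\rm in}_\nu x$. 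Then $\tilde\pi(P_0((\tilde u_i)))=P_0((\xi_i))v^{-\phi}$ and $\nu(x-P_0((\xi_i)))>\phi$; iterating with $x_0=x$, $x_{\sigma+1}=x_\sigma-P_\sigma((\xi_i))$ and $x_\lambda=x-\sum_{\sigma<\lambda}P_\sigma((\xi_i))$ at limits produces a transfinite sequence in $\Pp_\phi(R)$ with $\nu(x_\sigma)$ strictly increasing and $P_\sigma$ isobaric of weight $\nu(x_\sigma)$. Distinct $\sigma$ give $P_\sigma$ of distinct weights, hence involving disjoint sets of monomials, so each $u$-monomial occurs in at most one $P_\sigma$; by combinatorial finiteness the series $\sum_\sigma v^{\nu(x_\sigma)-\phi}P_\sigma((\tilde u_i))$ converges in $k[v^{\Phi_{\geq 0}}]\widehat{[(\tilde u_i)_{i\in I}]}$, and by Corollary \ref{chev} the series $\sum_\sigma P_\sigma((\xi_i))$ converges in $R$, so the $x_\lambda$ are well defined. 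A strictly increasing transfinite sequence of elements of the set $\Phi_{\geq 0}$ cannot run through all ordinals, so the process stops at some stage $T$ with $x_T=0$, and then $xv^{-\phi}=\tilde\pi\bigl(\sum_{\sigma<T}v^{\nu(x_\sigma)-\phi}P_\sigma((\tilde u_i))\bigr)$, proving surjectivity.

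I expect the main obstacle to be precisely the convergence bookkeeping of the second paragraph: verifying that the candidate preimage is a genuine element of the source ring and that $\tilde\pi$ carries it back to $xv^{-\phi}$. In rank $>1$ this cannot be handled by the naive statement that $m$-adic orders tend to infinity (Example \ref{nochev}); it rests on the valuative Chevalley theorem and on the careful choice of the representatives $\xi_i$ supplied by Theorem \ref{cohen}, exactly as in the proof of Theorem \ref{gener}.
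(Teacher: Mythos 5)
Your proposal is correct in substance, but it takes a longer route than the paper. The paper's own proof treats the surjectivity statement of Theorem \ref{cohen} as a black box: given $x\in\Pp_\phi(R)$, it quotes the expansion $x=\sum_e d_e\xi^e$ with monomials of value $\geq\phi$ furnished by the valuative Cohen theorem, and then simply homogenizes, writing $xv^{-\phi}=\sum_e d_e v^{\eta_e}\,\xi^ev^{-\nu(\xi^e)}$ with $\eta_e=\nu(\xi^e)-\phi\geq 0$, so that $\sum_e d_ev^{\eta_e}\tilde u^{\,e}$ is an explicit preimage under $\tilde\pi$; minimality and continuity are then immediate (your reduction modulo the ideal $(v^\phi)_{\phi>0}$ to ${\rm gr}_\nu R$ is a clean way to record the minimality the paper calls ``clear''). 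You instead re-derive that expansion from scratch by transfinite approximation: lifting ${\rm in}_\nu x$ through ${\rm gr}_w\pi$ by an isobaric $P_\sigma$, subtracting, and using Corollary \ref{chev} for convergence of $\sum_\sigma P_\sigma((\xi_i))$ in $R$. This reproduces the proof of surjectivity of $\pi$ from \cite{Te3} inside the proof of Lemma \ref{dyngen}, so it is redundant given that Theorem \ref{cohen} is available as a hypothesis; moreover the limit-ordinal step (that $x_\lambda$ is well defined and that $\nu(x_\lambda)>\nu(x_\sigma)$ for all $\sigma<\lambda$, so the values keep strictly increasing) silently uses the $m$-adic closedness of the valuation ideals $\Pp^+_\phi$ in the noetherian complete ring $R$ — exactly the bookkeeping that citing Theorem \ref{cohen} lets you avoid. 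In short: both arguments work and rest on the same machinery; the paper's buys brevity by reusing the already-proved expansion, while yours makes the approximation mechanism explicit at the cost of duplicating it.
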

\begin{proof} By the valuative Cohen theorem (Theorem \ref{cohen}) if $x\in\Pp_\phi(R)$ we can write $x=\sum_e d_e\xi^e$ where the $d_e\in k^*$ and the $\xi^e$ are monomials in the $\xi_i$ of increasing valuations, with the minimum value being $\geq \phi$. Let us write $-\nu(x)=-\nu(\xi^e)+\eta_e$ with $\eta_e\geq 0$. Note that the $\eta_e$ corresponding to the smallest index in the sum is zero if $\nu(x)=\phi$ and all the others are $>0$. Then we can write $xv^{-\phi}=\sum_ed_ev^{\eta_e}\xi^ev^{-\nu(\xi^e)}$ and since the $\xi^ev^{-\nu(\xi^e)}$ are monomials in the $\xi_iv^{-\gamma_i}$ and the minimality is clear as well as the continuity, this proves the result. \end{proof}
We now think of the $\tilde u_i=\xi_iv^{-\gamma_i}$ as defining an embedding of the total space of the toric degeneration into the space corresponding to the algebra $k[v^{\Phi_{\geq 0}}]\widehat{[(\tilde u_i)_{i\in I}]}$. We are seeking equations for the image of this embedding, that is, generators for the kernel of the map $\tilde \pi$ of Lemma \ref{dyngen} above.\par\noindent
If a series $H((\tilde u_i))_{i\in I}$ is in the kernel, we see that by definition of the map, we have $H((v^{\gamma_i}\tilde u_i)_{i\in I})=0$ in $R$, and conversely. so that the change of variables $u_i\mapsto v^{\gamma_i}\tilde u_i$, which is well defined over $(v^{\Phi_+})^{-1}k[v^{\Phi_{\geq 0}}]$, carries the kernel of the map $\pi$ of Theorem \ref{cohen} to the kernel of $\tilde\pi$. It realizes the isomorphism, valid over $(v^{\Phi_+})^{-1}k[v^{\Phi_{\geq 0}}]$  of the fiber of ${\rm Spec}\Aa_\nu (R)\to {\rm Spec}k[v^{\Phi_{\geq 0}}]$ over a point where "all the $v^\phi,\ \phi>0$ are $\neq 0$" with the space corresponding to $R$.\par\noindent  Now for each topological generator $F_\ell=u^{m^\ell}-\lambda_\ell u^{n^\ell}+\cdots$ of the kernel of $\pi$ we see that $F_\ell (v^\gamma \tilde u)$, with the obvious contracted notation, is divisible, as an element of $k[v^{\Phi_{\geq 0}}]\widehat{[(\tilde u_i)_{i\in I}]}$, by $v^{w(m^\ell)}=v^{w(n^\ell)}$. The ideal of $k[v^{\Phi_{\geq 0}}]\widehat{[(\tilde u_i)_{i\in I}]}$ topologically generated by the $$\tilde F_\ell=v^{-w(m^\ell)}F_\ell(v^\gamma\tilde u)=\tilde u^{m^\ell}-\lambda_\ell \tilde u^{n^\ell}+\cdots$$ is contained in the kernel of $\tilde\pi$, is equal to it and also isomorphic to the kernel of $\pi$ over $(v^{\Phi_+})^{-1}k[v^{\Phi_{\geq 0}}]$, while the space it defines modulo the ideal $(v^{\Phi_{>0}})$ is the same as the space defined by $\Aa_\nu(R)$ modulo that ideal, namely ${\rm Spec}{\rm gr}_\nu R$. By flatness of $\Aa_\nu(R)$ over $k[v^{\Phi_{\geq 0}}]$, they have to coincide.\par\medskip
Combining this with Theorem \ref{structure} we have obtained the following:
\begin{theorem}
With the notations of Theorem \ref{structure}, a set of topological generators in $k[v^{\Phi_{\geq 0}}]\widehat{[(\tilde u_i)_{i\in I}]}$ for the ideal defining the total space of the degeneration of $R$ to ${\rm gr}_\nu R$ is as follows:
\begin{itemize}
\item For each $i\in I\setminus J$, a generator of the form $$\tilde F_i=\tilde u^{m^i}-\lambda_i \tilde u^{n^i}-\tilde g_i(v,(\tilde u_q)_{q<i})-v^{\gamma_i-w(u^{m^i)}}\tilde u_i,$$ where the initial binomial is one of the  $(U^{m_\ell}-\lambda_\ell U^{n_\ell})_{\ell\in L}$, the series $\tilde g_i(v,(\tilde u_q)_{q<i})\in k[v^{\Phi_{\geq 0}}]\widehat{[(\tilde u_i)_{i\in I}]}$ depends only on variables of index $<i$ and the weight of each of its terms is $>w(u^{m^i})$ and $<w(u_i)$.
\item A finite subset of the complement in $L$ of the set of binomials \break $(\tilde u^{m_\ell}-\lambda_\ell \tilde u^{n_\ell})_{\ell\in L}$ used by the $\tilde F_i$ is in bijection with generators of the form $$\tilde F_q=\tilde u^{m^q}-\lambda_q \tilde u^{n^q}-\tilde g_q(v,\tilde u),$$ with $\tilde g_q(v,\tilde u) \in k[v^{\Phi_{\geq 0}}]\widehat{[(\tilde u_i)_{i\in I}]}$ and the weight of each of its terms is $>w(\tilde u^{m^q})$.
\end{itemize}
Each series uses finitely many of the variables $\tilde u_i$ and each term appearing in the series $\tilde F_i$ or $\tilde F_q$ is of the form $c_ev^{w(\tilde u^e)-w(u^{m^\ell})}\tilde u^e$ or $c_ev^{w(\tilde u^e)-w(u^{m^q})}\tilde u^e$ with $c_e\in k^*$.
\end{theorem}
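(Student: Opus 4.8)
The discussion immediately preceding the statement already carries out most of the work, so the plan is short. The plan is to transport the presentation of ${\rm ker}\pi$ obtained in Theorem \ref{structure} across the change of variables $u_i\mapsto v^{\gamma_i}\tilde u_i$, and then to deduce, from the faithful flatness of $\Aa_\nu(R)$ over $k[v^{\Phi_{\geq 0}}]$, that the transported ideal is exactly the kernel of the surjection $\tilde\pi\colon k[v^{\Phi_{\geq 0}}]\widehat{[(\tilde u_i)_{i\in I}]}\to\Aa_\nu(R)$ of Lemma \ref{dyngen}. First I would observe that over the localisation $(v^{\Phi_+})^{-1}k[v^{\Phi_{\geq 0}}]$ the assignment $u_i\mapsto v^{\gamma_i}\tilde u_i$ defines an automorphism of $(v^{\Phi_+})^{-1}k[v^{\Phi_{\geq 0}}]\widehat{[(\tilde u_i)_{i\in I}]}$: each $v^{\gamma_i}$ is a unit there, and combinatorial finiteness of $\Gamma$ together with Corollary \ref{substitute}, applied over this coefficient ring, ensures that the substituted series still lie in the ring. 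Since $\tilde\pi(\tilde u_i)=\xi_i v^{-\gamma_i}=v^{-\gamma_i}\pi(u_i)$, a series $H$ lies in ${\rm ker}\tilde\pi$ precisely when $H$ is the image under this automorphism of an element of ${\rm ker}\pi$; hence the automorphism carries the localisation of ${\rm ker}\pi$ onto the localisation of ${\rm ker}\tilde\pi$.

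The substantive point is the integrality of the homogenised generators. For each topological generator $F_\ell=u^{m^\ell}-\lambda_\ell u^{n^\ell}+\sum_{w(p)>w(u^{m^\ell})}c_p u^p$ of ${\rm ker}\pi$ produced by Theorem \ref{structure}, the substitution sends $u^e$ to $v^{w(u^e)}\tilde u^e$, and since every monomial occurring in $F_\ell$ has weight $\geq w(u^{m^\ell})=w(u^{n^\ell})$, the series $F_\ell(v^\gamma\tilde u)$ is divisible in $k[v^{\Phi_{\geq 0}}]\widehat{[(\tilde u_i)_{i\in I}]}$ by $v^{w(u^{m^\ell})}$. Dividing yields a genuine element $\tilde F_\ell=\tilde u^{m^\ell}-\lambda_\ell\tilde u^{n^\ell}+\sum c_p v^{w(u^p)-w(u^{m^\ell})}\tilde u^p$ of that ring, and the ideal $\tilde{\mathcal J}$ topologically generated by the $\tilde F_\ell$ is contained in ${\rm ker}\tilde\pi$. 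Over $(v^{\Phi_+})^{-1}k[v^{\Phi_{\geq 0}}]$ the inverse substitution carries $\tilde{\mathcal J}$ to the localisation of ${\rm ker}\pi$, so $\tilde{\mathcal J}$ agrees with ${\rm ker}\tilde\pi$ there; modulo $(v^{\Phi_{>0}})$ the ideal $\tilde{\mathcal J}$ becomes the binomial ideal generated by the $U^{m^\ell}-\lambda_\ell U^{n^\ell}$, hence defines ${\rm Spec}{\rm gr}_\nu R$, which by \cite[Proposition 2.2]{Te1} is the special fibre of ${\rm Spec}\Aa_\nu(R)$. Thus the surjection $k[v^{\Phi_{\geq 0}}]\widehat{[(\tilde u_i)_{i\in I}]}/\tilde{\mathcal J}\twoheadrightarrow\Aa_\nu(R)$ is an isomorphism both after inverting $v^{\Phi_+}$ and after killing $v^{\Phi_{>0}}$; since $\Aa_\nu(R)$ is faithfully flat over $k[v^{\Phi_{\geq 0}}]$ this forces $\tilde{\mathcal J}={\rm ker}\tilde\pi$.

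Finally I would specialise to the concrete generators of Theorem \ref{structure}. For $i\in I\setminus J$, the generator $F_i=u^{m^i}-\lambda_i u^{n^i}-g_i((u_q)_{q<i})-u_i$, whose minimal-weight term is the binomial $u^{m^i}-\lambda_i u^{n^i}$, yields after substitution and division by $v^{w(u^{m^i})}$ the announced $\tilde F_i$: the term $-u_i=-v^{\gamma_i}\tilde u_i$ becomes $-v^{\gamma_i-w(u^{m^i})}\tilde u_i$ with $\gamma_i-w(u^{m^i})>0$ since $w(u_i)=\gamma_i>w(u^{m^i})$, and each term $c_e u^e$ of $g_i$ becomes $c_e v^{w(u^e)-w(u^{m^i})}\tilde u^e$ with $0<w(u^e)-w(u^{m^i})<\gamma_i-w(u^{m^i})$; the finitely many generators $F_q=u^{m^q}-\lambda_q u^{n^q}-g_q(u)$ give the $\tilde F_q$ in the same way. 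The bookkeeping on the exponents of $v$ is exactly the weight bookkeeping already established in Theorem \ref{structure}, and each series still uses only finitely many of the $\tilde u_i$, these properties being preserved by the substitution. I expect the main obstacle to be exactly the passage from the inclusion $\tilde{\mathcal J}\subseteq{\rm ker}\tilde\pi$ to equality, where one genuinely needs the faithful flatness of $\Aa_\nu(R)$ over $k[v^{\Phi_{\geq 0}}]$ together with the simultaneous identification of the generic and special fibres; everything else is formal manipulation within the framework of Section \ref{reminder}.
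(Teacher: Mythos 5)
Your proposal is correct and follows essentially the same route as the paper: homogenize the generators of ${\rm ker}\pi$ from Theorem \ref{structure} via $u_i\mapsto v^{\gamma_i}\tilde u_i$, divide each $F_\ell(v^\gamma\tilde u)$ by $v^{w(u^{m^\ell})}$ to land in $k[v^{\Phi_{\geq 0}}]\widehat{[(\tilde u_i)_{i\in I}]}$, observe containment in ${\rm ker}\tilde\pi$, and conclude equality from the identification of the generic fiber (over $(v^{\Phi_+})^{-1}k[v^{\Phi_{\geq 0}}]$) and of the special fiber ${\rm Spec\,gr}_\nu R$ together with the faithful flatness of $\Aa_\nu(R)$ over $k[v^{\Phi_{\geq 0}}]$. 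The exponent bookkeeping for the $\tilde F_i$ and $\tilde F_q$ matches the paper's as well.
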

\begin{example}
\end{example} Consider the overweight deformation, as in section \ref{ow}: $$y^2-x^3-u, \ u^2-x^{s-1}y$$ of the binomial ideal $y^2-x^3, u^2-x^{s-1}y$, where $x,y,u$ have weights $4,6,2s+1$ respectively, with $s\geq 6$. Set $\tilde x=v^{-4}x,\tilde y=v^{-6}y, \tilde u=v^{-(2s+1)}u$. \par\noindent Substituting $x=v^4\tilde x,y=v^6\tilde y,u=v^{2s+1}\tilde u$ in the equations, we obtain $v^{12}\tilde y^2-v^{12}\tilde x^3-v^{2s+1}\tilde u=0, v^{4s+2}\tilde u^2-v^{4s+2}\tilde x^{s-1}\tilde y=0$. \par After dividing each equation by the highest possible power of $v$, we obtain the equations for the total space of the toric degeneration over ${\rm Spec}k[v]$ of the formal curve in $\A^3(k)$ with ring $R=k[[x,y,u]]/(y^2-x^3-u,u^2-x^{s-1}y)$ to the monomial (toric) curve $\tilde x=t^4, \tilde y=t^6, \tilde u=t^{2s+1}$: $$\tilde y^2-\tilde x^3-v^{2(s-6)+1}\tilde u=0, \ \tilde u^2-\tilde x^{s-1}\tilde y=0.$$
\end{subsection}

\end{section}
\begin{section}{\small{The basic example of approximation of a valuation by Abhyankar semivaluations}}\label{ex}

Let $k$ be an algebraically closed field and let $\nu$ be a rational valuation of rational rank one on the ring $R=k[[x,y]]$. It cannot be Abhyankar, and therefore the semigroup $\Gamma=\nu (R\setminus\{0\})$ is not finitely generated (see \cite[beginning of \S 7]{Te3}). Up to a change of variables we may assume that $\nu(x)<\nu(y)$ and that $\nu(y)$
is the smallest non zero element of $\Gamma$ which is not in $\Gamma_0=\N\nu(x)$. By \cite[Theorem 1.1 and Lemma 2.1]{C-V}, the semigroup $\Gamma$ is generated by positive rational numbers $\nu(x),\nu(y),(\gamma_i)_{i\geq 2}$ and we can normalize the valuation by setting $\gamma_0=\nu(x)=1$. Here, as in Example \ref{ex1}, we denote the smallest element of $\Gamma$ by $\gamma_0$ instead of $\gamma_1$ to underscore the special role played historically by the coordinate $x$.\par\noindent Moreover, denoting by $\Phi_i$ (resp. $\Gamma_i$) the subgroup (resp. subsemigroup)  of $\Q$ generated by $1,\nu(y),\nu(u_2),\ldots ,\nu(u_i)$, with $\Phi_0=\Z$, $\Phi_1$ (resp. $\Gamma_1$) generated by $\nu(x),\nu(y)$, and denoting by $n_i$ the index $[\Phi_i:\Phi_{i-1}]$, we have that $\gamma_{i+1}>n_i\gamma_i$ and as a consequence $n_i\gamma_i\in \Gamma_{i-1}$ and the relations between the $\gamma_i$ are generated by the expressions of this last result as $$n_i\gamma_i= t^{(i)}_0\gamma_0+t^{(i)}_1\gamma_1+\sum_{q=2}^{i-1}t^{(i)}_q\gamma_q\ \ {\rm with}\ t^{(i)}_q\in\N,\ \ 0\leq q\leq i-1,$$
where we may assume that $t^{(i)}_q<n_q$ for $q\geq 1$.\par\noindent
According to Theorem \ref{regular}, by a good choice of the representatives $\xi_i\in k[[x,y]]$ of the generators $\gamma_i$ the map 
\[\widehat{k[x,y,(u_i)_{i\geq 2}]}\to k[[x,y]],\ x\mapsto x, y\mapsto y, u_i\mapsto \xi_i,\] is well defined and surjective, and
has a kernel generated up to closure by $$\begin{array}{lr}y^{n_1}-x^{t^{(1)}_0}-g_1(x,y)-u_3,\\ \vdots \\ u_i^{n_i}-x^{t^{(i)}_0}y^{t^{(i)}_1}\prod_{2\leq q\leq i-1} u_q^{t^{(i)}_q}-g_i(x,y,(u_q)_{2\leq q\leq i})-u_{i+1}\\ \vdots \end{array}$$ with $g_i(x,y,(u_q)_{2\leq q\leq i})\in k[[(x,y,(u_q)_{2\leq q\leq i}]]$ satisfying the overweight condition and having each term of weight $<\gamma_{i+1}$.\par
Now if we keep only the first $i$ equations, and set $u_j=0$ for $j\geq i+1$, we obtain by elimination of the variables $u_q$ , $2\leq q\leq i$ the equation $p_i(x,y)=0$ of a plane branch $C_i$ in the formal affine plane. It has a unique valuation, which is Abhyankar and has a semigroup $\Gamma_i$ generated by $1,\nu(y),\nu(u_q)_{3\leq q\leq i}$. The unique valuation on the branch $C_i$ is an Abhyankar semivaluation on $k[[x,y]]$ and the corresponding semigroups $\Gamma_i=<1,\gamma_1,\gamma_2,\ldots ,\gamma_i>$ fill the semigroup $\Gamma$ as $i$ grows. The intersection numbers with $C_i$ at the origin of the elements $\xi_q\in k[[x,y]]$ are the $\gamma_q$, for $2\leq q\leq i$. Finally, by Chevalley's theorem, there exists a function $\beta\colon\N\to \N$ with $\beta(i)$ tending to infinity with $i$, such that $p_i(x,y)\in (x,y)^{\beta(i)}$.\par
To the branch $C_i$ is associated a rank two valuation $\nu_i$ on $k[[x,y]]$ defined as follows: each element $h\in k[[x,y]]$ can be written uniquely as $h=p_i^n\tilde h$ with $\tilde h$ not divisible by $p_i$ and then the restriction of $\tilde h$ to $C_i$ has a value $\gamma\in \Gamma_i$.\par\noindent The map $h\mapsto (n,\gamma)\in \Z\times \Phi_i$ 
is the rank two valuation $\nu_i$, which is a rational Abhyankar valuation dominating $k[[x,y]]$. The valuations $\nu_i$ approximate $\nu$ in the sense that given any $h\in k[[x,y]]$, for sufficiently large $i$ the element  $p_i(x,y)\in (x,y)^{\beta(i)}$ does not divide $h$ so that $\nu_i(h)\in \{0\}\times \Gamma_i$ is also $\nu(h)$ by what we have seen above. So we can indeed approximate the non-Abhyankar valuation $\nu$ of rank one and rational rank one by Abhyankar valuations of $k[[x,y]]$ of rank two and rational rank two, but the fundamental fact is the approximation of $\nu$ by Abhyankar semivaluations.
\begin{remark} If the algebraically closed field $k$ is of characteristic zero, each equation $p_i(x,y)=0$ has for roots Puiseux expansions with $i+1$ characteristic exponents $\beta_0,\ldots,\beta_i$ which coincide, up to renormalization, with Puiseux expansions up to and excluding the $(i+1)$-th Puiseux exponents of the roots of the equations $p_j=0$ for $j>i$ (see \cite{PP}). As $i$ tends to $\infty$ these Puiseux series converge to a series $y_\infty(x)\in k[[x^{\Q_{\geq 0}}]]$ which is not a root of any polynomial or power series in $x,y$ because the denominators of the exponents are not bounded. Given $h(x,y)\in k[[x,y]]$, associating to it the order in $x$ of $h(x,y_\infty(x))$ defines a valuation, which is the valuation $\nu$. For more details in a more general situation, see \cite{M}.\par\noindent The relationship between these Puiseux exponents $\beta_q$ and the generators of the semigroup is quite simple (see \cite[Chap.1]{Z2}): it is $\gamma_{q+1}-n_q\gamma_q=\beta_{q+1}-\beta_q$. These beautiful facts are unfortunately missing when one tries to understand valuations over fields of positive characteristic.  Even for branches the behaviour of solutions in generalized power series $y(x)\in k[[x^{\Q_{\geq 0}}]]$ of algebraic equations is much more complicated when $k$ is of positive characteristic (see \cite{K}) and as far as we know the relationship of these solutions with the semigroup of values has not been clarified (see \cite[Section 1]{Te4}).\end{remark}

\end{section}
\begin{section}{Approximating a valuation by Abhyankar semivaluations}
The purpose of this section is to show that the situation described in Example \ref{ex} is in fact quite general. We start from Theorem \ref{structure}. Let us choose a finite initial subset $B_0$ of $I$ which contains:\begin{itemize}
\item The set $J$ indices of the elements $(\xi_i)_{i\in J}$ minimally generating the maximal ideal of $R$;
\item A set of $r={\rm rat.rk.}\Phi$ indices $i_1,\ldots ,i_r$ such that the $(\gamma_{i_t})_{t=1,\ldots ,r}$ rationally generate the group $\Phi$;
\item The indices of the finite set of variables $u_i$ appearing in the equations $(F_q)_{q=1,\ldots ,s}$ of Theorem \ref{structure}.
\end{itemize}
Let $B$ be a finite initial subset of $I$ containing $B_0$ and consider in $R$ the ideal $K_B$ generated by the $(\xi_i)_{i\notin B}$ and the images by the map $\pi$ of the $F_\ell$ whose initial binomial involves only variables whose index is in $B$ and in which all the $u_i$ with $i\notin B$ have been set equal to $0$. Note that only the $F_\ell$ which use only variables in $B$ are mapped to zero in this operation. 
\begin{proposition}\label{quot}
\begin{enumerate}
\item The quotient $R/K_B$ is an overweight deformation of the prime binomial ideal generated by the binomials $u^{m^\ell}-\lambda_\ell u^{n^\ell}$ which are contained in $k[(u_i)_{i\in B}]$. In particular, $K_B$ is a prime ideal.
\item Given an integer $D$ there exists a finite initial set $B(D)$ such that $K_{B(D)}\subset m^{D+1}$.
\end{enumerate}
\end{proposition}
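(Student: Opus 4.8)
The plan is to realise $R/K_B$ as a quotient of the \emph{finite}-variable power series ring $k[[(u_i)_{i\in B}]]=\widehat{k[(u_i)_{i\in B}]}$ obtained from Theorem \ref{structure} by ``freezing'' the variables outside $B$, and then to read off from the explicit shape of the generators $F_i,F_q$ that its defining ideal is an overweight deformation of the binomial ideal cut out by those binomials $u^{m^\ell}-\lambda_\ell u^{n^\ell}$ supported on $B$. Part (2) is then obtained by pushing the valuative Chevalley theorem through this description.

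\emph{For (1).} From the commutative square relating $\pi$ to the quotient map $\widehat{k[(u_i)_{i\in I}]}\to k[[(u_i)_{i\in B}]]$ that kills the $u_j$ with $j\notin B$, one gets a continuous surjection of local $k$-algebras
$$\bar\pi\colon k[[(u_i)_{i\in B}]]\longrightarrow R/K_B,\qquad u_i\mapsto\xi_i\bmod K_B,$$
surjective because the $(\xi_i)_{i\in I}$ topologically generate $R$ and those with $i\notin B$ are killed in $K_B$. The main step is to identify $\ker\bar\pi$. Reducing the topological generators of $\ker\pi$ of Theorem \ref{structure} modulo $(u_j)_{j\notin B}$ and using that $k[[(u_i)_{i\in B}]]$ is noetherian (so the closure of an ideal is the ideal), one finds that $\ker\bar\pi$ is generated by the reductions $\bar F_i,\bar F_q$ of all those generators together with the $\bar F_\ell$ for $\ell\in L(B):=\{\ell\in L\mid u^{m^\ell}-\lambda_\ell u^{n^\ell}\in k[(u_i)_{i\in B}]\}$, a \emph{finite} set by combinatorial finiteness of $\Gamma$. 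The finitely many $F_q$ involve only variables of $B_0\subset B$, so $\bar F_q=F_q$ and they are already among the $\bar F_\ell$, $\ell\in L(B)$; and each $F_i$ whose variables and initial binomial all lie in $B$ contributes (after absorbing its $-u_i$ tail into the ideal of higher-weight generators) an overweight deformation of a binomial indexed by $L(B)$. The delicate case is an $F_i$ whose initial binomial involves some $u_j$ with $j\notin B$, necessarily of index $<i$: here I would eliminate those variables one at a time by substituting for $u_j$ the expression $u^{m^j}-\lambda_j u^{n^j}-g_j((u_q)_{q<j})$ read off from $F_j$ (Corollary \ref{substitute}), exactly as in the proof of Theorem \ref{regular}; since the index set is well ordered and each substitution strictly lowers the largest forbidden index occurring, the process terminates, and — using the primality of $\ker\pi$ and, crucially, that $B$ is \emph{initial} (which keeps the relations needed for the elimination available) — one is left with an element of $\ker\pi$ involving only variables in $B$, congruent to $\bar F_i$ modulo $(\bar F_\ell)_{\ell\in L(B)}$. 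I expect this elimination-and-termination argument, and checking that it preserves the overweight/standard-basis shape, to be the main obstacle.

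Granting $\ker\bar\pi=(\bar F_\ell)_{\ell\in L(B)}$, the overweight-deformation axioms of Definition \ref{overweight} are checked thus: the vectors $m^\ell-n^\ell$, $\ell\in L(B)$, generate the full lattice of relations among $(\gamma_i)_{i\in B}$ (a lattice computation using the triangular system of Theorem \ref{structure} and the initiality of $B$), so $(u^{m^\ell}-\lambda_\ell u^{n^\ell})_{\ell\in L(B)}$ is the toric ideal of the affine toric variety of the finitely generated semigroup $\langle\gamma_i\mid i\in B\rangle$, which is prime by Proposition \ref{smgrp}; the $\bar F_\ell$ have these binomials as $w$-initial forms and, by Theorem \ref{gener} (faithful flatness of the toric degeneration, now in finitely many variables), form a standard basis of $\ker\bar\pi$; and the dimension is $\mathrm{rat.rk.}\langle\gamma_i\mid i\in B\rangle=\mathrm{rat.rk.}\,\Phi$ since $B\supset B_0$ contains $r$ indices whose $\gamma$'s rationally generate $\Phi$. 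In particular $R/K_B$ is an overweight deformation ring, hence a domain, so $K_B$ is prime.

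\emph{For (2).} Fix $D$. By the valuative Chevalley theorem (Theorem \ref{Chevalley}) there is a finite initial subset $C(D)$ of $(\gamma_i)_{i\in I}$ containing $B_0$ with $\gamma_i\notin C(D)\Rightarrow\xi_i\in m^{D+1}$; let $B(D)$ be the initial closure of $B_0\cup C(D)$, finite by Proposition \ref{initial}. Then every generator of $K_{B(D)}$ lies in $m^{D+1}$: the $\xi_i$ with $i\notin B(D)$ do so directly; and for $\ell\in L(B(D))$ we have $\pi(F_\ell)=0$, hence $\pi(\bar F_\ell)=-\pi(F_\ell-\bar F_\ell)$, where $F_\ell-\bar F_\ell$ is the transfinite sum of those monomials of $F_\ell$ divisible by some $u_j$ with $j\notin B(D)$. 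Each such monomial maps under $\pi$ into $(\xi_j)\subset m^{D+1}$, and by Corollary \ref{chev} together with the closedness of $m^{D+1}$ in the complete noetherian ring $R$ this transfinite sum converges inside $m^{D+1}$; thus $\pi(\bar F_\ell)\in m^{D+1}$, and so $K_{B(D)}\subset m^{D+1}$.
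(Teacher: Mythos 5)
Your part (1) has a genuine gap. The argument stands or falls on two claims that you never establish: (a) that $\ker\bar\pi$ is generated by the truncated series $\bar F_\ell$, $\ell\in L(B)$ --- you explicitly defer the elimination of the variables $u_j$, $j\notin B$, appearing in initial binomials and call it ``the main obstacle'', so the central step is missing; and (b) that the $\bar F_\ell$ form a standard basis, i.e.\ that the $w$-initial ideal of the ideal they generate in $k[[(u_i)_{i\in B}]]$ is exactly the binomial ideal. Your justification of (b) by ``Theorem \ref{gener}, now in finitely many variables'' is not a valid application: Theorem \ref{gener} is a statement about the kernel of $\pi$ on the full ring $\widehat{k[(u_i)_{i\in I}]}$, and nothing in it says that after setting the $u_j$, $j\notin B$, equal to zero the surviving binomials still generate the ideal of initial forms. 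Cancellations among the initial binomials of the $\bar F_\ell$ could a priori produce elements of the truncated ideal with new, smaller initial forms; excluding this is precisely the difference between an overweight \emph{unfolding} and an overweight \emph{deformation} in the sense of Definition \ref{overweight}, and it is the whole content of what has to be proved. (Likewise, your parenthetical ``lattice computation'' showing that the binomials supported on $B$ cut out the full toric ideal of $\langle\gamma_i\rangle_{i\in B}$ is asserted, not carried out.)

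The paper sidesteps both difficulties with a dimension and flatness argument instead of an explicit computation of $\ker\bar\pi$: by construction $R/K_B$ is a quotient of $k[[(u_i)_{i\in B}]]$ by an overweight \emph{unfolding} of the binomials supported on $B$; the quotient by that binomial ideal is the semigroup algebra of $\langle\gamma_i\rangle_{i\in B}$, a domain of dimension $r$ (Proposition \ref{smgrp}); since $B\supset B_0$ contains indices $i_1,\ldots,i_r$ with rationally independent values, a nonzero series in $u_{i_1},\ldots,u_{i_r}$ has a \emph{monomial} initial form and cannot lie in the unfolded ideal, so $k[[u_{i_1},\ldots,u_{i_r}]]$ injects into $R/K_B$ and $\dim R/K_B\geq r$ (Lemma \ref{dim}); the faithfully flat specialization of $R/K_B$ onto $k[[(u_i)_{i\in B}]]/\mathrm{in}_wF$ then forces $\mathrm{in}_wF$ to coincide with the prime binomial ideal (Lemma \ref{flat}), which is exactly the standard-basis statement you were missing, and primality of $K_B$ follows at once. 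Your part (2) is essentially the paper's argument and is correct; just note that the detour through transfinite sums, Corollary \ref{chev} and closedness of $m^{D+1}$ is unnecessary, because by Proposition \ref{finite} and Theorem \ref{structure} each $F_\ell$ involves only finitely many variables, so $F_\ell-\bar F_\ell$ visibly lies in the ideal generated by the finitely many $\xi_j$ with $j\notin B(D)$ that occur in it, all of which are in $m^{D+1}$.
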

\begin{proof}
The following result is classical, see \cite[Lemma 1-90]{G-L-S} for the complex analytic avatar:

 \begin{lemma}\label{dim} Let $R_0\subset R$ be an injection of complete local noetherian $k$-algebras with residue field $k$. Then we have ${\rm dim}R\geq {\rm dim}R_0$.
 \end{lemma}
 By the formal normalization lemma, there exist elements $x_1,\ldots ,x_e$ in $R\setminus m_0R$ such that the induced map of $k$-algebras $k[[X_1,\ldots ,X_e]] \to R/m_0R$ sending $X_i$ to the image of $x_i$ is injective and finite.\par\noindent By the Weierstrass preparation theorem in the form "quasi-finite implies finite", the map $R_0[[X_1,\ldots ,X_e]]\to R$ sending $X_i$ to $x_i$ is also injective and finite, so that ${\rm dim} R={\rm dim} R_0+e\geq {\rm dim} R_0$. \par\medskip\noindent

 \textit{In the following lemma, an} overweight unfolding \textit{is a deformations of equations which adds only terms of higher weight, without any condition on the initial forms of the elements of the ideal generated by the deformed equations.}
 \begin{lemma}\label{flat} Let $F_0=(u^{m^\ell}-\lambda_\ell u^{n^\ell})_{\ell \in L}$ be a prime binomial ideal in $k[[u_1,\ldots ,u_N]]$ corresponding to a system of generators of the relations between the generators of a finitely generated semigroup $\Gamma=\langle \gamma_1,\ldots ,\gamma_N\rangle$ generating a group of rational rank $r$. Let $w$ be the weight  on $k[[u_1,\ldots ,u_N]]$ defined by giving $u_i$ the weight $\gamma_i$.\par Let $(F_\ell=u^{m^\ell}-\lambda_\ell u^{n^\ell} +\sum_{w(u^p)>w(u^{m^\ell})}c^{(\ell)}_pu^p)_{\ell\in L}$ be an overweight unfolding of the binomials. Let $F$ be the ideal of $k[[u_1,\ldots ,u_N]]$ generated by the $(F_\ell)_{\ell\in L}$ and $R=k[[u_1,\ldots ,u_N]]/F$.\par\noindent If ${\rm dim}R\geq r$, then ${\rm dim}R= r$ and the unfolding is an overweight deformation of the binomial ideal.
 \end{lemma}
 Exactly as in the proof of Theorem 3.3 of \cite{Te3}, we can define an order function with values in $\Gamma$ on $R$ by associating to a non zero element of $R$ the highest weight of its preimages in $k[[u_1,\ldots ,u_N]]$. Then we have, as in \cite[subsection 2.3]{Te1},
 a faithfully flat specialization of $R$ to $k[[u_1,\ldots ,u_N]]/{\rm in}_wF$, where ${\rm in}_wF$ is the $w$-initial ideal of the ideal $F$, and ${\rm dim}(k[[u_1,\ldots ,u_N]]/{\rm in}_wF)={\rm dim}R$. On the other hand we have the exact sequence
 \[(0)\rightarrow {\rm in}_w F/F_0\rightarrow k[[u_1,\ldots ,u_N]]/B\rightarrow k[[u_1,\ldots ,u_N]]/{\rm in}_wF\rightarrow(0),\]
 and since the domain $ k[[u_1,\ldots ,u_N]]/F_0$ is of dimension $r$ (see \cite[Proposition 3.7]{Te1}), the inequality ${\rm dim}R\geq r$ is equivalent to the inequality \[{\rm dim}k[[u_1,\ldots ,u_N]]/{\rm in}_wF\geq {\rm dim}k[u_1,\ldots ,u_N]/F_0\] which implies the equality of dimensions and, since $F_0$ is prime, the equality ${\rm in}_wF=F_0$ which means that we have an overweight deformation.

  By construction, the ring $R/K_B$ is a quotient of $k[[(u_i)_{i\in B}]]$ by the ideal generated by the $ F_\ell\vert B$ which are the $F_\ell$ whose initial forms are in $k[(u_i)_{i\in B}]$ and where we have set $u_i=0$ if $i\notin B$. So it is an overweight unfolding of the ideal generated by its initial binomials. The quotient of $k[(u_i)_{i\in B}]$ by this binomial ideal is isomorphic by Proposition \ref{smgrp} to the semigroup algebra $k[t^{\Gamma_i}]$ where $\Gamma_i$ is the semigroup generated by the $(\gamma_i)_{i\in B}$ and so has dimension $r$.\par  By Lemmas \ref{dim} and \ref{flat}, to prove the first assertion of the proposition it suffices to prove that $R/K_B$ contains the power series ring $k[[u_{i_1},\ldots ,u_{i_r}]]$. But $(i_1,\ldots ,i_r)\subset B$ by our choice of $B_0$ and a series $h(u_{i_1},\ldots ,u_{i_r}) \in k[(u_i)_{i\in I}]$ cannot belong to the ideal of the $F_\ell$ used to define $K_B$: since the weight of the variables are rationally independant the initial form of $h(u_{i_1},\ldots ,u_{i_r})$ is a monomial and the initial ideal of the $F_\ell$ contains no monomial. \par\noindent Thus, the map $k[[u_{i_1},\ldots ,u_{i_r}]]\to R/K_B$ is injective so that ${\rm dim}R/K_B\geq r$ by Lemma \ref{dim} and we have the result by Lemma \ref{flat},\par\noindent
The second part of the theorem follows from the fact that by Theorem \ref{Chevalley} we can choose $B=B(D)$ so that the $(\xi_i)_{i\notin B}$ are all in $m^{D+1}$ and by construction the image in $R$ of $ F_\ell\vert B\in k[[(u_i)_{i\in B}]]$ differs from zero by a series all of whose terms are in the ideal generated by the $(\xi_i)_{i\notin B}$.
   \end{proof} 
To summarize, the ring  $R/K_B$  is indeed an overweight deformation of the prime binomial generated by the $u^{m^\ell}-\lambda_\ell u^{n^\ell}$ which are in $k[[(u_i)_{i\in B}]]$ in the sense of  subsection \ref{ow} and \cite[Section 3]{Te3}, and the induced valuation has to be Abhyankar, of rational rank $r={\rm rat.rk.}\nu={\rm dim}R/K_B$. The value semigroup of this Abhyankar valuation, which we shall denote by $\nu_B$, is the subsemigroup of $\Gamma$ generated by the $(\gamma_i)_{i\in B}$.\par\noindent
  We now study some properties of the collection of ideals $K_{B}$.\par\medskip
 Notice that given an inclusion $B\subset B'$, the set of generators of the ideal $K_{B'}$ contains less of the variables $u_i$, but in general more of the images of the series $F_\ell$ because the variables with indices in $B'\setminus B$ will in general appear in the initial forms of some new $F_\ell$'s. So we have:
 \begin{remark}\label{order}In general when  $B\subset B'$ there is no inclusion between the ideals $K_B$  and $K_{B'}$. However, it follows from Theorem \ref{Chevalley} that if for some integer $D$ we have $K_B\subset m^{D+1}$ and $B\subset B'$, then $K_{B'}\subset m^{D+1}$. The $m$-adic order of the ideals $K_B$ grows with $B$ and tends to infinity.
 \end{remark}
 \begin{corollary}\label{vanish} We have $\bigcap_{B\supset B_0}K_{B}=(0)$.
 \end{corollary}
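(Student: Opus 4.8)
The plan is to deduce this directly from the growth of the $m$-adic orders of the ideals $K_B$ that was established in Proposition~\ref{quot}(2) and Remark~\ref{order}, combined with the Krull intersection theorem. This is the exact global analogue of the phenomenon seen concretely in Example~\ref{ex}, where $p_i(x,y)\in(x,y)^{\beta(i)}$ with $\beta(i)\to\infty$, which forces the intersection of the corresponding principal ideals to be $(0)$.

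Concretely, I would fix an element $x\in\bigcap_{B\supset B_0}K_B$ together with an arbitrary integer $D\geq 0$. By Proposition~\ref{quot}(2) there is a finite initial set $B(D)$ with $K_{B(D)}\subset m^{D+1}$. Replacing $B(D)$ by the initial closure of $B(D)\cup B_0$ — which is again finite by Proposition~\ref{initial} and is initial — and using the monotonicity statement of Remark~\ref{order} (enlarging an initial set inside which $K_B\subset m^{D+1}$ keeps the new $K_{B'}$ inside $m^{D+1}$), I may assume $B(D)\supset B_0$. Then, since $x$ lies in every $K_B$ with $B\supset B_0$, in particular $x\in K_{B(D)}\subset m^{D+1}$.

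As $D$ was arbitrary, this shows $x\in\bigcap_{D\geq 0}m^{D+1}$, which is $(0)$ because $R$ is a noetherian (indeed complete) local domain, by the Krull intersection theorem; hence $\bigcap_{B\supset B_0}K_B=(0)$. I do not expect any genuine obstacle here: all the real work has already been carried out in Theorem~\ref{Chevalley} (the valuative Chevalley theorem, which controls which of the chosen representatives $\xi_i$ fall into a prescribed power of $m$) and in Proposition~\ref{quot}. The only point deserving a word of care is that the finite initial set $B(D)$ produced by Proposition~\ref{quot}(2) can be taken to contain the fixed base set $B_0$, and this is immediate from taking initial closures together with Remark~\ref{order}.
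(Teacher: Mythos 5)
Your proposal is correct and is essentially the paper's own argument: the paper deduces the corollary from noetherianity of $R$ (Krull intersection) together with Proposition~\ref{quot}(2), exactly as you do, with your extra remarks about enlarging $B(D)$ via initial closure and Remark~\ref{order} being a harmless elaboration of details the paper leaves implicit.
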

 \begin{proof}This follows from the fact that $R$ is noetherian and the second part of the proposition. \end{proof}
 \begin{corollary}\label{jacobian}
 Let $J\subset R$ be the jacobian ideal, defining the singular locus of the formal germ associated to $R$.  There exists a finite initial set $B_J$ containing $B_0$ and such that if an initial set $B$ contains $B_J$ we have that $J\not\subset K_{B}$ so that the localization $R_{K_{B}}$ is a regular ring. In particular, in the sequence of Corollary \ref{approx}, we have that $J\not\subset K_{B_t}$ and $R_{K_{B_t}}$ is a regular ring for large enough $t$.
 \end{corollary}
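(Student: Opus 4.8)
The plan is to combine Corollary \ref{vanish} (or rather the $m$-adic statement that underlies it, Proposition \ref{quot}(2)) with the jacobian criterion for regularity. The first step is to observe that $J\neq (0)$. Indeed, since $R$ is a domain its generic point is the field ${\rm Frac}(R)=R_{(0)}$, which is a regular local ring, so the generic point does not lie in the singular locus $V(J)$; hence $V(J)\neq {\rm Spec}\,R$ and $J\neq(0)$. (Alternatively: $R$ is a domain whose residue field $k$ is algebraically closed, hence perfect, so ${\rm Frac}(R)$ is separably generated over $k$, $\mathrm{Spec}\,R$ is generically smooth, and the ideal generated by the appropriate minors of the jacobian of generators of $P$ is a nonzero ideal cutting out the singular locus.)

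Next I would pass from the non-monotone family $B\mapsto K_{B}$ to the monotone numerical invariant $\ord_m(K_{B})$. Pick any nonzero $f\in J$. Since $R$ is a complete noetherian local domain, $\bigcap_{n}m^{n}=(0)$, so $f$ has a well-defined finite $m$-adic order $e$, i.e. $f\in m^{e}\setminus m^{e+1}$. By Proposition \ref{quot}(2) there is a finite initial set with $K_{\bullet}\subset m^{e+1}$; after replacing it by its initial closure (finite by Proposition \ref{initial}) and enlarging it to contain $B_{0}$, call it $B_{J}$, so that $K_{B_{J}}\subset m^{e+1}$ and $B_{0}\subset B_{J}$. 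Now let $B$ be any initial set with $B_{J}\subset B$. By Remark \ref{order}, from $K_{B_{J}}\subset m^{e+1}$ and $B_{J}\subset B$ we get $K_{B}\subset m^{e+1}$; since $f\notin m^{e+1}$ this forces $f\notin K_{B}$, hence $J\not\subset K_{B}$.

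It remains to deduce regularity of $R_{K_{B}}$ and to treat the sequence of Corollary \ref{approx}. By Proposition \ref{quot}(1) the ideal $K_{B}$ is prime, hence it is the generic point of the integral closed subscheme $V(K_{B})\subset {\rm Spec}\,R$; since $J\not\subset K_{B}$, this point does not lie in $V(J)={\rm Sing}({\rm Spec}\,R)$, so it is a regular point of ${\rm Spec}\,R$, i.e. $R_{K_{B}}$ is regular. Finally, for the nested sequence $B_{0}\subset B_{1}\subset\cdots$ of Corollary \ref{approx} with $\bigcup_{t}B_{t}=I$: the set $B_{J}$ is finite and contained in $I$, so $B_{J}\subset B_{t}$ for all sufficiently large $t$, and the preceding argument applied with $B=B_{t}$ gives $J\not\subset K_{B_{t}}$ and $R_{K_{B_{t}}}$ regular for $t\gg 0$.

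The \textbf{main obstacle}, and the only subtle point, is precisely that $B\mapsto K_{B}$ is not monotone (Remark \ref{order}): one cannot propagate the non-containment $J\not\subset K_{B}$ from a single $B$ to all larger $B'$ via an inclusion of ideals. The device that rescues the argument is to work instead with the $m$-adic order $\ord_m(K_{B})$, which \emph{is} monotone in $B$ and tends to $\infty$ (Proposition \ref{quot}(2) together with Remark \ref{order}); one then only needs to push $\ord_m(K_{B})$ past the finite order of one chosen nonzero element of $J$, and the jacobian criterion converts "avoids $J$" into "regular localization''.
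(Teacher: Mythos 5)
Your proof is correct and follows essentially the same route as the paper: both arguments convert the non-monotone family $B\mapsto K_B$ into the monotone $m$-adic statement via Proposition \ref{quot}(2) and Remark \ref{order}, choose $B_J=B(D)$ with $D$ past the $m$-adic order of $J$ (the paper uses the order of the ideal $J$, you use the order of a single nonzero $f\in J$, which is the same thing), and conclude $J\not\subset K_B$ for all initial $B\supset B_J$, whence regularity of $R_{K_B}$ since the prime $K_B$ avoids the ideal defining the singular locus. Your explicit treatment of the tail of the sequence $(B_t)$ and of why $J\neq(0)$ just spells out what the paper leaves implicit.
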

 \begin{proof} Since $R$ is a domain and a quotient of a power series ring, we know that the ideal $J$ is not zero. Since $R$ is noetherian, we know that $\bigcap_{j=1}^\infty m^j=(0)$. Let $D$ be the largest integer $E$ such that $J\subset m^E$ in $R$. If we take $B_J=B(D)$ as in the proposition, we have that $K_{B}\subset m^{D+1}$. Thus, if $B_J\subset B$ it is impossible for $J$ to be contained in $K_{B}$.\end{proof}
 
\begin{theorem}\label{AbhApprox} The Abhyankar semivaluations $\nu_B$ of $R$ are better and better approximations of the valuation $\nu$ as the finite initial sets $B$ grow in the sense that:
\begin{enumerate} 
\item For any $x\in R\setminus\{0\}$, let $D$ be its $m$-adic order. By Theorem \ref{Chevalley} there are finite initial sets $B$ containing $B_0$ such that $K_B\subset m^{D+1}$. Then for any finite initial set $B'\supset B$ we have $x\notin K_{B'}$ and for any such $B'$ we have the equality $\nu(x)=\nu_{B'}(x)$. In particular, for any $x\in R\setminus\{0\}$ there is an index $t(x)$ in the sequence of Corollary \ref{approx} such that for $t\geq t_0(x)$ we have $x\notin K_{B_t}$ and $\nu(x)=\nu_{B_t}(x)$.
\item The union of the finitely generated semigroups $\Gamma_B=\langle(\gamma_i)_{i\in B}\rangle$ is equal to $\Gamma$, and in particular the nested union of the $\Gamma_{B_t}$ corresponding to the sequence of Corollary \ref{approx} is equal to $\Gamma$.
\end{enumerate}
\end{theorem}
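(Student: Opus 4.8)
The plan is to view each Abhyankar semivaluation $\nu_B$ of $R$ as the composite of the quotient map $\psi_B\colon R\to R/K_B$ with the valuation (which I still write $\nu_B$) carried by $R/K_B$ as an overweight deformation of the toric ideal of $\Gamma_B$ by Proposition~\ref{quot}, and to treat the two assertions in turn. Assertion (2) is immediate: any element of $\Gamma$ is a finite $\N$-combination of the $\gamma_i$, hence lies in $\Gamma_B$ as soon as $B$ contains the finitely many indices involved, and by Proposition~\ref{initial} such a $B$ may be taken finite and initial; together with Corollary~\ref{approx} this gives moreover $\bigcup_t\Gamma_{B_t}=\langle\bigcup_t\{\gamma_i\mid i\in B_t\}\rangle=\Gamma$. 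So I focus on (1).

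Fix $x\in R\setminus\{0\}$ with $m$-adic order $D$. First, using the valuative Cohen theorem, I choose a representative $h\in\widehat{k[(u_i)_{i\in I}]}$ of $x$ which is weight-optimal, i.e. $w(h)=\nu(x)=:\phi$; by the criterion recalled after Theorem~\ref{cohen} this forces ${\rm in}_wh\notin\ker({\rm gr}_w\pi)$, and by combinatorial finiteness $h_\phi:={\rm in}_wh$ is a nonzero polynomial involving only finitely many of the variables. Then I take $B$ to be a finite initial subset of $I$ that contains $B_0$, satisfies $K_B\subseteq m^{D+1}$ (available from Theorem~\ref{Chevalley}, cf. Proposition~\ref{quot}(2)), and in addition contains the indices of all variables occurring in $h_\phi$; enlarging $B$ this way keeps $K_B\subseteq m^{D+1}$ by Remark~\ref{order}.

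Now let $B'\supseteq B$ be any finite initial set. Since $K_{B'}\subseteq m^{D+1}$ by Remark~\ref{order} while $x\notin m^{D+1}$, we get $x\notin K_{B'}$, so $\psi_{B'}(x)\ne 0$ and $\nu_{B'}(x):=\nu_{B'}(\psi_{B'}(x))$ is well defined in $\Gamma_{B'}$. Because $\xi_i\in K_{B'}$ for every $i\notin B'$, the composite $\psi_{B'}\circ\pi$ factors through the projection $\widehat{k[(u_i)_{i\in I}]}\to k[[(u_i)_{i\in B'}]]$ which sets all out-of-$B'$ variables to $0$; hence the image $\bar h$ of $h$ under that projection is a representative of $\psi_{B'}(x)$ in $k[[(u_i)_{i\in B'}]]$. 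As $h_\phi$ only involves variables with index in $B'$ it is unchanged by the projection, whereas the other terms of $h$ keep weight $>\phi$; therefore ${\rm in}_w\bar h=h_\phi$. By the overweight-deformation structure of $R/K_{B'}$ (Proposition~\ref{quot}(1)), the $w$-initial ideal of the ideal presenting it is the binomial ideal generated by those $u^{m^\ell}-\lambda_\ell u^{n^\ell}$ lying in $k[(u_i)_{i\in B'}]$, which is contained in $\ker({\rm gr}_w\pi)$; since $h_\phi\notin\ker({\rm gr}_w\pi)$, it does not lie in that initial ideal. By the standard-basis description of the valuation of an overweight deformation, $\bar h$ is then a weight-optimal representative of $\psi_{B'}(x)$, so $\nu_{B'}(x)=w(\bar h)=\phi=\nu(x)$. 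The "in particular" assertions follow at once because $\bigcup_tB_t=I$ by Corollary~\ref{approx}.

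The two ingredients that carry real weight, rather than being bookkeeping, are the existence of a weight-optimal representative of $x$ (equivalently, that $\nu(x)$ is attained as the weight of some representative in $\widehat{k[(u_i)_{i\in I}]}$), which is part of the valuative Cohen package and rests on spherical completeness, and the identification of the $w$-initial ideal of the presentation of $R/K_{B'}$ with the toric ideal of $\Gamma_{B'}$, i.e. Proposition~\ref{quot}(1) together with Proposition~\ref{smgrp} — the step where the "abyssal" role of the equations $F_i$ is exactly what makes the truncated ideal prime and its initial ideal binomial. I expect the only genuine subtlety to be the bookkeeping about which variables must be absorbed into $B$ so that the leading form $h_\phi$ of $x$ survives the coordinate projection; once that is arranged, the equality of values drops out of tracking ${\rm in}_\nu x$ through $\psi_{B'}$, and, notably, the $m$-adic bound from Theorem~\ref{Chevalley} is used only to secure $x\notin K_{B'}$, with no case analysis on cancellation of leading terms being needed.
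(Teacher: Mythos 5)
Your argument is correct in substance and uses the same toolkit as the paper (Proposition \ref{quot}, Remark \ref{order}, Corollary \ref{approx}, and the criterion that a representative is weight-optimal exactly when its initial form avoids the relevant binomial ideal), but it runs in the opposite direction from the paper's proof. The paper takes a \emph{maximal-weight representative of $\bar x$ downstairs} in $k[[(u_i)_{i\in B}]]$ and derives a contradiction with maximality if its weight differed from the $\nu$-value of its image under $\pi$; you instead take a \emph{weight-optimal representative $h$ of $x$ upstairs}, project it to $k[[(u_i)_{i\in B'}]]$, and check that its initial form survives and stays outside the truncated binomial ideal. Your route has a real advantage: the paper's computation only identifies $\nu_B(\bar x)$ with $\nu(\pi(h(\bar x)))$, where $\pi(h(\bar x))$ is \emph{some} lift of $\bar x$, congruent to $x$ modulo $K_B$ but not $x$ itself, and the passage from that to $\nu(x)=\nu_B(x)$ is left implicit (indeed the paper's reduction ``it suffices to show that $x\notin K_B$ implies $\nu(x)=\nu_B(x)$'' is, taken literally, too strong: adding to $x$ an element $\xi_i\in K_B$ of small value changes $\nu(x)$ without changing $\nu_B(\bar x)$). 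Your requirement that $B$ contain the support of $h_\phi={\rm in}_w h$ is exactly what closes that loophole, since then the one inequality $\nu_{B'}(\bar x)\geq w(\bar h)=\nu(x)$ is complemented by the initial-form criterion giving equality.

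The price is a quantifier: you prove the conclusion for all finite initial $B'$ containing one \emph{specific} $B$ (chosen to contain $B_0$, the support of $h_\phi$, and to satisfy $K_B\subset m^{D+1}$), whereas the statement as written asserts it for any $B$ with $K_B\subset m^{D+1}$ and any finite initial $B'\supset B$; your version does yield the ``in particular'' clause and the intended approximation property along the sequence of Corollary \ref{approx}, and, given the issue just described, the extra condition on $B$ is a reasonable (arguably necessary) strengthening of the hypothesis rather than a defect of your argument. Two small bookkeeping points you should make explicit: the enlarged $B$ stays finite and initial because the initial closure of a finite set is finite (Proposition \ref{initial}), and the factorization of $\psi_{B'}\circ\pi$ through the projection killing the variables $u_i$, $i\notin B'$, uses that the image of the closed ideal they generate lands in $K_{B'}$ (a transfinite sum of elements of $K_{B'}$ converging in the noetherian complete ring $R$, as in the proof of Proposition \ref{quot}(2)). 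Part (2) is handled as in the paper, where it is immediate.
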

\begin{proof} In view of Corollaries \ref{approx} and \ref{vanish} and of Remark \ref{order} it suffices to show that if $x\in R\setminus K_B$ its $\nu$-value is equal to its $\nu_B$-value. Consider the following diagram, which results from our constructions. 
\[\xymatrix{&k[[(u_i)_{i\in B}]]\ar[drr]_{\pi_B} \ar @{^{(}->}[r]^{\iota_B}&\widehat{k[(u_i)_{i\in I}]} \ar[r]^\pi&  R\ar[d]\\
              & && R/K_B}\]
  The kernel of $\pi_B$ is generated by the $F_\ell\vert B$ as we have seen above. By definition (see \cite[Proposition 3.3]{Te3}) of the Abhyankar valuation on $R/K_B$, the $\nu_B$-value of an element $\overline x\in R/K_B$ is the maximum weight of its representatives in $k[[(u_i)_{i\in B}]]$. Let $h(\overline x)$ be such a representative. If its weight is not the $\nu$-value of its image in $R$ by the map $\pi$, its initial form must be in the binomial ideal generated by the initial forms of the $F_\ell$ (see the sentence after the statement of Theorem \ref{cohen} or \cite[Proposition 3.6]{Te3}). But then it is in the ideal generated by the initial forms of the $F_\ell\vert B$ and we have a contradiction.                  
  \end{proof}
\begin{remark}Just as in Example \ref{ex}, using Corollary \ref{jacobian}, we can, for large enough $B$, compose the valuation $\nu_B$ with the $K_BR_{K_B}$-adic valuation of the regular local ring $R_{K_B}$, to obtain a valuation $\tilde\nu_B$ on $R$ of rational rank $r+1\leq {\rm dim}R$. These valuations are Abhyankar if  $r+1={\rm dim}R$ and approximate $\nu$ in the sense that given $x\in R$, for large enough $B$ we have $\tilde\nu_B (x)=\nu(x)$ since $x\notin K_B$.
\end{remark}
\end{section}

\printindex
\end{document}